\theoremstyle{plain}
\newtheorem{theorem}{Theorem}[section]
\newtheorem{lemma}[theorem]{Lemma}
\newtheorem{proposition}[theorem]{Proposition}
\newtheorem{corollary}[theorem]{Corollary}
\theoremstyle{definition}
\newtheorem{definition}[theorem]{Definition}
\theoremstyle{remark}
\newtheorem{remark}[theorem]{Remark}
\newtheorem{problem}[theorem]{Problem}
\DeclareMathOperator{\inter}{{\rm int}}
\DeclareMathOperator{\cl}{{\rm cl}}
\DeclareMathOperator{\id}{{\rm id}}
\begin{document}
\title{Characterizations of $\mathbb{N}$-compactness and realcompactness via ultrafilters in the absence of the axiom of choice}
\author{ \textbf{AliReza Olfati and Eliza Wajch}\\
\\
Department of Mathematics, Faculty of Basic Sciences,\\ 
Yasouj University, Daneshjoo St.,  Yasouj 75918-74934, Iran\\
E-mail: alireza.olfati@yu.ac.ir\\
School of Mathematics\\
Institute for Research in Fundamental Sciences (IPM)\\
P.O. Box:19395-5746, Tehran\\
\\
Institute of Mathematics,
Faculty of Exact and Natural Sciences,\\
University of Siedlce,\\
ul. 3 Maja 54, 08-110 Siedlce, Poland\\
E-mail: eliza.wajch@gmail.com}
\maketitle
\begin{abstract}
This article concerns the Herrlich-Chew theorem stating that a Hausdorff zero-dimensional space is $\mathbb{N}$-compact if and only if every clopen ultrafilter with the countable intersection property in this space is fixed. It also concerns Hewitt's theorem stating that a Tychonoff space is realcompact if and only if every $z$-ultrafilter with the countable intersection property in this space is fixed. The axiom of choice was involved in the original proofs of these theorems. The aim of this article is to show that the Herrlich-Chew theorem is valid in $\mathbf{ZF}$, but it is an open problem if Hewitt's theorem can be false in a model of $\mathbf{ZF}$. It is proved that Hewitt's theorem is true in every model of $\mathbf{ZF}$ in which the countable axiom of multiple choice is satisfied. A modification of Hewitt's theorem is given and proved true in $\mathbf{ZF}$. Several applications of the results obtained are shown.

\noindent \emph{Mathematics Subject Classification}: Primary 54D60, 54D35; Secondary 54A35, 03E25\\
 
\noindent\emph{Keywords and phrases}: $\mathbb{N}$-compactness, realcompactness, clopen sets, zero-sets, ultrafilters
\end{abstract}

\thanks{The first author declares that the research funding for this project was provided by the Institute for Research in Fundamental Sciences (IPM) Grants Committee (Award No. 1403030024)}

\maketitle

\section{Introduction}
\label{intro}

The standard Zermelo-Fraenkel system of axioms $\mathbf{ZF}$ is the set-theoretic framework for this article. This means that we conduct research without assuming that the axiom of choice, usually denoted by $\mathbf{AC}$, is true. The system $\mathbf{ZF+AC}$ is commonly denoted by $\mathbf{ZFC}$. 

In what follows, our notation and terminology, if not introduced here, is standard. The set $\omega$ of all finite ordinal numbers (of von Neumann) is identified with the set of all non-negative integers of the real line $\mathbb{R}$. The empty set $\emptyset$ as the smallest ordinal number is denoted by $0$. For every ordinal number $\alpha$, $\alpha+1=\alpha\cup\{\alpha\}$. For our convenience, we put $\mathbb{N}=\omega\setminus\{0\}$ and call every element of $\mathbb{N}$ a \emph{natural number} or a \emph{positive integer}.

For a set $X$, $\mathcal{P}(X)$ denotes the power set of $X$, $[X]^{<\omega}$ is the family of all finite subsets of $X$, $[X]^{\leq\omega}$ is the family of all countable subsets of $X$. If $\mathcal{A}\subseteq\mathcal{P}(X)$, then
\[\mathcal{A}_{\delta}:=\{\bigcap\mathcal{U}: \mathcal{U}\in [\mathcal{A}]^{\leq\omega}\setminus\{\emptyset\}\}.\]

Given a family $\mathcal{X}=\{X_t: t\in T\}$ of sets and an element $t_0\in T$, we denote by $\pi_{t_0}$ the projection of the product $\prod_{t\in T}X_t$ into $X_{t_0}$. If the product $\prod_{t\in T}([X_t]^{<\omega}\setminus\{\emptyset\})$ is non-empty, then it is said that $\mathcal{X}$ has a multiple choice function, and every element of $\prod_{t\in T}([X_t]^{<\omega}\setminus\{\emptyset\})$ is called a \emph{multiple choice function} of $\mathcal{X}$. 

  We usually denote topological and metric spaces with boldface letters, and their underlying sets with lightface letters. 

Given a topological space $\mathbf{X}=\langle X, \tau\rangle$ and a subset $P$ of $X$, $\mathbf{P}$ denotes the (topological) subspace of $\mathbf{X}$ with the underlying set $P$. That is, $\mathbf{P}=\langle P, \tau\vert_P \rangle$ where $\tau\vert_P=\{U\cap P: u\in\tau\}$; however, if this is not misleading, the subspace $\mathbf{P}$ of $\mathbf{X}$ can be denoted by $P$. The closure of $P$ in $\mathbf{X}$ is denoted by $\cl_{\mathbf{X}}(P)$, and the interior of $P$ in $\mathbf{X}$ is denoted by $\inter_{\mathbf{X}}(P)$. 

If this is not misleading, we denote by $\mathbb{R}$ not only the set of all real numbers but also the topological space $\langle\mathbb{R}, \tau_{nat}\rangle$ where $\tau_{nat}$ is the natural topology induced by the standard linear order $\leq$ of the set $\mathbb{R}$. 

Let $\mathbf{X}=\langle X, \tau_X\rangle$ and $\mathbf{Y}=\langle Y, \tau_Y\rangle$ be topological spaces. The set of all $\langle\tau_X, \tau_Y\rangle$-continuous mappings $f:X\to Y$ is denoted by $C(\mathbf{X}, \mathbf{Y})$. A \emph{continuous mapping} $f:\mathbf{X}\to\mathbf{Y}$ is a $\langle\tau_X, \tau_Y\rangle$-continuous mapping $f: X\to Y$. The set $C(\mathbf{X}, \mathbb{R})$ is denoted by $C(\mathbf{X})$. For $f\in C(\mathbf{X})$, the set $Z(f):=f^{-1}[\{0\}]$ is the \emph{zero-set} of $f$. We define $\mathcal{Z}(\mathbf{X}):=\{Z(f): f\in C(\mathbf{X})\}$ and $\mathcal{Z}^c(\mathbf{X}):=\{ U\in\mathcal{P}(X): X\setminus U\in \mathcal{Z}(\mathbf{X})\}$. Members of $\mathcal{Z}(\mathbf{X})$ are called \emph{zero-sets} of $\mathbf{X}$. Members of $\mathcal{Z}^c(\mathbf{X})$ are called \emph{cozero-sets} in $\mathbf{X}$. A subset $A$ of $\mathbf{X}$ is called \emph{clopen} if both $A$ and $X\setminus A$ are open in $\mathbf{X}$. The family of all clopen sets in $\mathbf{X}$ is denoted by $\mathcal{CO}(\mathbf{X})$. We denote by $\mathcal{CO}_{\delta}(\mathbf{X})$ the family $\mathcal{CO}(\mathbf{X})_{\delta}$. That is, 
\[\mathcal{CO}_{\delta}(\mathbf{X})=\{\bigcap \mathcal{U}:\mathcal{U}\in[\mathcal{CO}(\mathbf{X})]^{\leq\omega}\setminus\{\emptyset\}\}.\] 
\noindent Members of $\mathcal{CO}_{\delta}(\mathbf{X})$ are called $c_{\delta}$-sets in $\mathbf{X}$. Filters (respectively, ultrafilters) in the family $\mathcal{CO}(\mathbf{X})$ are called \emph{clopen filters} (respectively, \emph{clopen ultrafilters}) in $\mathbf{X}$. Filters (respectively, ultrafilters) in the family $\mathcal{Z}(\mathbf{X})$ are called $z$-\emph{filters} (respectively, $z$-\emph{ultrafilters}) in $\mathbf{X}$.  Filters (respectively, ultrafilters) in the family $\mathcal{CO}_{\delta}(\mathbf{X})$ are called $c_{\delta}$-\emph{filters} (respectively, $c_{\delta}$-\emph{ultrafilters}) in $\mathbf{X}$. A filter $\mathcal{F}$ in a family $\mathcal{R}$ of subsets of $\mathbf{X}$ is called a \emph{fixed filter} if $\bigcap\mathcal{F}\neq\emptyset$; otherwise, $\mathcal{F}$ is called a \emph{free filter}. It is said that a family $\mathcal{F}$ of subsets of $X$ has the \emph{countable intersection property} if, for every $\mathcal{U}\in[\mathcal{F}]^{\leq\omega}\setminus\{\emptyset\}$, $\bigcap\mathcal{U}\neq\emptyset$.

In the following definition, we modify the concept of the countable intersection property of a family of zero-sets.

\begin{definition}
\label{s1:d1} 
	\label{s3;d6}
	Let $\mathbf{X}$ be a topological space and let $\mathcal{F}\subseteq\mathcal{Z}(\mathbf{X})$. We say that:
	\begin{enumerate}
		\item[(i)] $\mathcal{F}$ is \emph{functionally accessible} if there exists a family $\{f_Z: Z\in\mathcal{F}\}$ of functions from $C(\mathbf{X})$ such that, for every $Z\in\mathcal{F}$, $Z(f_Z)=Z$;
		\item[(ii)]$\mathcal{F}$ has the \emph{weak countable intersection property} if, for every non-empty functionally accessible countable subfamily $\mathcal{F}^{\ast}$ of $\mathcal{F}$, $\bigcap\mathcal{F}^{\ast}\neq\emptyset$. 
	\end{enumerate}
\end{definition}

We are concerned with the classes of spaces defined as follows.

\begin{definition}
\label{s1:d2} A topological space $\mathbf{X}$ is called:
\begin{enumerate}
\item[(i)] $\mathbb{N}$-\emph{compact} if there exists a non-empty set $J$ such that $\mathbf{X}$ is homeomorphic with a closed subspace of $\mathbb{N}^J$;
\item[(ii)] \emph{realcompact} if there exists a non-empty set $J$ such that $\mathbf{X}$ is homeomorphic with a closed subspace of $\mathbb{R}^J$;
\item[(iii)] \emph{completely regular} if $\mathcal{Z}^c(\mathbf{X})$ is a base for $\mathbf{X}$; completely regular $T_1$-spaces are called \emph{Tychonoff spaces};
\item[(iv)] \emph{zero-dimensional} if $\mathcal{CO}(\mathbf{X})$ is a base for $\mathbf{X}$;
\item[(v)] \emph{strongly zero-dimensional} if $\mathbf{X}$ is completely regular and, for every pair $A,B$ of disjoint zero-sets in $\mathbf{X}$, there exists $C\in\mathcal{CO}(\mathbf{X})$ such that $A\subseteq C\subseteq X\setminus B$.
\end{enumerate}
\end{definition}

Realcompact spaces were called $Q$-spaces by Hewitt in \cite{hewitt}. Shirota showed in \cite{sh} that the class of Hewitt's $Q$-spaces is exactly the class of $e$-complete spaces in $\mathbf{ZFC}$.  Already Hewitt proved in \cite{hewitt} that the following theorem is true in $\mathbf{ZFC}$.

\begin{theorem}
\label{s1:t3}
(Hewitt's theorem; see \cite[Theorem 50]{hewitt}.) A Tychonoff space $\mathbf{X}$ is realcompact if and only if every $z$-ultrafilter with the countable intersection property in $\mathbf{X}$ is fixed.
\end{theorem}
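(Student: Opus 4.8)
The plan is to prove the two implications separately, reading realcompactness in the sense of Definition \ref{s1:d2}(ii) as the existence of a closed embedding into a power of $\mathbb{R}$. For necessity, suppose $\mathbf{X}$ is realcompact, fix a non-empty $J$ and a homeomorphism $e$ of $\mathbf{X}$ onto a closed subspace of $\mathbb{R}^J$, and let $\mathcal{U}$ be a $z$-ultrafilter with the countable intersection property. For $j\in J$ put $f_j=\pi_j\circ e\in C(\mathbf{X})$. I would first attach a real number to each coordinate: the sets $\{x:f_j(x)\le q\}$ and $\{x:f_j(x)\ge q\}$ are zero-sets with union $X$, so, as a $z$-ultrafilter is prime, one of them lies in $\mathcal{U}$ for every $q\in\mathbb{Q}$, while the countable intersection property forbids $\{x:f_j(x)\ge n\}\in\mathcal{U}$ for all $n\in\mathbb{N}$; the resulting Dedekind cut of $\mathbb{Q}$ determines a unique real $r_j$, and I set $p=(r_j)_{j\in J}$. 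For each finite $F\subseteq J$ and each $\varepsilon>0$ the bounded zero-set $\bigcap_{j\in F}\{x:|f_j(x)-r_j|\le\varepsilon\}$ lies in $\mathcal{U}$, hence is non-empty and witnesses that the corresponding basic neighbourhood of $p$ meets $e[X]$; thus $p\in\cl_{\mathbb{R}^J}(e[X])=e[X]$, say $p=e(x_0)$, and the same computation shows that every neighbourhood of $x_0$ meets every $Z\in\mathcal{U}$, so $x_0\in\bigcap\mathcal{U}$ because zero-sets are closed. Every step here is canonical once $\mathcal{U}$ and $e$ are fixed, so I expect this direction to be valid in $\mathbf{ZF}$.

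For sufficiency I would embed $\mathbf{X}$ by the evaluation map $e\colon X\to\mathbb{R}^{C(\mathbf{X})}$, $e(x)=(f(x))_{f\in C(\mathbf{X})}$, which is a homeomorphism onto its image because $\mathbf{X}$ is Tychonoff, and try to prove $e[X]$ closed. Let $p\in\cl_{\mathbb{R}^{C(\mathbf{X})}}(e[X])$. Since for fixed $f,g$ and $\lambda\in\mathbb{R}$ the maps $y\mapsto y_{f+g}-y_f-y_g$, $y\mapsto y_{fg}-y_fy_g$ and $y\mapsto y_{\lambda f}-\lambda y_f$ are continuous and vanish on $e[X]$, they vanish at $p$, so $\phi(f):=p_f$ defines a unital $\mathbb{R}$-algebra homomorphism $\phi\colon C(\mathbf{X})\to\mathbb{R}$ (constants are preserved, being single coordinates). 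I would then check that $\mathcal{U}_p:=\{Z(f):\phi(f)=0\}$ is a $z$-ultrafilter: it is a filter since $Z(f)\cap Z(g)=Z(f^2+g^2)$ with $\phi(f^2+g^2)=\phi(f)^2+\phi(g)^2$, and it is maximal because $Z(h)\notin\mathcal{U}_p$ means $\phi(h)=\beta\neq0$, whence $Z(h-\beta)\in\mathcal{U}_p$ is disjoint from $Z(h)$. Granting the countable intersection property of $\mathcal{U}_p$, the hypothesis makes it fixed at some $x_0$; applying $x_0\in\bigcap\mathcal{U}_p$ to $Z(f-\phi(f))\in\mathcal{U}_p$ for each $f$ gives $f(x_0)=\phi(f)=p_f$, that is $e(x_0)=p$, so $e[X]$ is closed and $\mathbf{X}$ is realcompact.

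I expect the one genuine obstacle to be the countable intersection property of $\mathcal{U}_p$, and it is exactly where the axiom of choice intervenes. Given zero-sets $Z(f_n)\in\mathcal{U}_p$, I would form $g=\sum_{n\in\mathbb{N}}2^{-n}\frac{f_n^2}{1+f_n^2}\in C(\mathbf{X})$, which satisfies $Z(g)=\bigcap_n Z(f_n)$, and observe that $y\mapsto\sum_n 2^{-n}\frac{y_{f_n}^2}{1+y_{f_n}^2}$ is a uniformly convergent, hence continuous, function on $\mathbb{R}^{C(\mathbf{X})}$ agreeing with $y_g$ on $e[X]$; since $\phi(f_n)=p_{f_n}=0$, evaluation at $p$ forces $\phi(g)=0$, so $\bigcap_n Z(f_n)=Z(g)\in\mathcal{U}_p$ is non-empty. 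The snag is that forming $g$ requires choosing, for each $n$, a representing function $f_n\in C(\mathbf{X})$ with $Z(f_n)$ equal to the prescribed member of $\mathcal{U}_p$, and such a countable selection is not available in $\mathbf{ZF}$. This is precisely the phenomenon named by functional accessibility and the weak countable intersection property in Definition \ref{s1:d1}, and it is why the countable axiom of multiple choice, which furnishes a finite set $F_n$ of admissible representatives at each $n$ from which one canonically builds $\sum_{f\in F_n}f^2$, suffices to complete the proof, whereas the full $\mathbf{ZF}$ provability of the theorem stays delicate.
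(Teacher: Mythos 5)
Your proposal is correct and reaches exactly the conclusions the paper itself reaches for this statement (necessity in $\mathbf{ZF}$, sufficiency only under $\mathbf{CMC}$, with Problem \ref{s7:3} left open), but by a genuinely different route. For necessity, the paper passes through $v_{\mathbb{R}(\infty)}\mathbf{X}$ sitting in $\mathbb{R}(\infty)^{C(\mathbf{X},\mathbb{R}(\infty))}$, uses the compactness of $\mathbb{R}(\infty)$ to cluster the traces of $\mathcal{U}$ at a point $p$ (taking $p(j)=\max(A_j)$ to avoid choice) and then invokes Theorem \ref{s2:t17}; your Dedekind-cut construction of $r_j$ from primeness of the $z$-ultrafilter plus the countable intersection property works inside an arbitrary closed embedding into $\mathbb{R}^J$ and is more elementary. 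Moreover, since you only apply the countable intersection property to the functionally accessible families $\{\{f_j\ge n\}\}_n$ and $\{\{f_j\le -n\}\}_n$, your argument in fact yields the stronger implication (i)$\to$(ii) of Theorem \ref{s5:t3}. For sufficiency you take the classical Hewitt--Gillman--Jerison route via the algebra homomorphism $\phi$ and the $z$-ultrafilter $\mathcal{U}_p$ derived from its kernel --- precisely the ring-theoretic machinery the paper says it deliberately avoids --- and you correctly isolate the unique $\mathbf{ZF}$-obstruction: the countable selection of representing functions, i.e.\ functional accessibility. Note that if you replace the countable intersection property by the weak one in the hypothesis, your argument closes in $\mathbf{ZF}$: any representative $h$ of a member of $\mathcal{U}_p$ automatically satisfies $\phi(h)=0$ (otherwise $Z(h-\phi(h))\in\mathcal{U}_p$ would be disjoint from $Z(h)$), so a functionally accessible countable subfamily comes with the data needed to form $g$, and you recover the paper's Theorem \ref{s5:t3} (ii)$\to$(i) as well. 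Two small points worth writing out: closure of $\mathcal{U}_p$ under zero-set supersets needs $Z(fg)=Z(f)\cup Z(g)$ with $fg\in\ker\phi$ (your $f^2+g^2$ device only gives closure under intersections), and $\emptyset\notin\mathcal{U}_p$ needs the observation that $Z(f)=\emptyset$ makes $f$ invertible, so $\phi(f)\neq 0$.
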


That Theorem \ref{s1:t3} is true in $\mathbf{ZFC}$ was shown also by Shirota in \cite[Theorem 1]{sh}. Herrlich proved in \cite{her}, and independently,  Chew proved in \cite{Chew} that the following theorem is true in $\mathbf{ZFC}$.

\begin{theorem}
\label{s1:t4}
(Herrlich-Chew theorem; see \cite[Satz 6.2, p. 251]{her} and  \cite[Theorem A]{Chew}.) A zero-dimensional $T_1$-space is $\mathbb{N}$-compact if and only if every clopen ultrafilter with the countable intersection property in $\mathbf{X}$ is fixed.
\end{theorem}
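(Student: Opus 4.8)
The plan is to prove the Herrlich-Chew theorem in $\mathbf{ZF}$ by carefully adapting the classical $\mathbf{ZFC}$ argument and isolating exactly where choice was used so that it can be removed. Recall that the key tool in the classical proof is that a zero-dimensional $T_1$-space $\mathbf{X}$ embeds into a power $\mathbb{N}^J$, where the index set $J$ can be taken to be $\mathcal{CO}(\mathbf{X})$ itself (or $C(\mathbf{X},\mathbb{N})$), via the evaluation map; this evaluation embedding requires no choice because the family of characteristic functions of clopen sets is canonically indexed by $\mathcal{CO}(\mathbf{X})$. I would first establish that for a zero-dimensional $T_1$-space, the evaluation map $e\colon X\to \mathbb{N}^{C(\mathbf{X},\mathbb{N})}$ (or into $\{0,1\}^{\mathcal{CO}(\mathbf{X})}\subseteq \mathbb{N}^{\mathcal{CO}(\mathbf{X})}$) is a homeomorphic embedding, so that $\mathbf{X}$ is $\mathbb{N}$-compact precisely when $e[X]$ is closed in $\mathbb{N}^{C(\mathbf{X},\mathbb{N})}$.

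Next I would prove the two implications separately. For the easy direction, I would assume $\mathbf{X}$ is $\mathbb{N}$-compact, i.e.\ realized as a closed subspace of some $\mathbb{N}^J$, and show every clopen ultrafilter with the countable intersection property is fixed. The idea is that in $\mathbb{N}^J$ (and in its closed subspaces) a clopen ultrafilter $\mathcal{U}$ with the countable intersection property generates, via the projections $\pi_j$, a coherent point: for each coordinate $j$, the images $\pi_j[U]$ for $U\in\mathcal{U}$ concentrate on a single natural number $n_j$ because $\mathbb{N}$ is discrete and countable and the countable intersection property forces the ultrafilter to select a single value on each countable coordinate-determined partition. Since $\mathbb{N}$ is well-ordered, choosing $n_j$ as the least natural number with $\pi_j^{-1}[\{n_j\}]\in\mathcal{U}$ is canonical and needs no choice; the resulting point $p=(n_j)_{j\in J}$ then lies in the closure of every $U\in\mathcal{U}$, and closedness of $X$ in $\mathbb{N}^J$ places $p\in X$, whence $p\in\bigcap\mathcal{U}$.

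For the harder direction, I would assume every clopen ultrafilter with the countable intersection property is fixed and show $e[X]$ is closed in $\mathbb{N}^{C(\mathbf{X},\mathbb{N})}$. Given a point $p$ in the closure of $e[X]$, I would form the family $\mathcal{F}_p=\{C\in\mathcal{CO}(\mathbf{X}): e[C]\ni\text{``}p\text{''}\text{ in the appropriate coordinate sense}\}$, more precisely the trace on $X$ of the basic clopen neighborhoods of $p$; this is a clopen filter on $\mathbf{X}$, and I would extend it to a clopen ultrafilter $\mathcal{U}_p$. Here lies the main obstacle: extending a filter to an ultrafilter generally invokes the Boolean prime ideal theorem, which is a choice principle. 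The crucial observation that makes the theorem provable in pure $\mathbf{ZF}$ is that no extension is needed, because $\mathcal{F}_p$ is \emph{already} an ultrafilter. Indeed, the coordinates of $\mathbb{N}^{C(\mathbf{X},\mathbb{N})}$ separate clopen sets, so for every clopen $C$ exactly one of $C$, $X\setminus C$ has $p$ in the closure of its image, which is exactly the ultrafilter dichotomy; thus $\mathcal{F}_p$ is a clopen ultrafilter canonically determined by $p$, with no appeal to choice.

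I would then verify that $\mathcal{F}_p$ has the countable intersection property, using that $p$ lies in the closure of $e[X]$ together with the fact that countable intersections of the relevant clopen sets correspond to points of $\mathbb{N}^{C(\mathbf{X},\mathbb{N})}$ agreeing with $p$ on countably many coordinates; the discreteness of $\mathbb{N}$ makes each such agreement a nonempty pullback. By hypothesis $\mathcal{F}_p$ is fixed, so there is $x\in\bigcap\mathcal{F}_p$, and one checks $e(x)=p$, giving $p\in e[X]$ and hence $e[X]$ closed. The whole argument is choice-free precisely because (i) the embedding is canonical, (ii) $\mathbb{N}$ is canonically well-ordered so selecting coordinate values requires no choice, and (iii) the filter $\mathcal{F}_p$ arising from a closure point is automatically an ultrafilter, eliminating the need for the Boolean prime ideal theorem. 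The main work, and the step I expect to demand the most care, is the verification that $\mathcal{F}_p$ is genuinely an ultrafilter with the countable intersection property in $\mathbf{ZF}$, since this is exactly the point where the classical proof silently used choice.
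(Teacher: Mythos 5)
Your overall architecture is sound and in places more direct than the paper's: you work entirely inside $\mathbb{N}^{C(\mathbf{X},\mathbb{N})}$, whereas the paper's proof of Theorem \ref{s3:t2} passes to $\mathbb{N}(\infty)^{C(\mathbf{X},\mathbb{N}(\infty))}$ and routes both implications through the Hewitt extension $v_{\mathbb{N}(\infty)}\mathbf{X}$ and Theorem \ref{s2:t16}. Your easy direction (taking the least $n_j$ with $\pi_j^{-1}[\{n_j\}]\cap X\in\mathcal{U}$, which exists and is unique because the countable intersection property applied to the clopen partition $\{\pi_j^{-1}[\{n\}]\cap X: n\in\mathbb{N}\}$ rules out the case that no cell belongs to $\mathcal{U}$) is correct and choice-free, and your observation that $\mathcal{F}_p=\{C\in\mathcal{CO}(\mathbf{X}): p\in\cl(e[C])\}$ is automatically a clopen ultrafilter is also correct; it corresponds to condition $(a)$ in the paper's proof.

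The genuine gap is the verification that $\mathcal{F}_p$ has the countable intersection property, which is the heart of the theorem, and your stated justification appeals to a false principle. You say that countable intersections of members of $\mathcal{F}_p$ correspond to points of $e[X]$ ``agreeing with $p$ on countably many coordinates'' and that ``the discreteness of $\mathbb{N}$ makes each such agreement a nonempty pullback.'' But a point in the closure of a set $S\subseteq\mathbb{N}^J$ need not agree with any point of $S$ on a prescribed countably infinite set of coordinates: take $S=\{e_n: n\in\mathbb{N}\}\subseteq\{0,1\}^{\mathbb{N}}\subseteq\mathbb{N}^{\mathbb{N}}$ with $e_n$ the indicator of $\{n\}$; the zero sequence lies in $\cl(S)\setminus S$, yet every $e_n$ disagrees with it at coordinate $n$. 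What actually saves the argument is that the index set is \emph{all} of $C(\mathbf{X},\mathbb{N})$: given $F_n\in\mathcal{F}_p$, taken decreasing and with $\bigcap_{n}F_n=\emptyset$ for contradiction, one must combine them into a single function $f_0\in C(\mathbf{X},\mathbb{N})$ defined by $f_0(x)=1$ for $x\in X\setminus F_1$ and $f_0(x)=n+1$ for $x\in F_n\setminus F_{n+1}$; since $f_0$ is itself a coordinate and $m:=\pi_{f_0}(p)\in\mathbb{N}$, the set $f_0^{-1}[\{m\}]$ belongs to $\mathcal{F}_p$ while being disjoint from $F_m\in\mathcal{F}_p$, a contradiction. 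This combination step is precisely the function $f_0$ constructed in the paper's proof, and it is the one place where the fact that the target is $\mathbb{N}$ rather than $\mathbb{R}$ does real work; without it your hard direction does not go through.
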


To the best of our knowledge, the axiom of choice is involved in all already published proofs of Theorems \ref{s1:t3} and \ref{s1:t4}, so their known proofs are valid in $\mathbf{ZFC}$ but are incorrect in $\mathbf{ZF}$. The main aim of this article is to investigate whether Theorems \ref{s1:t3} and \ref{s1:t4} can be proved also in $\mathbf{ZF}$. To do this well, we need to apply to some results the weaker forms of $\mathbf{AC}$, defined as follows.

\begin{definition}
\label{s1:d5}
\begin{itemize}
\item $\mathbf{BPI}$ (the Boolean Prime Ideal Theorem, \cite[Form 14]{hr}): Every Boolean algebra has a prime ideal.
\item $\mathbf{CMC}$ (the countable axiom of multiple choice, \cite[Form 126]{hr}): Every countable family $\mathcal{A}=\{A_n: n\in\mathbb{N}\}$ of non-empty sets has a multiple choice function. 
\item $\mathbf{A}(\delta\delta)$: For every set $X$ and every family $\mathcal{A}$ of subsets of $X$ such that $\mathcal{A}$ is closed under finite unions and finite intersections, for every family $\{\mathcal{U}_n: n\in\omega\}$ of non-empty countable subfamilies of $\mathcal{A}$, there exists a family $\{V_{m,n}: m,n\in\omega\}$ of members of $\mathcal{A}$ such that, for every $n\in\omega$, $\bigcap\mathcal{U}_n=\bigcap_{m\in\omega}V_{m,n}$.(Cf. \cite[Definition 1.7(4)]{kow}.)
\end{itemize}
\end{definition}

\begin{remark}
\label{s1:r6}
(a) The form $\mathbf{A}(\delta\delta)$ was introduced to mathematics in \cite[Definition 1.7(4)]{kow}. By \cite[Theorem 2.9(ii) and Remark 2.20]{kow}, it holds in $\mathbf{ZF}$ that $\mathbf{CMC}$ implies $\mathbf{A}(\delta\delta)$, and this implication is not reversible.\medskip

(b) One can check in \cite{hr} that, for instance, Cohen's original model $\mathcal{M}1$ in \cite{hr} is a model of $\mathbf{ZF+BPI+\neg CMC}$, Feferman's model $\mathcal{M}2$ in \cite{hr} is a model of $\mathbf{ZF+CMC+\neg BPI}$, Pincus' model IV (model $\mathcal{M}40(\kappa)$ in \cite{hr}) is a model of $\mathbf{ZF+BPI+CMC}$, Monro's model III (model $\mathcal{M}37$ in \cite{hr}) is a model of $\mathbf{ZF+\neg  BPI+\neg CMC}$. The axiom of choice is false in all these models.\medskip

(c) We recall that $\mathbf{BPI}$ is equivalent to the following statements:
\begin{itemize}
\item ( \cite[Form 14 J]{hr}) Every product of compact Hausdorff spaces is compact.
\item (\cite[Form 14 K]{hr}) Every Cantor cube is compact.
\end{itemize}
\end{remark}

The main results  of our article concerning the question if Hewitt's theorem or Chew's theorem (or both the theorems) can be proved in $\mathbf{ZF}$ are summarized in the following theorem.

\begin{theorem}
\label{s1:t7}
\begin{enumerate}
\item[(i)] The Herrlich-Chew theorem (Theorem \ref{s1:t4}) is provable in $\mathbf{ZF}$.
\item[(ii)] For every Tychonoff space $\mathbf{X}$, it is true in $\mathbf{ZF}$ that $\mathbf{X}$ is realcompact if and only if every $z$-ultrafilter with the weak countable intersection property in $\mathbf{X}$ is fixed.
\item[(iii)] Hewitt's theorem (Theorem \ref{s1:t3}) is valid in every model of $\mathbf{ZF+CMC}$.
\item[(iv)]  It is true in $\mathbf{ZF}$ that, for every zero-dimensional $T_1$-space $\mathbf{X}$, $\mathbf{CMC}$ implies that $\mathbf{X}$ is $\mathbb{N}$-compact if and only if every $c_{\delta}$-ultrafilter with the countable intersection property in $\mathbf{X}$ is fixed.
\end{enumerate}
\end{theorem}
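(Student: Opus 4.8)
The plan is to handle all four parts through one unifying device---the evaluation map into a power of $\mathbb{N}$ (respectively $\mathbb{R}$)---and to reduce the whole theorem to the construction of a suitable ultrafilter from a point of a closure. First I would record, in $\mathbf{ZF}$, that a zero-dimensional $T_1$-space $\mathbf{X}$ embeds into $\mathbb{N}^{C(\mathbf{X},\mathbb{N})}$ through the evaluation map $e(x)=(f(x))_{f}$ (injectivity and initiality follow from zero-dimensionality and $T_1$, since the indicator $\chi_C$ of a clopen set $C$ lies in $C(\mathbf{X},\mathbb{N})$ and such sets form a base), and likewise that a Tychonoff $\mathbf{X}$ embeds into $\mathbb{R}^{C(\mathbf{X})}$. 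The first genuine lemma, proved in $\mathbf{ZF}$ by a finite-coordinate neighbourhood argument, is that $\mathbf{X}$ is $\mathbb{N}$-compact (resp.\ realcompact) if and only if $e(X)$ is closed in the corresponding power: one direction is immediate since powers of $\mathbb{N}$ (resp.\ $\mathbb{R}$) are $\mathbb{N}$-compact (resp.\ realcompact) and closed subspaces inherit these properties; for the other, given a closed embedding $\phi\colon\mathbf{X}\to\mathbb{N}^{K}$ one projects a closure point $p$ of $e(X)$ through the coordinates $\pi_k\circ\phi$ to recover a point $\phi(x_0)$ and then checks coordinatewise that $p=e(x_0)$. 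No form of choice enters, because $x_0$ and all the data are canonically determined.

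For part (i), the forward direction assumes $X$ closed in $\mathbb{N}^{J}$ and takes a free clopen ultrafilter $\mathcal{F}$ with the countable intersection property. For each $j$ the clopen partition $\{g_j^{-1}(\{n\})\}_{n\in\mathbb{N}}$, where $g_j=\pi_j|_X$, must have exactly one block in $\mathcal{F}$: otherwise all complements lie in $\mathcal{F}$ and their countable intersection is empty, contradicting the countable intersection property. This canonically produces $p=(n_j)_j\in\mathbb{N}^J$, which lies in $\cl(X)=X$ and in $\bigcap\mathcal{F}$, so $\mathcal{F}$ is fixed, a contradiction. The reverse direction is where the essential $\mathbf{ZF}$ gadget appears: from $p\in\cl(e(X))\setminus e(X)$ I form the clopen filter $\mathcal{F}_p$ generated by the sets $f^{-1}(\{p_f\})$. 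Because indicators of clopen sets are coordinates, $\mathcal{F}_p$ is automatically a clopen ultrafilter, and it is free because a common point would be a preimage of $p$. To see that $\mathcal{F}_p$ has the countable intersection property, given $f_k\in C(\mathbf{X},\mathbb{N})$ with targets $a_k=p_{f_k}$, I consider the continuous ``first-failure'' function $h(x)=\min\{k: f_k(x)\neq a_k\}$, which is well defined and $\mathbb{N}$-valued precisely when $\bigcap_k f_k^{-1}(\{a_k\})=\emptyset$; then $h^{-1}(\{p_h\})\in\mathcal{F}_p$ is disjoint from $f_{p_h}^{-1}(\{a_{p_h}\})\in\mathcal{F}_p$, which is impossible. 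Hence the intersection is non-empty, $\mathcal{F}_p$ is a free clopen ultrafilter with the countable intersection property, and the hypothesis is violated.

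Part (ii) runs parallel with $\mathbb{R}$ in place of $\mathbb{N}$, but the first-failure function is no longer continuous, because the relevant pieces are only zero-sets, not clopen; this is exactly why full countable intersection must be weakened to the weak countable intersection property of Definition \ref{s1:d1}. In the reverse direction, given a functionally accessible countable subfamily $\{Z_k\}$ of $\mathcal{F}_p$ with witnesses $g_k$ (so $Z_k=Z(g_k)$), I replace $h$ by the single uniformly convergent function $H=\sum_k 2^{-k}\frac{|g_k|}{1+|g_k|}\in C(\mathbf{X})$, whose zero-set is $\bigcap_k Z_k$; since the defining identity is preserved under closures and each $p_{g_k}=0$, one gets $p_H=0$, so $\bigcap_k Z_k=H^{-1}(\{p_H\})\in\mathcal{F}_p$ is non-empty. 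This yields the weak countable intersection property, and functional accessibility is used precisely to be allowed to form $H$ from the chosen $g_k$. The forward direction mirrors part (i): each coordinate function $g_j$ is explicit, so every auxiliary family of zero-sets built from it is functionally accessible and the weak countable intersection property suffices to force the coordinates to converge to a point of $X=\cl(X)$. Part (iii) then follows from part (ii): under $\mathbf{CMC}$ a $z$-ultrafilter has the countable intersection property if and only if it has the weak one, because given countably many zero-sets one uses $\mathbf{CMC}$ to choose finite non-empty sets $S_n$ of defining functions and forms the witnesses $\sum_{f\in S_n}f^2$, rendering every countable subfamily functionally accessible.

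Finally, part (iv) is reduced to part (i): under $\mathbf{CMC}$ I would prove that every clopen ultrafilter with the countable intersection property is fixed if and only if every $c_{\delta}$-ultrafilter with the countable intersection property is fixed, by matching a clopen ultrafilter $\mathcal{F}$ with the $c_{\delta}$-filter it generates and a $c_{\delta}$-ultrafilter $\mathcal{G}$ with its clopen trace, checking in each case that maximality and the countable intersection property transfer and that $\bigcap\mathcal{F}=\bigcap\mathcal{G}$. The main obstacle throughout is the reverse implications, where a point in a closure must be upgraded to an ultrafilter carrying the correct intersection property: for clopen sets the function $h$ disposes of this in pure $\mathbf{ZF}$; for zero-sets only the weaker property survives and the series $H$ is available only after a choice of witnesses; and for the $c_{\delta}$-case the manipulation of countable intersections of clopen sets forces one to flatten and re-represent them, which is exactly the content of $\mathbf{A}(\delta\delta)$ guaranteed by $\mathbf{CMC}$ (Remark \ref{s1:r6}).
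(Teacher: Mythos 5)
Your proposal is correct in substance and reaches all four parts, but it executes the key steps differently from the paper. The paper routes everything through the Hewitt $\mathbf{E}$-compact extensions $v_{\mathbb{N}(\infty)}\mathbf{X}$ and $v_{\mathbb{R}(\infty)}\mathbf{X}$ built in Section \ref{s2}: the forward direction of the Herrlich--Chew theorem is obtained by modifying the proof of the Tychonoff Product Theorem inside $\mathbb{N}(\infty)^{J}$ and then invoking Theorem \ref{s2:t16} to produce a map sending the limit point to $\infty$, and the realcompact case likewise lives in $\mathbb{R}(\infty)^{J}$ with Theorem \ref{s2:t17}. You instead work directly in $\mathbb{N}^{C(\mathbf{X},\mathbb{N})}$ and $\mathbb{R}^{C(\mathbf{X})}$ with the closed-image characterization (the paper's Theorem \ref{s2:t12}), and your forward direction of (i) replaces the whole $\mathbb{N}(\infty)$ apparatus by the observation that the countable clopen partition $\{g_j^{-1}(\{n\})\}_{n}$ must place exactly one block in the ultrafilter; this is genuinely more elementary and is a clean $\mathbf{ZF}$ argument. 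The reverse directions coincide in essence with the paper's: your ``first-failure'' function is exactly the paper's $f_0$ taking value $n+1$ on $F_n\setminus F_{n+1}$, and your series $H$ is the paper's $f=\sum_n 2^{-n}f_n$ followed by the $1/f$ trick. Parts (iii) and (iv) follow the paper's reductions (Lemma \ref{s5:l1} and Theorem \ref{s4:t2}) almost verbatim. The one place where your sketch is too thin is the forward direction of (ii): it does not literally ``mirror part (i)'', because $\mathbb{R}$ admits no countable clopen partition into level sets. You must first use the weak countable intersection property on the explicitly functionally accessible family $\{\,\{x:|f(x)|\geq n\}: n\in\mathbb{N}\,\}$ to show each coordinate function is bounded on some member of the ultrafilter, and then make a canonical (choice-free) selection from the resulting non-empty compact intersection, e.g.\ its maximum --- which is precisely what the paper does via the compactness of $\mathbb{R}(\infty)$ and the definition $p(j)=\max(A_j)$. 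With that step supplied, the argument closes.
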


Theorem \ref{s1:t7}(i) is proved in Section \ref{s3} (see Theorem \ref{s3:t2}), and Theorem \ref{s1:t7}(iv) is proved in Section \ref{s4} (see Theorem \ref{s4:t2}).  Our proofs of items (ii) and (iii) of Theorem \ref{s1:t7} are included in Section \ref{s5} (see Theorem \ref{s5:t3} and Corollary \ref{s5:c4}). Several applications of Theorem \ref{s1:t7} are shown in Section \ref{s6}. The list of open problems concernig the results obtained here is given in Section \ref{s7}.

Before we pass to detailed proofs of items (i)--(iv) of Theorem \ref{s1:t7}, we need to establish some general facts about $\mathbf{E}$-compact spaces and Hewitt $\mathbf{E}$-compact extensions of $\mathbf{E}$-completely regular spaces in $\mathbf{ZF}$ in Section \ref{s2}.

\section{Basic facts about $\mathbf{E}$-compactness in $\mathbf{ZF}$}
\label{s2}

In $\mathbf{ZFC}$, a systematic study of $\mathbf{E}$-completely regular and $\mathbf{E}$-compact spaces, as a common generalization of realcompact and $\mathbb{N}$-compact spaces, in the sense of Definition \ref{s2:d1} given below, was started by Engelking and Mr\'owka in \cite{enmr}.

\begin{definition}
	\label{s2:d1}
	(Cf. e.g., \cite{enmr}, \cite{mr1}--\cite{mr5}.) Let $\mathbf{E}$ be a given topological space. A topological space $\mathbf{X}$ is called  $\mathbf{E}$-\emph{completely regular} (respectively, $\mathbf{E}$-\emph{compact}) if there exists a non-empty set $J$ such that $\mathbf{X}$ is homeomorphic with a subspace (respectively, a closed subspace) of $\mathbf{E}^J$. 
\end{definition}

The class of $\mathbb{R}$-completely regular spaces is the class of Tychonoff spaces, the class of $\mathbb{N}$-completely regular spaces is the class of zero-dimensional $T_1$-spaces, and the class of $\mathbb{R}$-compact spaces is the class of realcompact spaces. 

We need to apply realcompact and $\mathbb{N}$-compact extensions, so let us recall the following definitions.

\begin{definition}
	\label{s2:d2}
	Let $\mathbf{X}$, $\mathbf{Y}$ and $\mathbf{E}$ be  topological spaces.
	\begin{enumerate}
		\item[(a)] An \emph{extension} of $\mathbf{X}$ is an ordered pair $\langle\mathbf{Y}, h\rangle$ where  $h$ is a homeomorphic embedding of $\mathbf{X}$ into $\mathbf{Y}$ such that $h[X]$ is dense in $\mathbf{Y}$.
		\item[(b)] If, for $i\in\{1,2\}$,  $\langle\mathbf{Y}_i, h_i\rangle$ are extensions of $\mathbf{X}$, then:
		\begin{enumerate}
			\item[(i)] we write $\langle \mathbf{Y}_1, h_1\rangle\leq \langle \mathbf{Y}_2, h_2\rangle$ if there exists a continuous surjection $f: \mathbf{Y}_2\to \mathbf{Y}_1$ such that $f\circ h_2=h_1$;
			\item[(ii)] we write $\langle \mathbf{Y}_1, h_1\rangle\thickapprox\langle\mathbf{Y}_2, h_2\rangle$ and say that $\langle \mathbf{Y}_1, h_1\rangle$ and $\langle\mathbf{Y}_2, h_2\rangle$ are \emph{equivalent extensions} of $\mathbf{X}$ if there exists a homeomorphism $h: \mathbf{Y}_1\to\mathbf{Y}_2$ such that $h\circ h_1=h_2$.
		\end{enumerate}
		\item[(c)] If $\mathcal{P}$ is a topological property, then we say that an extension $\langle\mathbf{Y}, h\rangle$ has $\mathcal{P}$ if $\mathbf{Y}$ has $\mathcal{P}$. Compact extensions of $\mathbf{X}$ are called \emph{compactifications} of $\mathbf{X}$. $\mathbb{R}$-compact extensions are called \emph{realcompactifications}.
	\end{enumerate}
\end{definition}

\begin{remark}
	\label{s2:r3} 
	Let $\mathbf{X}$ be a topological space.
	\begin{enumerate}
		\item[(i)]Suppose $\langle \mathbf{Y}_1, h_1\rangle$ and $\langle \mathbf{Y}_2, h_2\rangle$ are Hausdorff extensions of $\mathbf{X}$. Then $\langle \mathbf{Y}_1, h_1\rangle\thickapprox \langle\mathbf{Y}_2, h_2\rangle$ if and only if $\langle \mathbf{Y}_1, h_1\rangle\leq \langle\mathbf{Y}_2, h_2\rangle$ and $\langle \mathbf{Y_2}, h_2\rangle\leq\langle \mathbf{Y}_1, h_1\rangle$.  
		
		\item[(ii)] For an extension $\langle\mathbf{Y}, h\rangle$ of $\mathbf{X}$, the space $\mathbf{X}$ is identified with the subspace $h[X]$ of $\mathbf{Y}$, and $h$ is identified  with the identity mapping $\id_{X}$ on $X$. The space $\mathbf{Y}$ and the extension $\langle \mathbf{Y}, h\rangle$ can be both denoted by $h\mathbf{X}$.
	\end{enumerate}
\end{remark} 

\begin{definition}
	\label{s2:d4}
	Suppose that $\mathbf{X}$ and $\mathbf{E}$ are Hausdorff spaces such that $\mathbf{X}$ is $\mathbf{E}$-completely regular.
	An $\mathbf{E}$-compact extension $\langle \mathbf{Y}, h\rangle$ of $\mathbf{X}$ is called a \emph{Hewitt} $\mathbf{E}$-\emph{compact extension} of $\mathbf{X}$ if it satisfies  the following condition:
	$$(\forall f\in C(\mathbf{X}, \mathbf{E}))(\exists \tilde{f}\in C(\mathbf{Y}, \mathbf{E}))\quad\tilde{f}\circ h=f.$$
\end{definition}

Many results on $\mathbf{E}$-compactness and $\mathbf{E}$-compact extensions, in particular, on realcompactness and Hewitt realcompactifications in $\mathbf{ZFC}$ are well-known, for instance, from \cite{en}, \cite{enmr}, \cite{gj},  \cite{hewitt}, \cite{mr0}--\cite{mr5}, \cite{nik} and  \cite{PorterWoods}. We especially recommend \cite[Chapter 3.11]{en} as a collection of basic results on realcompatness and Hewitt realcompactifications in $\mathbf{ZFC}$. Not all proofs of the results we need are given in $\mathbf{ZF}$ in the published literature. We should also mention that Comfort gave in \cite{com} a rather complicated $\mathbf{ZF}$-proof that a non-empty Tychonoff space is realcompact if and only if every real ideal of $C(\mathbf{X})$ is fixed. In addition, Comfort observed in \cite{com} that it is true in $\mathbf{ZF}$ that every Tychonoff space has its Hewitt realcompactification. However, to the best of our knowledge, our unpublished preprint \cite{ow1} is the first extensive work on $\mathbf{E}$-compactness (in particular, on realcompactness and $\mathbb{N}$-compactness) in $\mathbf{ZF}$. In the present article, we do not wish to apply real ideals  of rings of continuous real functions to $\mathbf{ZF}$-proofs of our main results. Characterizations of  $\mathbb{N}$-compactness via real ideals of certain rings of continuous real functions in $\mathbf{ZF}$ can be found in our preprint \cite{ow1} and will be a topic of our future work. We omit most of the preliminary, unpublished results included in \cite{ow1}, but show here only the ones we need to prove the main theorems announced in Section \ref{intro}. 

\begin{definition}
\label{s2:d5}
For topological spaces $\mathbf{X}$ and $\mathbf{E}$, $\mathcal{E}(\mathbf{X}, \mathbf{E})$ denotes the family of all subsets $F$ of $C(\mathbf{X}, \mathbf{E})$ such that the evaluation map $e_F: X\to E^{F}$, defined by:
	\[(\forall x\in X)(\forall f\in F) e_F(x)(f):=f(x),\]
is a homeomorphic embedding. For every $F\in\mathcal{E}(\mathbf{X}, \mathbf{E})$, the extension of $\mathbf{X}$ generated by $F$ is denoted by $e_F\mathbf{X}$ and defined as follows:
\[e_F\mathbf{X}:=\cl_{\mathbf{E}^F}(e_F[X]).\]
\end{definition}

\begin{remark}
\label{s2:r6}
Let $\mathbf{E}$ be a non-empty topological space. Then a topological space $\mathbf{X}$ is $\mathbf{E}$-completely regular if and only if $C(\mathbf{X}, \mathbf{E})\in\mathcal{E}(\mathbf{X},\mathbf{E})$.
\end{remark}

We recall the following well-known result.

\begin{proposition}
\label{s2:p7}
$[\mathbf{ZF}]$  Let $\mathbf{Y}$ be a Hausdorff space and let $D$ be a dense subset of a topological space $\mathbf{X}$. If $f,g\in C(\mathbf{X}, \mathbf{Y})$ are such that $f\upharpoonright D= g\upharpoonright D$, then $f=g$.
\end{proposition}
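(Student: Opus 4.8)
The statement to prove is Proposition \ref{s2:p7}: in $\mathbf{ZF}$, if $\mathbf{Y}$ is Hausdorff, $D$ is dense in $\mathbf{X}$, and $f,g \in C(\mathbf{X}, \mathbf{Y})$ agree on $D$, then $f = g$.

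The plan is to argue by contradiction, using the Hausdorff separation to build an open set meeting $D$ on which the two functions disagree, contradicting density. Specifically, suppose toward a contradiction that $f \neq g$, so there exists a point $x \in X$ with $f(x) \neq g(x)$. Since $\mathbf{Y}$ is Hausdorff, I can choose disjoint open sets $U, V \subseteq Y$ with $f(x) \in U$ and $g(x) \in V$. Here I must note that this selection involves only a single point, so no choice principle beyond $\mathbf{ZF}$ is needed; I pick one pair $(U,V)$ for the one bad point $x$, which is an entirely finite/explicit choice.

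Next I would use continuity: because $f$ and $g$ are continuous at $x$, the sets $f^{-1}[U]$ and $g^{-1}[V]$ are open in $\mathbf{X}$ and both contain $x$. Hence $W := f^{-1}[U] \cap g^{-1}[V]$ is an open neighborhood of $x$. Since $D$ is dense in $\mathbf{X}$ and $W$ is a nonempty open set (it contains $x$), we have $W \cap D \neq \emptyset$, so I may fix a point $d \in W \cap D$. Then $f(d) \in U$ and $g(d) \in V$, but since $f \upharpoonright D = g \upharpoonright D$ we have $f(d) = g(d)$, which would force a common point of $U$ and $V$, contradicting $U \cap V = \emptyset$. This contradiction shows $f(x) = g(x)$ for all $x$, hence $f = g$.

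There is essentially no serious obstacle here, and in particular no dependence on the axiom of choice: the only ``selections'' made are the single point $x$ where the functions are assumed to differ, the single pair of separating open sets for its two images, and the single density witness $d$, each of which is a choice from a nonempty set performed once, which is valid in $\mathbf{ZF}$. The one point warranting care is to make sure the argument never silently chooses simultaneously over an infinite family, and the contradiction framework guarantees this by localizing everything to one witnessing point $x$. Thus the whole proof is a short and direct $\mathbf{ZF}$ argument.
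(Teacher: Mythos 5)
Your proof is correct and is the standard argument (the paper simply states this proposition as well known and omits the proof); your care in noting that only finitely many one-off selections are made, so no choice principle is needed, is exactly right.
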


The proof of the following theorem is a modification of \cite[proof of Theorem 3.11.3]{en}. We include it for clarity and completeness.

\begin{theorem}
	\label{s2:t8}
	$[\mathbf{ZF}]$  Let $\mathbf{E}$ be a non-empty Hausdorff space and let $\mathbf{X}$ be an $\mathbf{E}$-completely regular space. 
	Then the following conditions are equivalent:
	\begin{enumerate}
		\item[(i)] $\mathbf{X}$ is $\mathbf{E}$-compact;
		\item[(ii)] for every Hausdorff space $\mathbf{Y}$ containing $\mathbf{X}$ as a dense subspace such that, for every $f\in C(\mathbf{X},\mathbf{E})$, there exists $\tilde{f}\in C(\mathbf{Y},\mathbf{E})$ with $\tilde{f}|_X=f$, the equality $\mathbf{X}=\mathbf{Y}$ holds;
		\item[(iii)] the set $e_{C(\mathbf{X}, \mathbf{E})}[X]$ is closed in $\mathbf{E}^{C(\mathbf{X}, \mathbf{E})}$.
	\end{enumerate}	
\end{theorem}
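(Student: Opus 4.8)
The plan is to establish the cycle of implications $\mathrm{(iii)}\Rightarrow\mathrm{(i)}\Rightarrow\mathrm{(ii)}\Rightarrow\mathrm{(iii)}$, writing $F_0:=C(\mathbf{X},\mathbf{E})$ throughout. Since $\mathbf{X}$ is $\mathbf{E}$-completely regular and $\mathbf{E}\neq\emptyset$, Remark \ref{s2:r6} gives $F_0\in\mathcal{E}(\mathbf{X},\mathbf{E})$, so $e_{F_0}\colon X\to E^{F_0}$ is a homeomorphic embedding. The implication $\mathrm{(iii)}\Rightarrow\mathrm{(i)}$ is then immediate: if $e_{F_0}[X]$ is closed in $\mathbf{E}^{F_0}$, then $\mathbf{X}$ is homeomorphic, via $e_{F_0}$, to a closed subspace of $\mathbf{E}^{F_0}$, which is exactly $\mathbf{E}$-compactness with $J=F_0$ (and $F_0\neq\emptyset$, since $\mathbf{E}$-complete regularity supplies coordinate functions in $C(\mathbf{X},\mathbf{E})$).

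For $\mathrm{(i)}\Rightarrow\mathrm{(ii)}$, I would fix a homeomorphism $\varphi\colon\mathbf{X}\to\mathbf{Z}$ onto a closed subspace $\mathbf{Z}$ of $\mathbf{E}^J$, and let $\mathbf{Y}$ be a Hausdorff space containing $\mathbf{X}$ as a dense subspace and enjoying the stated extension property. For each $j\in J$ the coordinate $\varphi_j:=\pi_j\circ\varphi$ lies in $C(\mathbf{X},\mathbf{E})$, so by hypothesis it admits a continuous extension to $\mathbf{Y}$; crucially, by Proposition \ref{s2:p7} this extension is \emph{unique}, so the assignment $j\mapsto\widetilde{\varphi_j}$ is a genuine function defined without any appeal to choice. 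I would then define $\Phi\colon Y\to E^J$ coordinatewise by $\Phi(y)(j):=\widetilde{\varphi_j}(y)$; it is continuous because each coordinate is, and $\Phi|_X=\varphi$. Since $X$ is dense in $\mathbf{Y}$ and $\mathbf{Z}$ is closed, continuity yields $\Phi[Y]\subseteq\cl_{\mathbf{E}^J}(\Phi[X])=\cl_{\mathbf{E}^J}(Z)=Z$, so $r:=\varphi^{-1}\circ\Phi\colon Y\to X$ is a continuous map with $r|_X=\id_X$. Regarding $r$ as a map into the Hausdorff space $\mathbf{Y}$, it agrees with $\id_Y$ on the dense set $X$, whence $r=\id_Y$ by Proposition \ref{s2:p7}; this forces $Y=X$, which is $\mathrm{(ii)}$.

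For $\mathrm{(ii)}\Rightarrow\mathrm{(iii)}$, I would apply $\mathrm{(ii)}$ to the canonical extension $\mathbf{Y}:=e_{F_0}\mathbf{X}=\cl_{\mathbf{E}^{F_0}}(e_{F_0}[X])$, identifying $\mathbf{X}$ with the dense subspace $e_{F_0}[X]$ of the Hausdorff space $\mathbf{Y}$. The required extension property comes for free here: for $f\in F_0$ the restriction $\pi_f|_Y$ of the $f$-th projection is continuous and satisfies $\pi_f(e_{F_0}(x))=e_{F_0}(x)(f)=f(x)$ for $x\in X$, so it extends $f$. Hence $\mathrm{(ii)}$ gives $e_{F_0}[X]=\cl_{\mathbf{E}^{F_0}}(e_{F_0}[X])$, i.e. $e_{F_0}[X]$ is closed in $\mathbf{E}^{F_0}$, which is $\mathrm{(iii)}$.

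The only place where the classical argument silently invokes the axiom of choice is the formation of the family $\{\widetilde{\varphi_j}:j\in J\}$ of coordinate extensions in $\mathrm{(i)}\Rightarrow\mathrm{(ii)}$; the whole point of the $\mathbf{ZF}$ proof is that uniqueness of continuous extensions into a Hausdorff space over a dense set (Proposition \ref{s2:p7}) makes this family canonical, so that no choice is needed. I expect this to be the only delicate point: the density-and-closedness computation producing the retraction $r$ and the two applications of Proposition \ref{s2:p7} are otherwise routine, as is the choice-free extension property in $\mathrm{(ii)}\Rightarrow\mathrm{(iii)}$, where the extensions are literally coordinate projections.
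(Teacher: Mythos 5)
Your proposal is correct and follows essentially the same route as the paper: the same cycle $(iii)\Rightarrow(i)\Rightarrow(ii)\Rightarrow(iii)$, with the key choice-free step in $(i)\Rightarrow(ii)$ being exactly the paper's observation that Proposition \ref{s2:p7} makes each coordinate extension unique, hence the family of extensions canonical. The only cosmetic difference is that you spell out the projection-restriction argument in $(ii)\Rightarrow(iii)$, which the paper leaves implicit.
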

\begin{proof} $(i)\rightarrow (ii)$ Suppose that there exist a non-empty set $J$ and a homeomorphic embedding $h$ of $\mathbf{X}$ into  $\mathbf{E}^{J}$ such that $h[X]$ is a closed subset of $\mathbf{E}^{J}$. Let $\mathbf{Y}$ be a Hausdorff space such that $\mathbf{X}$ is a dense subspace of $\mathbf{Y}$ and, for every $f\in C(\mathbf{X},\mathbf{E})$, there exists  $\tilde{f}\in C(\mathbf{Y},\mathbf{E})$ with $\tilde{f}|_X=f$. For every $j\in J$, there exists a unique $h_j\in C(\mathbf{Y}, \mathbf{E})$ such that, for every $x\in X$, $h_j(x)=\pi_j\circ h(x)$. Let $\mathcal{H}=\{h_j: j\in J\}$. Then $e_{\mathcal{H}}\in C(\mathbf{Y}, \mathbf{E}^J)$ and $e_{\mathcal{H}}[Y]\subseteq h[X]$. Therefore, we can consider the mapping $h^{-1}\circ e_{\mathcal{H}}: Y\to X$. Since, for every $x\in X$, $h^{-1}(e_{\mathcal{H}}(x))=x$, it follows from Proposition \ref{s2:p7} that, for every $y\in Y$,  $h^{-1}(e_{\mathcal{H}}(y))=y$. This implies that $Y=X$. Hence (i) implies (ii).
	
	$(ii)\rightarrow (iii)$ Now, let $J=C(\mathbf{X}, \mathbf{E})$ and let $\mathbf{Y}$ be the subspace of $\mathbf{E}^{J}$ with the underlying set $Y=\cl_{\mathbf{E}^J}e_J[X]$. It follows from (ii) that $Y=e_J[X]$. This shows that (ii) implies (iii). 
	
	It is obvious that (iii) implies (i).
\end{proof}

We need the following theorem relevant to \cite[Theorem 3.11.16]{en}. We outline a proof of it in $\mathbf{ZF}$ because it differs from the proof of Theorem 3.11.16 given in \cite{en}. Our method is similar to that in \cite[proof of Theorem 2.1]{mr2}. One can also notice its similarity with \cite[proof of Fact 6]{nik}; however, the axiom of choice is involved in  \cite[proof of Fact 6]{nik} and, therefore, we must modify it to get a proof in $\mathbf{ZF}$. 

\begin{theorem}
	\label{s2:t9}
	$[\mathbf{ZF}]$
	Let $\mathbf{E}$ be a non-empty Hausdorff space, and $\mathbf{X}$ an $\mathbf{E}$-completely regular space. Then $e_{C(\mathbf{X}, \mathbf{E})}\mathbf{X}$ is a Hewitt $\mathbf{E}$-compact extension of $\mathbf{X}$. Furthermore, for every Hewitt $\mathbf{E}$-compact extension $\langle \mathbf{Y}, h\rangle$ of $\mathbf{X}$, it holds that $\langle\mathbf{Y}, h\rangle\approx e_{C(\mathbf{X}, \mathbf{E})}\mathbf{X}$ and, for every $\mathbf{E}$-compact space $\mathbf{Z}$ and every $f\in C(\mathbf{X},\mathbf{Z})$, there exists a unique $\tilde{f}\in C(\mathbf{Y}, \mathbf{Z})$ such that $\tilde{f}\circ h=f$.  
\end{theorem}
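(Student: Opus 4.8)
The plan is to split the proof into three parts corresponding to the three assertions of the theorem.

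First I would show that $e_{C(\mathbf{X}, \mathbf{E})}\mathbf{X}$ is a Hewitt $\mathbf{E}$-compact extension of $\mathbf{X}$. Write $J=C(\mathbf{X}, \mathbf{E})$ and $e=e_J$. Since $\mathbf{X}$ is $\mathbf{E}$-completely regular, Remark \ref{s2:r6} gives $J\in\mathcal{E}(\mathbf{X}, \mathbf{E})$, so $e$ is a homeomorphic embedding and $e_J\mathbf{X}=\cl_{\mathbf{E}^J}(e[X])$ is an extension of $\mathbf{X}$. It is a closed subspace of $\mathbf{E}^J$, hence $\mathbf{E}$-compact by definition. For the extension property, given $f\in C(\mathbf{X}, \mathbf{E})=J$, the projection $\pi_f\colon \mathbf{E}^J\to\mathbf{E}$ restricted to $e_J\mathbf{X}$ is a continuous map $\tilde{f}$ satisfying $\tilde{f}\circ e=\pi_f\circ e=f$ by the very definition of the evaluation map. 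Thus $e_J\mathbf{X}$ satisfies Definition \ref{s2:d4}.

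Next I would establish the universal mapping property, which is really the engine of the whole statement: for every $\mathbf{E}$-compact space $\mathbf{Z}$ and every $f\in C(\mathbf{X},\mathbf{Z})$ there is a unique $\tilde{f}\in C(e_J\mathbf{X}, \mathbf{Z})$ with $\tilde{f}\circ e=f$. Uniqueness is immediate from Proposition \ref{s2:p7}, since $e[X]$ is dense in $e_J\mathbf{X}$ and $\mathbf{Z}$ is Hausdorff (being $\mathbf{E}$-compact with $\mathbf{E}$ Hausdorff). For existence I would fix an embedding of $\mathbf{Z}$ as a closed subspace of some $\mathbf{E}^K$; then $f$ together with the coordinate maps gives a family $\{g_k=\pi_k\circ(\text{embedding})\circ f : k\in K\}\subseteq C(\mathbf{X},\mathbf{E})=J$. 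The point is that each $g_k$ is itself a coordinate of $\mathbf{E}^J$, so the map $e_J\mathbf{X}\to\mathbf{E}^K$ sending a point to its coordinates indexed by the $g_k$ is simply a restriction of a projection $\mathbf{E}^J\to\mathbf{E}^K$, hence continuous with no choice needed. I would then check that this map carries $e_J\mathbf{X}$ into the closed copy of $Z$ inside $\mathbf{E}^K$ (it does so on the dense set $e[X]$ and the copy of $Z$ is closed), and that composing with the homeomorphism back to $\mathbf{Z}$ yields the desired $\tilde{f}$. This avoidance of choice — obtaining the extended map as a genuine restriction of a projection rather than by extending coordinate-by-coordinate via arbitrary choices — is exactly the modification of Mr\'owka's and Nikiel's arguments that makes the proof go through in $\mathbf{ZF}$.

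Finally I would deduce uniqueness up to equivalence of the Hewitt extension. Let $\langle\mathbf{Y}, h\rangle$ be any Hewitt $\mathbf{E}$-compact extension of $\mathbf{X}$. Applying the universal property just proved, with $\mathbf{Z}=e_J\mathbf{X}$ (which is $\mathbf{E}$-compact) and $f=e\in C(\mathbf{X}, e_J\mathbf{X})$, and using that $\mathbf{Y}$ extends every $g\in C(\mathbf{X}, \mathbf{E})$ by Definition \ref{s2:d4}, I obtain a continuous map $\varphi\colon\mathbf{Y}\to e_J\mathbf{X}$ with $\varphi\circ h=e$; symmetrically the extension property of $e_J\mathbf{X}$ together with Theorem \ref{s2:t8}$(ii)$ (applied to $\mathbf{Y}$, whose $\mathbf{E}$-compactness forces it to coincide with the closure of its dense copy of $X$) yields a continuous $\psi\colon e_J\mathbf{X}\to\mathbf{Y}$ with $\psi\circ e=h$. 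Then $\psi\circ\varphi$ and $\id_{\mathbf{Y}}$ agree on the dense set $h[X]$, as do $\varphi\circ\psi$ and $\id$ on $e[X]$, so Proposition \ref{s2:p7} forces $\varphi,\psi$ to be mutually inverse homeomorphisms; since $\varphi\circ h=e$ this gives $\langle\mathbf{Y}, h\rangle\approx e_J\mathbf{X}$, and transporting the universal property through this equivalence finishes the statement for $\langle\mathbf{Y}, h\rangle$. The main obstacle throughout is making sure every extended map is constructed canonically as a restriction of a projection, so that no appeal to $\mathbf{AC}$ (or even countable choice) sneaks in through a simultaneous choice of extensions of infinitely many coordinate functions.
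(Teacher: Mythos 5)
Your proof is correct, and the $\mathbf{ZF}$-saving devices are the right ones; but the decomposition differs from the paper's. You prove the universal mapping property first, for the canonical model $e_J\mathbf{X}$, where the extended map really is a restriction of a reindexing map $\mathbf{E}^J\to\mathbf{E}^K$ and no extension step is needed at all; you then derive $\langle\mathbf{Y},h\rangle\approx e_J\mathbf{X}$ by the categorical ``two mutually inverse arrows agreeing on a dense set'' argument and transport the universal property across. The paper instead builds the equivalence directly: since $f\mapsto f\circ h$ is a bijection of $C(\mathbf{Y},\mathbf{E})$ onto $C(\mathbf{X},\mathbf{E})$ (surjective by the Hewitt property, injective by Proposition \ref{s2:p7}), it induces a homeomorphism $\psi$ of the ambient products $\mathbf{E}^{C(\mathbf{X},\mathbf{E})}\to\mathbf{E}^{C(\mathbf{Y},\mathbf{E})}$ carrying $e_{C(\mathbf{X},\mathbf{E})}X$ onto $e_{C(\mathbf{Y},\mathbf{E})}[Y]$, and then proves the universal property directly on $\mathbf{Y}$ by assembling the \emph{unique} extensions $\tilde{f}_j$ of the coordinate functions $f_j=\pi_j\circ e_{C(\mathbf{Z},\mathbf{E})}\circ f$. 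Your route is slightly longer but makes the choice-free core more transparent; the paper's is shorter because the equivalence does not pass through the universal property. One point to tighten: your map $\varphi\colon\mathbf{Y}\to e_J\mathbf{X}$ is \emph{not} obtained from ``the universal property just proved'' (that property concerns maps \emph{out of} $e_J\mathbf{X}$), nor is it a restriction of a projection; it is assembled from the extensions over $\mathbf{Y}$ of the coordinate functions $g\in C(\mathbf{X},\mathbf{E})$, and the reason this family exists without any appeal to choice is that each extension is \emph{unique} by Proposition \ref{s2:p7} --- exactly the device the paper uses. With that justification made explicit, your argument is complete.
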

\begin{proof}
It is obvious that $e_{C(\mathbf{X}, \mathbf{E})}\mathbf{X}$ is a Hewitt $\mathbf{E}$-compact extension of $\mathbf{X}$. Let $\langle\mathbf{Y}, h\rangle$ be any Hewitt $\mathbf{E}$-compact extension of $\mathbf{X}$. By  Remark \ref{s2:r6} and Theorem \ref{s2:t8}, $e_{C(\mathbf{Y}, \mathbf{E})}$ is a homeomorphic embedding such that $e_{C(\mathbf{Y},\mathbf{E})}[Y]$ is closed in $\mathbf{E}^{C(\mathbf{Y}, \mathbf{E})}$. We define a homeomorphism $\psi: \mathbf{E}^{C(\mathbf{X}, \mathbf{E})}\to \mathbf{E}^{C(\mathbf{Y},\mathbf{E})}$ as follows: for every $t\in E^{C(\mathbf{X}, \mathbf{E})}$ and every $f\in C(\mathbf{Y}, \mathbf{E})$, $\psi(t)(f)= t(f\circ h)$. Let $g: e_{C(\mathbf{X}, \mathbf{E})}X\to Y$ be defined by: for every $t\in e_{C(\mathbf{X}, \mathbf{E})}X$, $g(t)=e_{C(\mathbf{Y},\mathbf{E})}^{-1}\circ \psi(t)$. Then $g$ is the required homeomorphism of $e_{C(\mathbf{X}, \mathbf{E})}\mathbf{X}$ onto $\mathbf{Y}$ showing that $e_{C(\mathbf{X}, \mathbf{E})}\mathbf{X}\approx\langle \mathbf{Y}, h\rangle$.
	
	Now, let $\mathbf{Z}$ be an $\mathbf{E}$-compact space and let $f\in C(\mathbf{X},\mathbf{Z})$. For every $j\in C(\mathbf{Z}, \mathbf{E})$, let $f_j=\pi_j\circ e_{C(\mathbf{Z}, \mathbf{E})}\circ f$  and let $\tilde{f}_j\in C(\mathbf{Y}, \mathbf{E})$ be the unique continuous extension of $f_j$ over $\mathbf{Y}$. We define $\tilde{f}: Y\to Z$ as follows: for every $y\in Y$, $\tilde{f}(y)=e_{C(\mathbf{Z},\mathbf{E})}^{-1}\circ e_{\{\tilde{f}_j: j\in C(\mathbf{Z}, \mathbf{E})\}}$ (see \cite[Lemma 3.11.15]{en}). Then $\tilde{f}\in C(\mathbf{Y},\mathbf{Z})$ and $\tilde{f}\circ h=f$.
\end{proof}

\begin{remark}
	\label{s2:r10}
	Suppose that $\mathbf{X}$ and $\mathbf{E}$ are Hausdorff spaces such that $\mathbf{X}$ is $\mathbf{E}$-completely regular. It follows from Theorem \ref{s2:t9} that $e_{C(\mathbf{X},\mathbf{E})}\mathbf{X}$ is the unique (up to the equivalence $\approx$) Hewitt $\mathbf{E}$-compact extension of $\mathbf{X}$. We denote it by $v_{\mathbf{E}}\mathbf{X}$. 
	
For a Tychonoff space $\mathbf{X}$, $v_{\mathbb{R}}\mathbf{X}$ is traditionally denoted by $v\mathbf{X}$. For a zero-dimensional $T_1$-space $\mathbf{X}$, the extension $v_{\mathbb{N}}\mathbf{X}$ is commonly denoted by $v_0\mathbf{X}$. 
\end{remark} 

\begin{remark}
\label{s2:r11}
a) Suppose that $\mathbf{X}$ is a zero-dimensional $T_1$-space. Then, for the discrete space $\mathbf{2}=\langle \{0,1\}, \mathcal{P}(\{0, 1\})\rangle$, the space $\mathbf{X}$ is $\mathbf{2}$-completely regular but $v_{\mathbf{2}}\mathbf{X}$ may fail to be compact in $\mathbf{ZF}$ (see  \cite{kw1}, \cite{ow1} and \cite{ow2}). It is true in $\mathbf{ZF}$ that if $v_{\mathbf{2}}\mathbf{X}$ is compact, then $v_{\mathbf{2}}\mathbf{X}$ is equivalent to the Banaschewski compactification $\beta_0\mathbf{X}$ of $\mathbf{X}$ (see \cite{ow1} and \cite{ow2}). It was proved in \cite{ow1} and \cite{ow2} that it holds in $\mathbf{ZF}$ that the Banaschewski compactification of $\mathbf{X}$ exists if and only if $v_{\mathbf{2}}\mathbf{X}$ is compact. 

(b) Suppose that $\mathbf{X}$ is a Tychonoff space. Then $v_{[0,1]}\mathbf{X}$ may fail to be compact in $\mathbf{ZF}$, but it is true in $\mathbf{ZF}$ that if $v_{[0,1]}\mathbf{X}$ is compact, then it is equivalent to the maximal Tychonoff compactification of $\mathbf{X}$ (see \cite{kw1}, \cite{ow1} and \cite{ow2}).

(c) $\mathbf{BPI}$ is equivalent to each of the following statements: (i) for every Tychonoff space $\mathbf{X}$, $v_{[0,1]}\mathbf{X}$ is compact, (ii) for every zero-dimensional $T_1$-space $\mathbf{X}$, $v_{\mathbf{2}}\mathbf{X}$ is compact. (Cf. \cite{kw1}, \cite{ow1} and \cite{ow2}).
\end{remark}

Fact 4 of \cite{nik} asserts that a Tychonoff space $\mathbf{X}$ is realcompact if and only if the embedding $e_{C(\mathbf{X})}$ of $\mathbf{X}$ into $\mathbb{R}^{C(\mathbf{X})}$ is closed. Unfortunately, the axiom of choice is used in the proof of Fact 4 in \cite{nik}. However, the following stronger and more general theorem  of $\mathbf{ZF}$ follows directly from our Theorem \ref{s2:t9}.

\begin{theorem}
\label{s2:t12}
$[\mathbf{ZF}]$ Let $\mathbf{E}$ be a non-empty Hausdorff space and let $\mathbf{X}$ be an $\mathbf{E}$-completely regular space. Then $\mathbf{X}$ is $\mathbf{E}$-compact if and only if the mapping $e_{C(\mathbf{X},\mathbf{E})}$ is closed.
\end{theorem}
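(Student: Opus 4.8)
The plan is to reduce the statement to the equivalence $(i)\Leftrightarrow(iii)$ of Theorem \ref{s2:t8} by means of one elementary, choice-free observation about homeomorphic embeddings. Write $F=C(\mathbf{X},\mathbf{E})$ and $e=e_F$ for brevity. Since $\mathbf{X}$ is $\mathbf{E}$-completely regular, Remark \ref{s2:r6} guarantees that $F\in\mathcal{E}(\mathbf{X},\mathbf{E})$, i.e. $e\colon X\to E^{F}$ is a homeomorphic embedding; in particular $e$ is continuous, injective, and restricts to a homeomorphism of $\mathbf{X}$ onto the subspace $e[X]$ of $\mathbf{E}^{F}$. The whole content of the phrase ``$e$ is a closed mapping'' will therefore collapse to a statement about the single set $e[X]$.

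First I would prove, in $\mathbf{ZF}$, the auxiliary fact that for any homeomorphic embedding $e\colon X\to Z$ one has: $e$ is a closed mapping if and only if $e[X]$ is closed in $Z$. The forward implication is immediate, since $X$ is closed in itself, so $e[X]$ is a closed image. For the converse, assume $e[X]$ is closed in $Z$ and let $C\subseteq X$ be closed; because $e$ carries $\mathbf{X}$ homeomorphically onto $e[X]$, the image $e[C]$ is closed in the subspace $e[X]$, whence $e[C]=G\cap e[X]$ for some set $G$ closed in $Z$, and $e[C]$ is then closed in $Z$ as an intersection of two closed sets. No form of choice enters this argument.

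Finally I would feed this into Theorem \ref{s2:t8}. Applying the auxiliary fact with $Z=\mathbf{E}^{F}$ shows that $e$ is a closed mapping if and only if $e[X]=e_{C(\mathbf{X},\mathbf{E})}[X]$ is closed in $\mathbf{E}^{C(\mathbf{X},\mathbf{E})}$, which is exactly condition (iii) of Theorem \ref{s2:t8}. Since Theorem \ref{s2:t8} establishes, in $\mathbf{ZF}$, the equivalence of (iii) with (i), namely the $\mathbf{E}$-compactness of $\mathbf{X}$, the desired biconditional follows at once. One could alternatively route the argument through Theorem \ref{s2:t9}, using that $e_{C(\mathbf{X},\mathbf{E})}\mathbf{X}=\cl_{\mathbf{E}^{F}}(e[X])$ is the Hewitt $\mathbf{E}$-compact extension $v_{\mathbf{E}}\mathbf{X}$ and that an $\mathbf{E}$-compact space coincides with its own such extension; but the passage via Theorem \ref{s2:t8} is the most direct. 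I do not anticipate a genuine obstacle here: the only point that deserves careful statement is the precise meaning of ``closed mapping'' for an embedding, which is exactly what the auxiliary fact pins down, and every step remains within $\mathbf{ZF}$.
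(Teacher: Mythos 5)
Your proposal is correct and matches the paper's intent: the paper gives no written proof, merely asserting that the theorem ``follows directly from Theorem \ref{s2:t9}'', and the intended derivation is exactly the one you spell out, namely that for a homeomorphic embedding ``closed mapping'' reduces to ``closed image'', which is condition (iii) of Theorem \ref{s2:t8} (equivalently, the statement that $\mathbf{X}$ coincides with $v_{\mathbf{E}}\mathbf{X}=\cl_{\mathbf{E}^{F}}(e_F[X])$ from Theorem \ref{s2:t9}). Your explicit, choice-free auxiliary fact about embeddings is the only detail the paper leaves unsaid, and it is handled correctly.
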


\begin{definition}
\label{s2:d13}
Let $\mathbf{X}=\langle X, \tau\rangle$ be a locally compact, non-compact Hausdorff space, and let $\infty$ be an element such that $\infty\notin X$. We define $X(\infty):=X\cup\{\infty\}$, $\tau(\infty):=\tau\cup\{X(\infty)\setminus K: K\ \text{is compact in}\ \mathbf{X}\}$ and $\mathbf{X}(\infty):=\langle X(\infty), \tau(\infty)\rangle$. Then $\mathbf{X}(\infty)$ is the \emph{Alexandroff compactification} of $\mathbf{X}$.
\end{definition}

In much the same way, as in \cite[proof of Corollary 3.11.7]{en}, one can prove the following proposition.

\begin{proposition}
	\label{s2:p14}
	$[\mathbf{ZF}]$ Let $J$ be a non-empty set and $\mathbf{E}$ a non-empty Hausdorff space. Suppose that $\{A_j: j\in J\}$ is a family of $\mathbf{E}$-compact subspaces of a topological space $\mathbf{X}$.  Then the subspace $A=\bigcap_{j\in J}A_j$ of $\mathbf{X}$ is $\mathbf{E}$-compact.
\end{proposition}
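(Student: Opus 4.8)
The plan is to follow the classical strategy of Engelking's Corollary 3.11.7: realize $A$ as a closed subspace of the product $\mathbf P:=\prod_{j\in J}A_j$ via the diagonal, use that this product is $\mathbf E$-compact, and invoke that a closed subspace of an $\mathbf E$-compact space is $\mathbf E$-compact. The point is to run this argument without the axiom of choice. Two preliminary observations are choice-free and will be used throughout. First, a closed subspace of an $\mathbf E$-compact space is $\mathbf E$-compact: if $\mathbf Z$ is mapped by a single homeomorphism $\theta$ onto a closed subset of $\mathbf E^K$ and $F$ is closed in $\mathbf Z$, then $\theta[F]$ is closed in $\mathbf E^K$, so $F$ is $\mathbf E$-compact; only the one homeomorphism $\theta$ is used, so no choice is needed. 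Second, fixing $j_0\in J$ (using $J\neq\emptyset$), the subspace $A\subseteq A_{j_0}$ is $\mathbf E$-completely regular, being a subspace of the $\mathbf E$-compact, hence $\mathbf E$-completely regular, space $A_{j_0}$.

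The central step is to prove in $\mathbf{ZF}$ that $\mathbf P=\prod_{j\in J}A_j$ is $\mathbf E$-compact. In $\mathbf{ZFC}$ one would choose, for every $j$, some closed embedding of $A_j$ into a power of $\mathbf E$ and take the product of these; this simultaneous choice is exactly what can fail in $\mathbf{ZF}$. To avoid it I would use the \emph{canonical} embedding attached to each $A_j$: by Remark \ref{s2:r6} the evaluation map $e_{C(A_j,\mathbf E)}$ is a homeomorphic embedding, and since $A_j$ is $\mathbf E$-compact, Theorem \ref{s2:t8} guarantees that its image is closed in $\mathbf E^{C(A_j,\mathbf E)}$. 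These maps are attached to the $A_j$ without any choice, so the product map $\prod_{j\in J}e_{C(A_j,\mathbf E)}\colon \mathbf P\to \mathbf E^{\,\bigsqcup_{j\in J}C(A_j,\mathbf E)}$ is canonically defined, is a homeomorphic embedding, and has image $\prod_{j\in J}e_{C(A_j,\mathbf E)}[A_j]=\bigcap_{j\in J}\pi_j^{-1}\bigl(e_{C(A_j,\mathbf E)}[A_j]\bigr)$, an intersection of closed sets and hence closed. Thus $\mathbf P$ is $\mathbf E$-compact in $\mathbf{ZF}$.

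Finally I would embed $A$ by the diagonal $d\colon A\to\mathbf P$, $d(x)_j=x$, which is legitimate since $x\in A\subseteq A_j$ for every $j$; note that no choice is needed to populate $\mathbf P$, because each $x\in A$ supplies the point $d(x)$. Each coordinate $\pi_j\circ d$ is the inclusion $A\hookrightarrow A_j$, a homeomorphic embedding by transitivity of the subspace topology, so $d$ is a homeomorphic embedding with continuous inverse $\pi_{j_0}\upharpoonright d[A]$. To see that $d[A]$ is closed in $\mathbf P$, observe that, since $A=\bigcap_{j}A_j$, the constraint $x\in A$ is automatic, so $d[A]=\bigcap_{j,k\in J}(\pi_j,\pi_k)^{-1}[D_{jk}]$, where $D_{jk}=\{(u,v)\in A_j\times A_k: u=v \text{ in } X\}$ is the preimage of the diagonal of $\mathbf X$ under $A_j\times A_k\hookrightarrow X\times X$; as the $A_j$ and $\mathbf X$ are Hausdorff ($\mathbf E$ being Hausdorff), this diagonal is closed, whence each $D_{jk}$ and therefore $d[A]$ is closed. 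Then $A\cong d[A]$ is a closed subspace of the $\mathbf E$-compact space $\mathbf P$, so it is $\mathbf E$-compact by the first preliminary observation. I expect the product step to be the main obstacle, since that is precisely where the classical argument invokes the axiom of choice, and the substitution of the canonical evaluation embeddings furnished by Theorem \ref{s2:t8} is what makes the whole argument go through in $\mathbf{ZF}$.
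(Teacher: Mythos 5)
Your overall route --- embed $A$ diagonally as a closed subspace of $\prod_{j\in J}A_j$, and obtain $\mathbf{E}$-compactness of that product in $\mathbf{ZF}$ by replacing the simultaneous choices of closed embeddings with the canonical evaluation maps $e_{C(A_j,\mathbf{E})}$, whose images are closed by Theorem \ref{s2:t8} --- is exactly the intended adaptation of Engelking's Corollary 3.11.7 that the paper points to. The product step is correct, including the observation that injectivity of each $e_{C(A_j,\mathbf{E})}$ makes the image of the product map equal to $\prod_{j}e_{C(A_j,\mathbf{E})}[A_j]$ with no choice needed, and the reduction ``closed subspace of $\mathbf{E}$-compact is $\mathbf{E}$-compact'' is choice-free as you say.

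The gap is in the closedness of $d[A]$. You justify it by asserting that $\mathbf{X}$ is Hausdorff ``($\mathbf{E}$ being Hausdorff)'', but that inference is wrong: Hausdorffness of $\mathbf{E}$ makes each $A_j$ (and hence $A$) Hausdorff, whereas $\mathbf{X}$ is an arbitrary topological space in the statement. For $j\neq k$ the set $D_{jk}=\{(u,v)\in A_j\times A_k: u=v\}$ is the trace on $A_j\times A_k$ of the diagonal of $\mathbf{X}$, and its closedness requires separating a point $u\in A_j$ from a distinct point $v\in A_k$ by open sets of $\mathbf{X}$ whose traces on $A_j\cap A_k$ are disjoint; this needs Hausdorffness of $\mathbf{X}$ (or at least of $\bigcup_{j\in J}A_j$), not of the $A_j$. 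The step genuinely fails without it, and so does the proposition as literally stated: in $\mathbf{ZFC}$, let $\mathbf{X}$ be two copies of the ordinal space $\omega_1+1$ glued along $\omega_1$ with the two top points kept distinct; then $A_1$ and $A_2$ are compact Hausdorff zero-dimensional (hence realcompact and $\mathbb{N}$-compact) subspaces whose intersection is $\omega_1$, which is neither. Your argument is therefore complete exactly when $\mathbf{X}$ (or merely $\bigcup_{j\in J}A_j$) is Hausdorff --- which holds in every application the paper makes of this proposition --- but as written the Hausdorffness claim for $\mathbf{X}$ is a genuine error, and the closedness of $d[A]$ cannot be repaired without adding that hypothesis.
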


The proof of the following proposition is similar to the proof of Corollary 3.11.8 in \cite{en}.

\begin{proposition}
	\label{s2:p15}
	$[\mathbf{ZF}]$ Let $\mathbf{E}$ be a non-empty Hausdorff space. Let $\mathbf{X}$, $\mathbf{Y}$ be Hausdorff spaces and let $f\in C(\mathbf{X}, \mathbf{Y})$. If $\mathbf{X}$ is  $\mathbf{E}$-compact, then, for every $\mathbf{E}$-compact subspace $B$ of $\mathbf{Y}$, the subspace $f^{-1}[B]$ of $\mathbf{X}$ is $\mathbf{E}$-compact. 
\end{proposition}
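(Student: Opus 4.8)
\textbf{Proof proposal for Proposition \ref{s2:p15}.}

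The plan is to reduce the statement to the intersection result of Proposition \ref{s2:p14} combined with Theorem \ref{s2:t12}, avoiding any appeal to choice. First I would record the key structural fact I intend to exploit: if $\mathbf{X}$ is $\mathbf{E}$-compact, then by Theorem \ref{s2:t12} the embedding $e_{C(\mathbf{X},\mathbf{E})}$ realizes $\mathbf{X}$ as a closed subspace of $\mathbf{E}^{C(\mathbf{X},\mathbf{E})}$. The target set $f^{-1}[B]$ should be expressed as an intersection of $\mathbf{E}$-compact subspaces of $\mathbf{X}$, so that Proposition \ref{s2:p14} applies directly.

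The heart of the argument is the following observation. Since $B$ is an $\mathbf{E}$-compact subspace of $\mathbf{Y}$, Theorem \ref{s2:t12} gives that $e_{C(\mathbf{B},\mathbf{E})}[B]$ is closed in $\mathbf{E}^{C(\mathbf{B},\mathbf{E})}$; equivalently, $B=\bigcap_{g}g^{-1}[\cl_{\mathbf{E}}(g[B])]$ where $g$ ranges over the maps in $C(\mathbf{B},\mathbf{E})$ witnessing this closedness. More precisely, I would use that $B$, being $\mathbf{E}$-compact, is an intersection of preimages (under maps into $\mathbf{E}$) of closed subsets of $\mathbf{E}$ inside any $\mathbf{E}$-completely regular space containing it. Each such map into $\mathbf{E}$ composes with $f$ to yield a member of $C(\mathbf{X},\mathbf{E})$, and the corresponding preimage in $\mathbf{X}$ is a closed, hence $\mathbf{E}$-compact (as a closed subspace of the $\mathbf{E}$-compact space $\mathbf{X}$), subspace. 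Writing $f^{-1}[B]$ as the intersection over all these maps of the pullback closed sets and invoking Proposition \ref{s2:p14} then finishes the proof. I would also need the auxiliary fact that a closed subspace of an $\mathbf{E}$-compact space is $\mathbf{E}$-compact, which follows in $\mathbf{ZF}$ directly from the definition of $\mathbf{E}$-compactness (a closed subset of a closed subspace of $\mathbf{E}^J$ is closed in $\mathbf{E}^J$).

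The main obstacle I anticipate is formulating the representation of $B$ as an intersection of preimages in a way that is fully constructive and does not quietly select witnessing functions via choice. The index set is $C(\mathbf{B},\mathbf{E})$ (or its image under the canonical correspondence with a subfamily of $C(\mathbf{X},\mathbf{E})$ after composing with $f$), which is a concretely defined set, so no choice is needed to form the intersection over \emph{all} of it simultaneously; the subtlety is only in verifying that this total intersection equals $f^{-1}[B]$ and not something larger. I would verify the nontrivial inclusion by a pointwise argument: if $x\in X$ satisfies $g(f(x))\in\cl_{\mathbf{E}}(g[B])$ for every $g\in C(\mathbf{B},\mathbf{E})$, then $f(x)$ lies in the closure-intersection defining $B$, which equals $B$ by the closedness from Theorem \ref{s2:t12}, hence $x\in f^{-1}[B]$. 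This keeps the whole argument inside $\mathbf{ZF}$.
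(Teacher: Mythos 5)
There is a genuine gap at the heart of your argument: the claimed representation of $B$ (and hence of $f^{-1}[B]$) as an intersection of preimages of closed subsets of $\mathbf{E}$ is false, under either reading of which maps you are indexing over. If $g$ ranges over $C(\mathbf{B},\mathbf{E})$, then $g$ is defined only on $B$, so the expression $g(f(x))$ for $x\in X$ with $f(x)\notin B$ is meaningless, and $g\circ f$ is an element of $C(f^{-1}[B],\mathbf{E})$, not of $C(\mathbf{X},\mathbf{E})$; there is no reason such a $g$ extends over $\mathbf{Y}$ or over $\mathbf{X}$. If instead $g$ ranges over globally defined maps (in $C(\mathbf{Y},\mathbf{E})$ or $C(\mathbf{X},\mathbf{E})$), then each set $g^{-1}[C]$ with $C$ closed in $\mathbf{E}$ is closed in the ambient space, so the total intersection contains the closure of $B$ (respectively of $f^{-1}[B]$) and is in general strictly larger: take $\mathbf{E}=\mathbf{Y}=\mathbb{R}$ and $B=(0,1)$, which is realcompact but whose every closed superset of this form contains $[0,1]$. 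The closedness of $e_{C(\mathbf{B},\mathbf{E})}[B]$ in $\mathbf{E}^{C(\mathbf{B},\mathbf{E})}$ from Theorem \ref{s2:t12} is a statement internal to $B$ and does not transfer to an intersection formula inside $\mathbf{Y}$. (The paper does use an intersection formula of this flavour in the proof of Theorem \ref{s2:t17}, but there the preimages are of the \emph{open} set $\mathbb{R}$ in $\mathbb{R}(\infty)$ under maps defined on the whole extension $v_{\mathbb{R}(\infty)}\mathbf{X}$, and establishing it is precisely the content of that theorem, which itself relies on Propositions \ref{s2:p14} and \ref{s2:p15}; using it here would be circular.)

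The paper's intended proof, following Corollary 3.11.8 of \cite{en}, avoids extending functions altogether by using the graph trick: the map $x\mapsto\langle x, f(x)\rangle$ is a homeomorphism of $f^{-1}[B]$ onto the graph of $f\upharpoonright f^{-1}[B]$, which is closed in $\mathbf{X}\times\mathbf{B}$ because $\mathbf{Y}$ is Hausdorff; the product $\mathbf{X}\times\mathbf{B}$ is $\mathbf{E}$-compact since $\mathbf{E}^{J_1}\times\mathbf{E}^{J_2}$ is canonically homeomorphic to $\mathbf{E}^{J_1\oplus J_2}$ and a product of two closed sets is closed; and a closed subspace of an $\mathbf{E}$-compact space is $\mathbf{E}$-compact (a fact you correctly note). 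All of this is choice-free. Your auxiliary observations (closed subspaces of $\mathbf{E}$-compact spaces are $\mathbf{E}$-compact; no choice is needed to intersect over a concretely defined index set) are fine, but the decomposition they are meant to serve does not exist, so the proof does not go through as written.
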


The following two theorems are given here for applications in the next section. We show a proof only of the second one for both can be proved by using similar arguments.

\begin{theorem}
\label{s2:t16}
$[\mathbf{ZF}]$ A zero-dimensional $T_1$-space $\mathbf{X}$ is $\mathbb{N}$-compact if and only if, for every $p\in v_{\mathbb{N}(\infty)}X\setminus X$, there exists $f\in C(v_{\mathbb{N}(\infty)}\mathbf{X}, \mathbb{N}(\infty))$ such that $f(p)=\infty$ and $f[X]\subseteq\mathbb{N}$.
\end{theorem}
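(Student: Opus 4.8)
The plan is to locate $\mathbf X$ inside $vX:=v_{\mathbb N(\infty)}\mathbf X$ and to recognise the natural ``finite-part'' subspace of $vX$ as a homeomorphic copy of the Hewitt $\mathbb N$-compact extension $v_{0}\mathbf X=v_{\mathbb N}\mathbf X$ (Remark~\ref{s2:r10}). Since $\mathbf X$ is zero-dimensional $T_{1}$, it is $\mathbb N$-completely regular, and since $\mathbb N$ is a subspace of $\mathbb N(\infty)$ it is also $\mathbb N(\infty)$-completely regular; thus, by Theorem~\ref{s2:t9}, both $vX$ and $v_{0}\mathbf X$ exist. As in Remark~\ref{s2:r3}(ii) I identify $\mathbf X$ with the dense subspace $e_{C(\mathbf X,\mathbb N(\infty))}[X]$ of $vX$, so a point $p\in vX$ is a function $p\colon C(\mathbf X,\mathbb N(\infty))\to\mathbb N(\infty)$ and, for $g\in C(\mathbf X,\mathbb N(\infty))$, the unique continuous extension of $g$ over $vX$ is $\tilde g=\pi_{g}\upharpoonright vX$, so $\tilde g(p)=p(g)$. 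Because $\mathbb N$ is open and discrete in $\mathbb N(\infty)$, a member of $C(\mathbf X,\mathbb N(\infty))$ has range in $\mathbb N$ exactly when it belongs to $C(\mathbf X,\mathbb N)$; hence, by Proposition~\ref{s2:p7}, the displayed condition asserts precisely that the set
\[Y:=\{p\in vX:\ p(g)\in\mathbb N\ \text{for every}\ g\in C(\mathbf X,\mathbb N)\}\]
is contained in $X$. As $X\subseteq Y$ always holds, the condition is equivalent to $Y=X$. On the other hand, Theorem~\ref{s2:t12} (with $\mathbf E=\mathbb N$) tells us that $\mathbf X$ is $\mathbb N$-compact if and only if $e_{C(\mathbf X,\mathbb N)}[X]$ is closed, i.e. if and only if $v_{0}\mathbf X=X$. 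It therefore suffices to exhibit a bijection of $Y$ onto $v_{0}\mathbf X$ fixing $X$ pointwise.

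To build the bridge between $\mathbb N(\infty)$-valued and $\mathbb N$-valued coordinates I would use clopen indicator functions. For $g\in C(\mathbf X,\mathbb N(\infty))$ and $k\in\mathbb N$ the set $g^{-1}[\{k\}]$ is clopen in $\mathbf X$, since $\{k\}$ is clopen in $\mathbb N(\infty)$; let $c^{g}_{k}\in C(\mathbf X,\mathbb N)$ take the value $1$ on $g^{-1}[\{k\}]$ and $2$ elsewhere. As the range of $c^{g}_{k}$ lies in the closed set $\{1,2\}$, one has $p(c^{g}_{k})\in\{1,2\}$ for every $p\in vX$; moreover the clopen sets $(\widetilde{c^{g}_{k}})^{-1}[\{1\}]$ and $(\tilde g)^{-1}[\{k\}]$ of $vX$ have the same trace on the dense set $X$ and hence coincide. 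Thus, for every $p\in vX$, $p(g)=k\Leftrightarrow p(c^{g}_{k})=1$, while $p(g)=\infty$ iff $p(c^{g}_{k})=2$ for all $k$; in particular $p(g)$ is recovered from the $\mathbb N$-coordinates $(p(c^{g}_{k}))_{k\in\mathbb N}$ by the continuous rule $\phi_{g}$ sending a $\{1,2\}$-valued sequence with a single $1$, at position $k$, to $k$, and the constant sequence $2$ to $\infty$. Writing $\rho$ for the projection of $\mathbb N(\infty)^{C(\mathbf X,\mathbb N(\infty))}$ onto $\mathbb N(\infty)^{C(\mathbf X,\mathbb N)}$, this shows that $\rho$ is injective on $vX$, that $\rho[Y]\subseteq\mathbb N^{C(\mathbf X,\mathbb N)}$, and, since $\rho$ carries $e_{C(\mathbf X,\mathbb N(\infty))}[X]$ onto $e_{C(\mathbf X,\mathbb N)}[X]$, that $\rho[Y]\subseteq\cl_{\mathbb N^{C(\mathbf X,\mathbb N)}}(e_{C(\mathbf X,\mathbb N)}[X])=v_{0}\mathbf X$, with $\rho$ fixing $X$.

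It remains to prove that $\rho\upharpoonright Y$ maps $Y$ onto $v_{0}\mathbf X$, and this surjectivity is the step I expect to be the main obstacle: it requires lifting an arbitrary $q\in v_{0}\mathbf X$ back into the closure that defines $vX$. Given such a $q$ I would set $p(g):=\phi_{g}(q)$ for each $g\in C(\mathbf X,\mathbb N(\infty))$; the requirement that at most one $k$ satisfies $q(c^{g}_{k})=1$ is a closed condition valid on $e_{C(\mathbf X,\mathbb N)}[X]$, hence on $v_{0}\mathbf X$, so $p$ is well defined, and a coordinatewise comparison as above yields $p(g)=q(g)$ for $g\in C(\mathbf X,\mathbb N)$, so that $\rho(p)=q$ and $p\in Y$ once $p\in vX$ is known. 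The crux is exactly this membership $p\in vX=\cl(e_{C(\mathbf X,\mathbb N(\infty))}[X])$: given finitely many coordinates $g_{1},\dots,g_{m}$ and basic neighbourhoods of the values $p(g_{i})$, I would translate each constraint into finitely many constraints on the gadget functions $c^{g_{i}}_{k}\in C(\mathbf X,\mathbb N)$ (demanding $c^{g_{i}}_{k}(x)=1$ when $p(g_{i})=k\in\mathbb N$, and $c^{g_{i}}_{k}(x)=2$ for $k\le N$ when $p(g_{i})=\infty$ and the neighbourhood is $\mathbb N(\infty)\setminus\{1,\dots,N\}$), and then use $q\in\cl(e_{C(\mathbf X,\mathbb N)}[X])$ to find $x\in X$ meeting them; such an $x$ places $e_{C(\mathbf X,\mathbb N(\infty))}(x)$ in the prescribed neighbourhood of $p$, so $p\in vX$.

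Granting this, $\rho\upharpoonright Y\colon Y\to v_{0}\mathbf X$ is a bijection fixing $X$ (indeed a homeomorphism, each $\phi_{g}$ being continuous), so $Y=X$ if and only if $v_{0}\mathbf X=X$; chaining this with the two reductions of the first paragraph gives that $\mathbf X$ is $\mathbb N$-compact if and only if $v_{0}\mathbf X=X$, if and only if $Y=X$, if and only if the displayed condition holds. Finally, the functions $c^{g}_{k}$, the recovery maps $\phi_{g}$, and the lift $p$ are all given by explicit formulas in the data at hand, so no form of choice is invoked and the whole argument stays within $\mathbf{ZF}$; the only genuinely delicate point is the density verification $p\in vX$, which works precisely because $\mathbb N(\infty)$ is zero-dimensional, its singletons $\{k\}$ ($k\in\mathbb N$) being clopen.
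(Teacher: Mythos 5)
Your argument is correct, and it is a genuinely different route from the paper's. The paper writes out only the proof of the parallel Theorem \ref{s2:t17} and declares Theorem \ref{s2:t16} analogous; that template proves the forward implication by contradiction via Theorem \ref{s2:t8}(ii) (a ``bad'' point $p$ yields the strictly larger dense extension $X\cup\{p\}$ over which every member of $C(\mathbf{X},\mathbb{N})$ extends, contradicting $\mathbb{N}$-compactness), and the converse by writing $X=\bigcap_{f\in\mathcal{A}}f^{-1}[\mathbb{N}]$ inside the $\mathbb{N}$-compact space $v_{\mathbb{N}(\infty)}\mathbf{X}$ and invoking Propositions \ref{s2:p14} and \ref{s2:p15}. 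You instead identify the ``finite part'' $Y$ of $v_{\mathbb{N}(\infty)}\mathbf{X}$ with $v_{0}\mathbf{X}$ via the coordinate projection $\rho$, using the clopen indicator functions $c^{g}_{k}$ both to see that $\rho$ is injective on $v_{\mathbb{N}(\infty)}X$ and to lift points of $v_{0}\mathbf{X}$ back into the defining closure, and then you reduce everything to Theorem \ref{s2:t12}. I checked the delicate steps: the equivalence of the displayed condition with $Y\subseteq X$ (via Proposition \ref{s2:p7} and $\tilde g=\pi_{g}\upharpoonright v_{\mathbb{N}(\infty)}X$), the ``equal traces of clopen sets on a dense set'' argument, and the density verification $p\in v_{\mathbb{N}(\infty)}X$ by translating basic neighbourhood constraints into finitely many conditions $c^{g_i}_{k}(x)\in\{1,2\}$ — all are sound and visibly choice-free. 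What each approach buys: the paper's is shorter and uniform in the target space $\mathbf{E}$, so the same proof handles Theorem \ref{s2:t17}; yours is more explicit and avoids Propositions \ref{s2:p14}--\ref{s2:p15} (and the fact that $\mathbb{N}(\infty)$ is $\mathbb{N}$-compact) entirely, but it exploits the clopenness of the fibres $g^{-1}[\{k\}]$ and therefore does not transfer to the realcompact analogue. As a minor remark, the final homeomorphism claim for $\rho\upharpoonright Y$ is not needed — bijectivity onto $v_{0}\mathbf{X}$ together with $\rho[e_{C(\mathbf{X},\mathbb{N}(\infty))}[X]]=e_{C(\mathbf{X},\mathbb{N})}[X]$ already gives $Y=X\Leftrightarrow v_{0}\mathbf{X}=X$.
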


\begin{theorem}
\label{s2:t17}
$[\mathbf{ZF}]$ A Tychonoff space $\mathbf{X}$ is realcompact if and only it, for every $p\in v_{\mathbb{R}(\infty)}X\setminus X$, there exists $f\in C(v_{\mathbb{R}(\infty)}\mathbf{X}, \mathbb{R}(\infty))$ such that $f(p)=\infty$ and $f[X]\subseteq\mathbb{R}$.
\end{theorem}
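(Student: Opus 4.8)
```latex
The plan is to prove Theorem \ref{s2:t17} by exploiting the extension property of the Hewitt $\mathbb{R}(\infty)$-compact extension $v_{\mathbb{R}(\infty)}\mathbf{X}$ together with the characterization of $\mathbf{E}$-compactness given in Theorem \ref{s2:t8}, specialized to $\mathbf{E}=\mathbb{R}(\infty)$ and connected to ordinary realcompactness (the case $\mathbf{E}=\mathbb{R}$) via Proposition \ref{s2:p15}. First I would note that since $\mathbf{X}$ is Tychonoff it is $\mathbb{R}$-completely regular, and since $\mathbb{R}$ embeds as a dense subspace of the Hausdorff space $\mathbb{R}(\infty)$, the space $\mathbf{X}$ is also $\mathbb{R}(\infty)$-completely regular; thus $v_{\mathbb{R}(\infty)}\mathbf{X}$ is a well-defined Hewitt $\mathbb{R}(\infty)$-compact extension by Theorem \ref{s2:t9}. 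Throughout I identify $\mathbf{X}$ with its dense image inside $v_{\mathbb{R}(\infty)}\mathbf{X}$ as in Remark \ref{s2:r3}(ii), writing $Y=v_{\mathbb{R}(\infty)}X$.

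For the forward implication, suppose $\mathbf{X}$ is realcompact, i.e.\ $\mathbb{R}$-compact, and let $p\in Y\setminus X$. The idea is to produce a continuous real-valued function on $\mathbf{X}$ whose extension over $Y$ must blow up at $p$. Consider any $g\in C(\mathbf{X},\mathbb{R})$; by the defining property of the Hewitt extension (Definition \ref{s2:d4}) applied to the inclusion $\mathbb{R}\hookrightarrow\mathbb{R}(\infty)$, $g$ extends to some $\tilde g\in C(\mathbf{Y},\mathbb{R}(\infty))$. The point is that $\tilde g(p)=\infty$ for a suitably chosen $g$: otherwise every such extension would take a finite real value at $p$, and I claim this would force $p\in X$, contradicting $p\in Y\setminus X$. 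Concretely, $f^{-1}[\mathbb{R}]$ is, by Proposition \ref{s2:p15} (with $B=\mathbb{R}$, an $\mathbb{R}$-compact hence realcompact subspace of $\mathbb{R}(\infty)$), an $\mathbb{R}$-compact subspace of $Y$ for each $f\in C(\mathbf{Y},\mathbb{R}(\infty))$; the set of points of $Y$ at which no extension escapes to $\infty$ is precisely $\bigcap f^{-1}[\mathbb{R}]$ over all relevant $f$, which by Proposition \ref{s2:p14} is an $\mathbb{R}$-compact subspace containing the dense set $X$, and since $\mathbf{X}$ is already $\mathbb{R}$-compact this intersection collapses to $X$ using the closedness characterization of Theorem \ref{s2:t12}. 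Hence any $p\notin X$ admits an $f$ with $f(p)=\infty$ and $f[X]\subseteq\mathbb{R}$.

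For the converse, assume the stated condition holds for every $p\in Y\setminus X$. I would invoke Theorem \ref{s2:t8}, condition (ii)$\to$(i): to show $\mathbf{X}$ is $\mathbb{R}$-compact it suffices to show that $X$ equals any Hausdorff space in which it sits densely and over which all members of $C(\mathbf{X},\mathbb{R})$ extend to $\mathbb{R}$-valued maps. Equivalently, I show directly that each point $p\in Y\setminus X$ cannot belong to the $\mathbb{R}$-compact reflection, by producing, via the given $f$ with $f(p)=\infty$ and $f[X]\subseteq\mathbb{R}$, a function in $C(\mathbf{X},\mathbb{R})$ whose unique continuous $\mathbb{R}$-valued extension fails to exist at $p$; then the intersection $\bigcap f^{-1}[\mathbb{R}]$ of $\mathbb{R}$-compact subspaces (Proposition \ref{s2:p14}) equals exactly $X$, so $e_{C(\mathbf{X})}[X]$ is closed in $\mathbb{R}^{C(\mathbf{X})}$ and $\mathbf{X}$ is realcompact by Theorem \ref{s2:t12}.

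The main obstacle will be the careful bookkeeping between the two base spaces $\mathbb{R}$ and $\mathbb{R}(\infty)$: I must verify in $\mathbf{ZF}$, without any choice, that the restriction $f\mapsto f\upharpoonright X$ sets up the correct correspondence between $C(\mathbf{Y},\mathbb{R}(\infty))$ and $C(\mathbf{X},\mathbb{R}(\infty))$ and that the $\mathbb{R}$-compactness of the preimages $f^{-1}[\mathbb{R}]$ genuinely pins down $X$ as their intersection. The uniqueness of extensions guaranteed by Proposition \ref{s2:p7} (density of $X$ in $Y$ and Hausdorffness of $\mathbb{R}(\infty)$) is what keeps this correspondence well-defined without choice, and Proposition \ref{s2:p14} supplies the closure under arbitrary intersections that lets the argument go through for an index set of arbitrary cardinality. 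I expect the subtle point to be confirming that $\bigcap_{f} f^{-1}[\mathbb{R}]$ is not merely $\mathbb{R}$-compact but actually coincides with $X$; here the realcompactness hypothesis in the forward direction, and the hypothesis about separating points $p\in Y\setminus X$ in the converse, are exactly what is needed to rule out extra points.
```
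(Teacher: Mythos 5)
Your proposal is correct and follows essentially the same route as the paper: the converse direction is exactly the paper's argument (writing $X$ as $\bigcap_{f\in\mathcal{A}}f^{-1}[\mathbb{R}]$ for $\mathcal{A}=\{f\in C(v_{\mathbb{R}(\infty)}\mathbf{X},\mathbb{R}(\infty)): f[X]\subseteq\mathbb{R}\}$ and applying Propositions \ref{s2:p14} and \ref{s2:p15}, after noting that $v_{\mathbb{R}(\infty)}\mathbf{X}$ is realcompact), and your forward direction is a mild repackaging of the paper's, which adjoins a single point $p$ to $X$ and invokes Theorem \ref{s2:t8} rather than showing the whole intersection collapses to $X$. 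The one slip is a citation: the collapse of that intersection to $X$ is justified by Theorem \ref{s2:t8} ((i)$\Rightarrow$(ii)) --- $X$ is realcompact and dense in the intersection, and every member of $C(\mathbf{X})$ extends real-valuedly over it via the Hewitt extension property --- not by the closedness characterization of Theorem \ref{s2:t12}.
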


\begin{proof}
First, assume that $\mathbf{X}$ is a Tychonoff space. Suppose that there exists a point $p\in v_{\mathbb{R}(\infty)}X\setminus X$ such that, for every $f\in C(v_{\mathbb{R}(\infty)}\mathbf{X}, \mathbb{R}(\infty))$ with $f[X]\subseteq\mathbb{R}$, we have $f(p)\in\mathbb{R}$.
Let $Y=X\cup\{p\}$. For the subspace $\mathbf{Y}$ of $v_{\mathbb{R}(\infty)}\mathbf{X}$ and for every $f\in C(\mathbf{X})$, there exists $\tilde{f}\in C(\mathbf{Y})$ such that $\tilde{f}\upharpoonright X=f$. It follows from Theorem \ref{s2:t8} that $\mathbf{X}$ is not realcompact.

Now, suppose that, for each $p\in v_{\mathbb{R}(\infty)}X\setminus X$, there exists a function $f\in C(v_{\mathbb{R}(\infty)}\mathbf{X}, \mathbb{R}(\infty))$ such that $f(p)=\infty$ and $f[X]\subseteq\mathbb{R}$. Let
\[\mathcal{A}=\{ f\in C(v_{\mathbb{R}(\infty)}\mathbf{X}, \mathbb{R}(\infty)): f[X]\subseteq\mathbb{R}\}.\]
Since $\mathbb{R}(\infty)$ is realcompact, so is $v_{\mathbb{R}(\infty)}\mathbf{X}$. Hence, by Proposition \ref{s2:p15}, for every $f\in\mathcal{A}$, the subspace $f^{-1}[\mathbb{R}]$ of $v_{\mathbb{R}(\infty)}\mathbf{X}$ is realcompact. Since $X=\bigcap\{f^{-1}[\mathbb{R}]: f\in\mathcal{A}\}$, it follows from Proposition \ref{s2:p14} that $\mathbf{X}$ is realcompact.
\end{proof}

\section{The $\mathbf{ZF}$-proof of the Herrlich-Chew theorem}
\label{s3}

 To prove that if $\mathbf{X}$ is an $\mathbb{N}$-compact space, then every clopen ultrafilter with the countable intersection property in $\mathbf{X}$ is fixed, Chew applied in \cite[proof of Theorem A]{Chew} the Tychonoff Product Theorem to have that $v_{\mathbb{N}(\infty)}\mathbf{X}$ is compact. However, the following proposition shows that, in $\mathbf{ZF}$, the space $v_{\mathbb{N}(\infty)}\mathbf{X}$ may fail to be compact. 

\begin{proposition}
\label{s3:p1}  $[\mathbf{ZF}]$ The following statements (i) and (ii) are both equivalent to $\mathbf{BPI}$:
\begin{enumerate}
\item[(i)] For every infinite set $J$, the space $\mathbb{N}(\infty)^J$ is compact.
\item[(ii)] For every zero-dimensional $T_1$-space, $v_{\mathbb{N}(\infty)}\mathbf{X}$ is compact.
\end{enumerate}
\end{proposition}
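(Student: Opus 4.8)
The plan is to establish the cycle $\mathbf{BPI}\Rightarrow\text{(i)}\Rightarrow\text{(ii)}\Rightarrow\mathbf{BPI}$, which yields that (i), (ii) and $\mathbf{BPI}$ are pairwise equivalent. Throughout I would use the two reformulations of $\mathbf{BPI}$ recorded in Remark \ref{s1:r6}(c): that $\mathbf{BPI}$ is equivalent both to ``every product of compact Hausdorff spaces is compact'' (Form 14 J) and to ``every Cantor cube is compact'' (Form 14 K). The first reformulation drives the implication $\mathbf{BPI}\Rightarrow\text{(i)}$, while the second is the target of $\text{(ii)}\Rightarrow\mathbf{BPI}$.

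The two descending implications are routine. For $\mathbf{BPI}\Rightarrow\text{(i)}$, note that $\mathbb{N}(\infty)$ is a compact Hausdorff space in $\mathbf{ZF}$ (as the Alexandroff compactification of the discrete, locally compact Hausdorff space $\mathbb{N}$), so Form 14 J makes $\mathbb{N}(\infty)^J$ compact for every set $J$. For $\text{(i)}\Rightarrow\text{(ii)}$, let $\mathbf{X}$ be a zero-dimensional $T_1$-space; then $\mathbf{X}$ is $\mathbb{N}$-completely regular, and since $\mathbb{N}$ sits inside $\mathbb{N}(\infty)$ as a discrete subspace, $\mathbf{X}$ is also $\mathbb{N}(\infty)$-completely regular, so $v_{\mathbb{N}(\infty)}\mathbf{X}$ is defined. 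By Definition \ref{s2:d5} and Remark \ref{s2:r10}, $v_{\mathbb{N}(\infty)}\mathbf{X}=\cl_{\mathbb{N}(\infty)^{C(\mathbf{X},\mathbb{N}(\infty))}}(e_{C(\mathbf{X},\mathbb{N}(\infty))}[X])$ is a closed subspace of a power of $\mathbb{N}(\infty)$. That power is compact by (i) (the case of a finite exponent being the $\mathbf{ZF}$-fact that finite products of compact spaces are compact), and a closed subspace $C$ of a compact space is compact in $\mathbf{ZF}$: one replaces each relatively open cover-member $U$ of $C$ by the canonically associated open set $K\setminus(C\setminus U)$ of the ambient space $K$, thereby avoiding any appeal to choice. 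Hence $v_{\mathbb{N}(\infty)}\mathbf{X}$ is compact.

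The decisive implication is $\text{(ii)}\Rightarrow\mathbf{BPI}$, and its heart is the observation that every Cantor cube is \emph{already} $\mathbb{N}(\infty)$-compact in $\mathbf{ZF}$, where $\mathbf{2}$ is the discrete two-point space of Remark \ref{s2:r11}. Fix a set $J$; the case of finite $J$ is trivial, so assume $J$ is infinite. The two-point subset $\{1,2\}$ of $\mathbb{N}$ carries the discrete topology and, being finite in the Hausdorff space $\mathbb{N}(\infty)$, is closed in $\mathbb{N}(\infty)$; thus $\mathbf{2}$ is homeomorphic to a closed subspace of $\mathbb{N}(\infty)$. Consequently $\mathbf{2}^J$ is homeomorphic to $\{1,2\}^J=\bigcap_{j\in J}\pi_j^{-1}[\{1,2\}]$, a closed subspace of $\mathbb{N}(\infty)^J$, so by Definition \ref{s2:d1} the cube $\mathbf{2}^J$ is $\mathbb{N}(\infty)$-compact. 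Being $\mathbb{N}(\infty)$-compact and Hausdorff, $\mathbf{2}^J$ is its own Hewitt $\mathbb{N}(\infty)$-compact extension: indeed $\langle\mathbf{2}^J,\id\rangle$ satisfies Definition \ref{s2:d4}, so the uniqueness asserted in Theorem \ref{s2:t9} (equivalently, the implication $(i)\Rightarrow(iii)$ of Theorem \ref{s2:t8}) gives $v_{\mathbb{N}(\infty)}(\mathbf{2}^J)\approx\mathbf{2}^J$. Now (ii) asserts that $v_{\mathbb{N}(\infty)}(\mathbf{2}^J)$ is compact, and since compactness is invariant under homeomorphism, $\mathbf{2}^J$ is compact. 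As $J$ was arbitrary, every Cantor cube is compact, which is $\mathbf{BPI}$ by Form 14 K.

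The main obstacle is conceptual rather than computational: it lies in recognizing, for the step $\text{(ii)}\Rightarrow\mathbf{BPI}$, that $\mathbf{2}^J$ coincides in $\mathbf{ZF}$ with its Hewitt $\mathbb{N}(\infty)$-compact extension, so that the hypothesis (ii) can be transported onto $\mathbf{2}^J$ itself and converted into Form 14 K. One must also be vigilant that all the supporting facts — the Hausdorffness and compactness of $\mathbb{N}(\infty)$, the closedness of $\{1,2\}^J$ in $\mathbb{N}(\infty)^J$, the preservation of compactness under passage to closed subspaces, and the identification $v_{\mathbb{N}(\infty)}(\mathbf{2}^J)\approx\mathbf{2}^J$ — are genuinely choice-free, since the entire point of the proposition is to locate the failure of compactness of $v_{\mathbb{N}(\infty)}\mathbf{X}$ precisely at the strength of $\mathbf{BPI}$.
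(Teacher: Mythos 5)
Your proof is correct and follows essentially the same route as the paper's: $\mathbf{BPI}\Rightarrow$(i) via the product form of $\mathbf{BPI}$ from Remark \ref{s1:r6}(c), the return to $\mathbf{BPI}$ via the Cantor cube $\mathbf{2}^J$ sitting as a closed subspace of $\mathbb{N}(\infty)^J$ (Form 14 K), and the identification of an already $\mathbb{N}(\infty)$-compact space with its own Hewitt $\mathbb{N}(\infty)$-compact extension via Theorems \ref{s2:t8} and \ref{s2:t9}. The only organizational difference is that you apply hypothesis (ii) directly to $\mathbf{2}^J$ to close the cycle at $\mathbf{BPI}$, whereas the paper applies (ii) to a power of $\mathbb{N}(\infty)$ to recover (i) and extracts $\mathbf{BPI}$ from (i) separately; the ingredients are identical.
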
 
\begin{proof}
We use the equivalents of $\mathbf{BPI}$ cited in Remark \ref{s1:r6}(c).  Since for every infinite set $J$, the Cantor cube $\mathbf{2}^J$ is a closed subspace of the space $\mathbb{N}(\infty)^J$, and $\mathbf{BPI}$ implies that $\mathbb{N}(\infty)^J$ is compact, we deduce that (i) and $\mathbf{BPI}$ are equivalent. Clearly, (i) implies (ii).

 To show that (ii) implies (i), we fix an infinite set $J$ and put $\mathbf{X}=\mathbb{N}(\infty)$. Since $\mathbf{X}$ is $\mathbb{N}$-compact, it follows from Theorems \ref{s2:t8} and \ref{s2:t9} that $v_{\mathbb{N}(\infty)}\mathbf{X}=\mathbf{X}$. Hence (ii) implies (i).
\end{proof}

With Proposition \ref{s3:p1} in hand, one can easily check that the $\mathbf{ZFC}$-proof of  the Herrlich-Chew theorem (see Theorem \ref{s1:t3}), given in \cite{Chew}, although incorrect in $\mathbf{ZF}$, remains correct in $\mathbf{ZF+BPI}$. The main goal of this section is to modify it to a proof of Theorem \ref{s1:t7}(i) reformulated as follows:

\begin{theorem}
	\label{s3:t2}
(Cf. Theorem \ref{s1:t7}(i).) It is true in $\mathbf{ZF}$ that, for every zero-dimensional $T_1$-space $\mathbf{X}$, the space $\mathbf{X}$ is $\mathbb{N}$-compact if and only if every clopen ultrafilter with the countable intersection property in $\mathbf{X}$ is fixed. 
\end{theorem}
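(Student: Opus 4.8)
The plan is to establish both implications using the Hewitt $\mathbb{N}(\infty)$-compact extension $v_{\mathbb{N}(\infty)}\mathbf{X}$ as the central tool, together with the clopen-ultrafilter structure on $\mathbf{X}$, while carefully avoiding any appeal to compactness of $v_{\mathbb{N}(\infty)}\mathbf{X}$ (which by Proposition \ref{s3:p1} is not available in $\mathbf{ZF}$). For the forward direction, suppose $\mathbf{X}$ is $\mathbb{N}$-compact and let $\mathcal{U}$ be a clopen ultrafilter with the countable intersection property. By Theorem \ref{s2:t16}, $\mathbb{N}$-compactness means precisely that for every $p\in v_{\mathbb{N}(\infty)}X\setminus X$ there is $f\in C(v_{\mathbb{N}(\infty)}\mathbf{X},\mathbb{N}(\infty))$ with $f(p)=\infty$ and $f[X]\subseteq\mathbb{N}$. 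The idea is to show that if $\mathcal{U}$ were free, then its closure in $v_{\mathbb{N}(\infty)}\mathbf{X}$ would cluster at some point $p$ outside $X$; applying the separating function $f$ at $p$ and pulling back the clopen sets $f^{-1}[\{n\}]\cap X$ along $f$ should produce, via the countable intersection property, a contradiction. Concretely, I would use the fact that each $f^{-1}[\{n\}]$ is clopen in $X$, that $\mathcal{U}$ being an ultrafilter decides each such clopen set, and that $f^{-1}[\{\infty\}]\cap X=\emptyset$ forces a countable family of $\mathcal{U}$-small complements whose empty intersection violates the countable intersection property.

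For the reverse direction, assume every clopen ultrafilter with the countable intersection property is fixed; I want to conclude $\mathbf{X}$ is $\mathbb{N}$-compact, again by verifying the criterion of Theorem \ref{s2:t16}. So fix $p\in v_{\mathbb{N}(\infty)}X\setminus X$ and build the desired $f$. The natural move is to pass to the trace on $X$ of a suitable neighbourhood structure at $p$: the family of clopen subsets $C$ of $X$ whose closures in $v_{\mathbb{N}(\infty)}\mathbf{X}$ contain $p$ (equivalently, those clopen $C$ with $p\in\cl_{v_{\mathbb{N}(\infty)}\mathbf{X}}C$) should form a clopen ultrafilter $\mathcal{U}_p$ on $\mathbf{X}$, using zero-dimensionality and the ultrafilter/prime property inherited from $p$. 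Since $p\notin X$ and $\mathbf{X}$ is $T_1$, $\mathcal{U}_p$ is free, so by hypothesis it fails the countable intersection property: there is a countable subfamily with empty intersection, which I would then convert into the function $f$ witnessing separation of $p$ from $X$, exactly as demanded by Theorem \ref{s2:t16}.

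The main obstacle, and the place where the $\mathbf{ZF}$-proof genuinely departs from the classical argument, is the construction of the separating function $f$ from a countable family of clopen sets without invoking any choice principle. In $\mathbf{ZFC}$ one freely extracts a decreasing countable sequence of clopen sets and patches together a continuous map into $\mathbb{N}(\infty)$ by an unspecified enumeration; in $\mathbf{ZF}$ I must ensure the enumeration and the resulting assignment $x\mapsto f(x)$ are canonically definable from the given data. The key will be that a \emph{single} countable family $\{C_n:n\in\omega\}$ witnessing failure of the countable intersection property is an explicitly given sequence, so I can define $f$ by a definite rule — for instance sending $x$ to the least $n$ with $x\notin C_n$ (and to $\infty$ if no such $n$ exists) after replacing the $C_n$ by their finite intersections $\bigcap_{k\le n}C_k$ to make the sequence decreasing. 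Verifying that this $f$ is continuous into $\mathbb{N}(\infty)$, extends over $v_{\mathbb{N}(\infty)}\mathbf{X}$ by the Hewitt property of Theorem \ref{s2:t9}, and sends $p$ to $\infty$ is then routine but must be done with no hidden selection. I would also double-check the forward direction for the same hazard: deducing the countable family from the continuity of $f$ at $p$ uses only the fibers $f^{-1}[\{n\}]$, which are given canonically by $f$ itself, so no choice intervenes there.
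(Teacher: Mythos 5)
Your overall architecture (both directions routed through $v_{\mathbb{N}(\infty)}\mathbf{X}$ and the separation criterion of Theorem \ref{s2:t16}) is the same as the paper's, and your reverse direction is essentially the paper's argument: the trace ultrafilter $\mathcal{U}_p=\{C\in\mathcal{CO}(\mathbf{X}): p\in\cl_{v_{\mathbb{N}(\infty)}\mathbf{X}}(C)\}$ is free, a countable witness to the failure of the countable intersection property is made decreasing by finite intersections, and the ``least $n$ with $x\notin C_n$'' function is canonical, continuous into $\mathbb{N}(\infty)$, extends by the Hewitt property, and sends $p$ to $\infty$. That part is fine and needs no choice.

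The genuine gap is in the forward direction, and you have misplaced where the $\mathbf{ZF}$-difficulty lives. You assert that a free clopen ultrafilter $\mathcal{U}$ with the countable intersection property ``would cluster at some point $p$ outside $X$'' in $v_{\mathbb{N}(\infty)}\mathbf{X}$, but you give no mechanism for producing such a $p$. In the classical proof this is exactly the step supplied by compactness of $v_{\mathbb{N}(\infty)}\mathbf{X}$, which you correctly rule out (Proposition \ref{s3:p1} shows it is equivalent to $\mathbf{BPI}$); so as written the step would fail. The paper's proof does the real work here: it reruns the proof of the Tychonoff product theorem coordinatewise, observing that for each $j\in J=C(\mathbf{X},\mathbb{N}(\infty))$ the family $\{\cl_{\mathbb{N}(\infty)}\pi_j[e_J[U]]: U\in\mathcal{U}\}$ has nonempty intersection by compactness of the \emph{single} factor $\mathbb{N}(\infty)$, and then uses the well-orderability of $\mathbb{N}(\infty)$ to pick $p(j)$ canonically in each coordinate, after which the ultrafilter property of $\mathcal{U}$ forces $p\in\bigcap_{U\in\mathcal{U}}\cl_{\mathbf{P}}(e_J[U])$. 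By contrast, the step you single out as the ``main obstacle'' --- building $f$ from a countable family of clopen sets without choice --- is unproblematic, since the family comes indexed and your least-$n$ rule is already definable. To repair your forward direction you must either supply the coordinatewise cluster-point construction or replace it by a different choice-free argument (e.g., working in $\mathbb{N}^{C(\mathbf{X},\mathbb{N})}$: for each $g\in C(\mathbf{X},\mathbb{N})$ the countable intersection property forces exactly one fiber $g^{-1}[\{n\}]$ into $\mathcal{U}$, which defines a point of $\cl(e[X])=e[X]$ lying in $\bigcap\mathcal{U}$).
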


\begin{proof}
Let $\mathbf{X}$ be a non-empty zero-dimensional $T_1$-space. We put $J= C(\mathbf{X}, \mathbb{N}(\infty))$, $\mathbf{Y}=\mathbb{N}(\infty)^{J}$ and $P=\cl_{\mathbf{Y}}e_J[X]$. In view of Theorem  \ref{s2:t9}, for the subspace $\mathbf{P}$ of $\mathbf{Y}$, we  have $\mathbf{P}=v_{\mathbb{N}(\infty)}\mathbf{X}$. One can easily check that the following condition $(a)$ is satisfied:
\[\tag{$a$}(\forall A,B\in\mathcal{CO}(\mathbf{X})) \cl_{\mathbf{P}}(e_J[A\cap B])=\cl_{\mathbf{P}}(e_J[A])\cap\cl_{\mathbf{P}}(e_J[B]).\]

Suppose that $\mathbf{X}$ is $\mathbb{N}$-compact, and $\mathcal{U}$ is a free clopen ultrafilter in $\mathbf{X}$. We are going to prove in $\mathbf{ZF}$ that $\mathcal{U}$ does not have the countable intersection property.  To this aim, we modify the standard proof of the Tychonoff Product Theorem.  Namely, we define $\mathcal{U}^{\ast}:=\{\cl_{\mathbf{P}}(e_J[U]): U\in\mathcal{U}\}$ and, for every $j\in J$, we put
 \[\mathcal{U}_j:=\{\cl_{\mathbb{N}(\infty)}\pi_j[U]: U\in\mathcal{U}^{\ast}\}.\] 
\noindent We notice that it follows from $(a)$ that $\mathcal{U}^{\ast}$ is a clopen ultrafilter in $\mathbf{P}$. Since $\mathcal{U}^{\ast}$ is a filter, it follows that, for every $j\in J$, the family $\mathcal{U}_j$ has the finite intersection property. By the compactness of $\mathbb{N}(\infty)$, for every $j\in J$, $\bigcap\mathcal{U}_J\neq\emptyset$. Therefore, since $\mathbb{N}(\infty)$ is well-orderable, we can fix $p\in Y$ such that, for every $j\in J$, $p(j)\in\bigcap\mathcal{U}_j$. It is easy to check that $p\in P$. Suppose that there exists $U_0\in\mathcal{U}^{\ast}$ such that $p\notin U_0$. There exists a family $\{V_j: j\in J\}$ of clopen sets of $\mathbb{N}(\infty)$ such that the set $K=\{j\in J: V_j\neq \mathbb{N}(\infty)\}$ is finite and, for $V=\prod_{j\in J}V_j$, we have $p\in V\subseteq Y\setminus U_0$. For every $j\in K$ and $U\in\mathcal{U}^{\ast}$, we have $U\cap \pi_j^{-1}[V_j]\neq\emptyset$, so $\pi_j^{-1}[V_j]\cap P\in\mathcal{U}^{\ast}$ because $\mathcal{U}^{\ast}$ is a clopen ultrafilter in $\mathbf{P}$. This implies that $V\cap P\in\mathcal{U}^{\ast}$ which is impossible. The contradiction obtained proves that $p\in\bigcap\mathcal{U}^{\ast}$. Since $\mathcal{U}$ is free, we deduce that $p\notin e_J[X]$. This, together with the $\mathbb{N}$-compactness of $\mathbf{X}$ and Theorem \ref{s2:t16}, implies that there exists $f\in C(\mathbf{P}, \mathbb{N}(\infty))$ such that $f(p)=\infty$ and $f[e_J[X]]\subseteq \mathbb{N}$. In much the same way, as in the proof of Theorem A in \cite{Chew}, for every $n\in\mathbb{N}$, we consider the set $V_n=\mathbb{N}(\infty)\setminus\{i\in\mathbb{N}: i\leq n\}$ and $W_n=f^{-1}[V_n]$. We observe that, for every $n\in\mathbb{N}$,  $e_J^{-1}[W_n]\in\mathcal{U}$ but $\bigcap_{n\in\mathbb{N}}e_J^{-1}[W_n]=\emptyset$, so $\mathcal{U}$ does not have the countable intersection property. 
	
	Now, suppose that $\mathbf{X}$ is not $\mathbb{N}$-compact. It follows from Theorem \ref{s2:t16} that there exists $p_0\in P\setminus e_J[X]$ such that, for every $f\in C(\mathbf{P}, \mathbb{N}(\infty))$ with $f[e_J[X]]\subseteq\mathbb{N}$, we have $f(p_0)\in\mathbb{N}$. Let $\mathcal{F}=\{ A\in\mathcal{CO}(\mathbf{X}): p_0\in\cl_{\mathbf{P}}(e_J[A])\}$. It follows from $(a)$ that $\mathcal{F}$ is a clopen ultrafilter in $\mathbf{X}$ such that $\bigcap\mathcal{F}=\emptyset$.  To show that $\mathcal{F}$ has the countable intersection property, we fix a subfamily $\{F_n: n\in\mathbb{N}\}$ of $\mathcal{F}$ such that, for every $n\in\mathbb{N}$, $F_{n+1}\subseteq F_n$. Suppose that $\bigcap_{n\in\mathbb{N}}F_n=\emptyset$. In much the same way, as in \cite[proof of Theorem A]{Chew}, we define a function $f_0\in C(\mathbf{X}, \mathbb{N})$ as follows:
	\[ 
	f_0(x)=\begin{cases} 1&\quad\text{if}\quad x\in X\setminus F_1,\\
	n+1&\quad\text{if}\quad x\in F_{n}\setminus F_{n+1}.\end{cases} \]
	There exists a unique $g_0\in C(\mathbf{P}, \mathbb{N}(\infty))$ such that $g_0\circ e_J=f_0$. Then $g_0(p_0)=\infty$, and this is impossible. The contradiction obtained completes the proof.
\end{proof}

One can notice that Chew's proof, given in \cite{Chew}, that if every clopen ultrafilter with the countable intersection property in a zero-dimensional $T_1$-space $\mathbf{X}$ is fixed, then $\mathbf{X}$ is $\mathbb{N}$-compact, is correct also in $\mathbf{ZF}$, but we have clarified it.

\section{A $c_{\delta}$-modification of the Herrlich-Chew theorem}
\label{s4}

Since it is interesting in itself if ``clopen ultrafilter'' can be replaced with  ``$c_{\delta}$-ultrafilter'' in Theorem \ref{s3:t2}, we want to prove Theorem \ref{s1:t7}(iv) in this section. To do this, we need the following lemma.

\begin{lemma}
	\label{s4:l1}
	$[\mathbf{ZF}]$
	Let $\mathcal{A}$ be a non-empty family of subsets of a set $X$ such that $\mathcal{A}$ is closed under finite unions and finite intersections. Then $\mathbf{CMC}$ implies that $\mathcal{A}_{\delta\delta}=\mathcal{A}_{\delta}$. 
\end{lemma}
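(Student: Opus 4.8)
The plan is to prove both inclusions, only one of which will cost any choice. The inclusion $\mathcal{A}_{\delta}\subseteq\mathcal{A}_{\delta\delta}$ holds outright in $\mathbf{ZF}$: since $\mathcal{A}\neq\emptyset$, also $\mathcal{A}_{\delta}\neq\emptyset$, and every $D\in\mathcal{A}_{\delta}$ satisfies $D=\bigcap\{D\}$ with $\{D\}\in[\mathcal{A}_{\delta}]^{\leq\omega}\setminus\{\emptyset\}$, so $D\in(\mathcal{A}_{\delta})_{\delta}=\mathcal{A}_{\delta\delta}$. Hence everything reduces to proving, under $\mathbf{CMC}$, that $\mathcal{A}_{\delta\delta}\subseteq\mathcal{A}_{\delta}$. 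So I would fix $D\in\mathcal{A}_{\delta\delta}$; by definition $D=\bigcap\mathcal{W}$ for some non-empty countable $\mathcal{W}\subseteq\mathcal{A}_{\delta}$, and I fix one surjection $\omega\to\mathcal{W}$, writing $\mathcal{W}=\{D_n:n\in\omega\}$. (Naming a single surjection onto one given countable set requires no choice, and this treats finite $\mathcal{W}$ uniformly, with repetitions.)

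Next I would invoke $\mathbf{CMC}$ to select, uniformly in $n$, a countable subfamily of $\mathcal{A}$ representing $D_n$. For each $n$ set $S_n=\{\mathcal{U}\in[\mathcal{A}]^{\leq\omega}\setminus\{\emptyset\}:\bigcap\mathcal{U}=D_n\}$, which is non-empty because $D_n\in\mathcal{A}_{\delta}$. Applying $\mathbf{CMC}$ to $\{S_n:n\in\omega\}$ yields a function sending each $n$ to a finite non-empty $T_n\subseteq S_n$. Putting $\mathcal{U}_n:=\bigcup T_n$, I obtain a non-empty subfamily of $\mathcal{A}$ that is a \emph{finite} union of countable sets, hence countable already in $\mathbf{ZF}$ (finite unions of countable sets are countable, using only finite choice), and $\bigcap\mathcal{U}_n=\bigcap_{\mathcal{U}\in T_n}\bigcap\mathcal{U}=D_n$, since every member of $T_n$ intersects to $D_n$. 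This produces a genuine function $n\mapsto\mathcal{U}_n$, that is, a family $\{\mathcal{U}_n:n\in\omega\}$ of non-empty countable subfamilies of $\mathcal{A}$ with $\bigcap\mathcal{U}_n=D_n$ for every $n$.

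The remaining, and conceptually decisive, step is to collapse the countable union $\bigcup_{n}\mathcal{U}_n$ into a single countable family, and this is exactly where $\mathbf{ZF}$ alone is insufficient: a countable union of countable sets need not be countable, so I cannot merely write $D=\bigcap(\bigcup_n\mathcal{U}_n)$ and declare the union countable. Here I would appeal to $\mathbf{A}(\delta\delta)$, which follows from $\mathbf{CMC}$ by Remark \ref{s1:r6}(a) and whose hypotheses hold since $\mathcal{A}$ is closed under finite unions and finite intersections and $\{\mathcal{U}_n:n\in\omega\}$ is a family of non-empty countable subfamilies of $\mathcal{A}$. It furnishes a family $\{V_{m,n}:m,n\in\omega\}\subseteq\mathcal{A}$ with $\bigcap_{m\in\omega}V_{m,n}=\bigcap\mathcal{U}_n=D_n$ for every $n$. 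Consequently,
\[
D=\bigcap_{n\in\omega}D_n=\bigcap_{n\in\omega}\bigcap_{m\in\omega}V_{m,n}=\bigcap\{V_{m,n}:(m,n)\in\omega\times\omega\},
\]
and since $\{V_{m,n}:(m,n)\in\omega\times\omega\}$ is a countable subfamily of $\mathcal{A}$, this exhibits $D$ as a member of $\mathcal{A}_{\delta}$. Together with the trivial inclusion, this yields $\mathcal{A}_{\delta\delta}=\mathcal{A}_{\delta}$. The main obstacle is precisely the enumeration of the union, and the role of $\mathbf{CMC}$ is twofold: directly, to pick the representing families $\mathcal{U}_n$, and, through $\mathbf{A}(\delta\delta)$, to re-index their union as a doubly indexed — hence countable — family of members of $\mathcal{A}$.
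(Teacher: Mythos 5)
Your proof is correct and follows essentially the same route as the paper's: apply $\mathbf{CMC}$ to the non-empty sets of countable representing families to extract finite subfamilies, take their (still countable) unions to get a single countable representative $\mathcal{U}_n$ of each $D_n$ with $\bigcap\mathcal{U}_n=D_n$, and then invoke $\mathbf{A}(\delta\delta)$ (a consequence of $\mathbf{CMC}$ by Remark \ref{s1:r6}(a)) to re-index everything as a countable doubly indexed family $\{V_{m,n}\}$ of members of $\mathcal{A}$. The only differences are expository: you spell out the trivial inclusion $\mathcal{A}_{\delta}\subseteq\mathcal{A}_{\delta\delta}$ and the countability of a finite union of countable sets, both of which the paper leaves implicit.
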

\begin{proof}
	Assuming $\mathbf{CMC}$, it suffices to show that $\mathcal{A}_{\delta\delta}\subseteq\mathcal{A}_{\delta}$. Suppose that, for every $n\in\omega$, $A_n\in\mathcal{A}_{\delta}$ and $A=\bigcap_{n\in\omega}A_n$. For every $n\in\omega$, let 
	\[\mathcal{B}_n=\{ \mathcal{B}\in [\mathcal{A}]^{\leq\omega}\setminus\{\emptyset\}: A_n=\bigcap\mathcal{B}\}.\]
	\noindent By $\mathbf{CMC}$, there exists a family $\{\mathcal{F}_n: n\in\omega\}$ such that, for every $n\in\omega$, $\mathcal{F}_n$ is a non-empty finite subfamily of $\mathcal{B}_n$. For every $n\in\omega$, let $\mathcal{U}_n=\bigcup\mathcal{F}_n$. Then, for every $n\in\omega$, $\mathcal{U}_n\in [\mathcal{A}]^{\leq\omega}\setminus\{\emptyset\}$ and $A_n=\bigcap\mathcal{U}_n$. Since $\mathbf{CMC}$ implies $\mathbf{A}(\delta\delta)$ (see Remark \ref{s1:r6}(a)), there exists a family $\{V_{m,n}: m,n\in\omega\}$ of members of $\mathcal{A}$ such that, for every $n\in\omega$, $A_n=\bigcap_{m\in\omega}V_{m,n}$. Then $A=\bigcap_{n\in\omega}A_n\in \mathcal{A}_{\delta}$.
\end{proof}

\begin{theorem}
	\label{s4:t2} 
	(Cf. Theorem \ref{s1:t7}(iii).) $[\mathbf{ZF}]$ Let $\mathbf{X}$ be a zero-dimensional $T_1$-space. Then the following conditions are satisfied:
	\begin{enumerate}
		\item[(i)] if $\mathbf{X}$ is  $\mathbb{N}$-compact, then every $c_{\delta}$-ultrafilter with the countable intersection property in $\mathbf{X}$ is fixed;
		\item[(ii)] $\mathbf{CMC}$ implies that if every $c_{\delta}$- ultrafilter  with the countable intersection property in $\mathbf{X}$ is fixed, then $\mathbf{X}$ is $\mathbb{N}$-compact.
	\end{enumerate}
\end{theorem}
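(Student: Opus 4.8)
The plan is to deduce both implications from the already-proved Herrlich-Chew theorem (Theorem \ref{s3:t2}) by transferring information between clopen ultrafilters and $c_{\delta}$-ultrafilters. I will use freely that $\mathcal{CO}(\mathbf{X})\subseteq\mathcal{CO}_{\delta}(\mathbf{X})$, that $\mathcal{CO}_{\delta}(\mathbf{X})$ is closed under finite intersections, and the standard dichotomy for a clopen ultrafilter $\mathcal{V}$: for every $C\in\mathcal{CO}(\mathbf{X})$ exactly one of $C$, $X\setminus C$ lies in $\mathcal{V}$.

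For (i), suppose $\mathbf{X}$ is $\mathbb{N}$-compact and let $\mathcal{U}$ be a $c_{\delta}$-ultrafilter with the countable intersection property. First I would pass to the clopen trace $\mathcal{V}:=\mathcal{U}\cap\mathcal{CO}(\mathbf{X})$ and verify in $\mathbf{ZF}$ that it is a clopen ultrafilter: it is a clopen filter since $\mathcal{U}$ is upward closed and closed under finite intersections within $\mathcal{CO}_{\delta}(\mathbf{X})$, and it is maximal because, if $C\in\mathcal{CO}(\mathbf{X})\setminus\mathcal{U}$, then maximality of $\mathcal{U}$ yields $U\in\mathcal{U}$ with $C\cap U=\emptyset$, so $U\subseteq X\setminus C$ forces $X\setminus C\in\mathcal{U}\cap\mathcal{CO}(\mathbf{X})=\mathcal{V}$. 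As $\mathcal{V}\subseteq\mathcal{U}$, it inherits the countable intersection property, and Theorem \ref{s3:t2} provides $x\in\bigcap\mathcal{V}$. Finally I would show $x\in\bigcap\mathcal{U}$ one member at a time: for $D\in\mathcal{U}$ choose any representation $D=\bigcap_{n}C_{n}$ with $C_{n}\in\mathcal{CO}(\mathbf{X})$; upward closure gives $C_{n}\in\mathcal{U}$, hence $C_{n}\in\mathcal{V}$ and $x\in C_{n}$ for all $n$, so $x\in D$. Because each $D$ is handled separately via a single representation, no choice principle is invoked and the argument remains in $\mathbf{ZF}$.

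For (ii), assume $\mathbf{CMC}$ and that every $c_{\delta}$-ultrafilter with the countable intersection property is fixed; by Theorem \ref{s3:t2} it is enough to fix an arbitrary clopen ultrafilter $\mathcal{V}$ with the countable intersection property. I would put $\mathcal{V}_{\delta}=\{\bigcap\mathcal{W}:\mathcal{W}\in[\mathcal{V}]^{\leq\omega}\setminus\{\emptyset\}\}$ and define $\mathcal{U}:=\{D\in\mathcal{CO}_{\delta}(\mathbf{X}):(\exists W\in\mathcal{V}_{\delta})\ W\subseteq D\}$. That $\mathcal{U}$ is a proper $c_{\delta}$-filter is routine: every member of $\mathcal{V}_{\delta}$ is non-empty since $\mathcal{V}$ has the countable intersection property, and $\mathcal{V}_{\delta}$ is closed under finite intersections. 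Proving $\mathcal{U}$ is an ultrafilter is the one structural step, and it is choice-free: given $D\in\mathcal{CO}_{\delta}(\mathbf{X})\setminus\mathcal{U}$ with a representation $D=\bigcap_{n}C_{n}$, were every $C_{n}$ in $\mathcal{V}$ we would get $D\in\mathcal{V}_{\delta}\subseteq\mathcal{U}$, a contradiction; hence some $C_{n}\notin\mathcal{V}$, and the clopen dichotomy gives $X\setminus C_{n}\in\mathcal{V}\subseteq\mathcal{U}$ with $D\cap(X\setminus C_{n})=\emptyset$, establishing maximality. Granting that $\mathcal{U}$ has the countable intersection property, the hypothesis makes it fixed, and $\mathcal{V}\subseteq\mathcal{U}$ gives $\emptyset\neq\bigcap\mathcal{U}\subseteq\bigcap\mathcal{V}$, so $\mathcal{V}$ is fixed.

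The hard part will be verifying that $\mathcal{U}$ has the countable intersection property, and this is precisely where $\mathbf{CMC}$ is indispensable. Given a countable $\{D_{k}:k\in\omega\}\subseteq\mathcal{U}$, each set $\{W\in\mathcal{V}_{\delta}:W\subseteq D_{k}\}$ is non-empty, so $\mathbf{CMC}$ lets me select a non-empty finite $\mathcal{G}_{k}$ from each and put $W_{k}=\bigcap\mathcal{G}_{k}\in\mathcal{V}_{\delta}$ with $W_{k}\subseteq D_{k}$. Then $\bigcap_{k}D_{k}\supseteq\bigcap_{k}W_{k}\in\mathcal{V}_{\delta\delta}$. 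Since $\mathcal{V}$ is closed under finite unions and finite intersections, Lemma \ref{s4:l1} applies and yields $\mathcal{V}_{\delta\delta}=\mathcal{V}_{\delta}$, so $\bigcap_{k}W_{k}\in\mathcal{V}_{\delta}$ is non-empty by the countable intersection property of $\mathcal{V}$, whence $\bigcap_{k}D_{k}\neq\emptyset$. The subtle point to keep in view is that $\mathbf{CMC}$ is really used twice here, once to extract the sequence $(W_{k})$ and once inside Lemma \ref{s4:l1}, while direction (i) must stay choice-free, which is exactly why there the clopen representations are called upon only one set at a time.
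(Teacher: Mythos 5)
Your proof is correct and follows essentially the same route as the paper: in (i) you pass to the clopen trace (which coincides with the paper's $\mathcal{U}^{\ast}$ by upward closure), apply Theorem \ref{s3:t2}, and recover membership in each $c_{\delta}$-set from one clopen representation at a time; in (ii) you build the same $c_{\delta}$-ultrafilter generated by $\mathcal{V}_{\delta}$ and use $\mathbf{CMC}$ together with Lemma \ref{s4:l1} exactly as the paper does, merely phrasing the implication directly rather than contrapositively.
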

\begin{proof}
	(i) Suppose that the space $\mathbf{X}$ is  $\mathbb{N}$-compact, and $\mathcal{U}$ is a $c_{\delta}$-ultrafilter in $\mathbf{X}$ such that $\mathcal{U}$ has the countable intersection property. We define
	\[\mathcal{U}^{\ast}:=\{V\in\mathcal{CO}(\mathbf{X}): (\exists U\in\mathcal{U}) U\subseteq V\}.\]
	\noindent One can easily check that $\mathcal{U}^{\ast}$ is a clopen ultrafilter in $\mathbf{X}$ such that $\mathcal{U}^{\ast}\subseteq\mathcal{U}$. It follows from the countable intersection property of $\mathcal{U}$ that $\mathcal{U}^{\ast}$ has the countable intersection property. Since $\mathbf{X}$ is $\mathbb{N}$-compact, we deduce from Theorem \ref{s3:t2} that there exists $p\in\bigcap\mathcal{U}^{\ast}$. Let $U\in\mathcal{U}$. There exists a family $\{U_n: n\in\omega\}$ of clopen sets in $\mathbf{X}$ such that $U=\bigcap_{n\in\omega}U_n$. For every $n\in\omega$, we have $U_n\in\mathcal{U}^{\ast}$ and, in consequence, $p\in U_n$. Hence $p\in U$. This shows that $p\in\bigcap\mathcal{U}$.\medskip
	
	(ii) Now, suppose that $\mathbf{X}$ is not $\mathbb{N}$-compact. By Theorem \ref{s3:t2}, there exists a free clopen ultrafilter $\mathcal{U}$ in $\mathbf{X}$ such that $\mathcal{U}$ has the countable intersection property. We define
	$$\mathcal{V}=\{ V\in\mathcal{CO}_{\delta}(\mathbf{X}): (\exists U\in \mathcal{U}_{\delta}) U\subseteq V\}.$$
	It is easily seen that $\mathcal{V}$ is a filter in $\mathcal{CO}_{\delta}(\mathbf{X})$ such that $\mathcal{U}\subseteq \mathcal{V}$. To check that $\mathcal{V}$ is an ultrafilter in $\mathcal{CO}_{\delta}(\mathbf{X})$, we consider any $A\in\mathcal{CO}_{\delta}(\mathbf{X})$ such that $A\notin\mathcal{V}$. We choose $\mathcal{A}\in[\mathcal{CO}(\mathbf{X})]^{\leq\omega}\setminus\{\emptyset\}$ such that $A=\bigcap\mathcal{A}$. Since $A\notin\mathcal{V}$, there exists $A_0\in\mathcal{A}$ such that $A_0\notin\mathcal{U}$. Since $\mathcal{U}$ is a clopen ultrafilter, there exists $U_0\in\mathcal{U}$ such that $A_0\cap U_0=\emptyset$. Then $A\cap U_0=\emptyset$. This shows that $\mathcal{V}$ is a $c_{\delta}$-ultrafilter in $\mathbf{X}$. Since $\mathcal{U}$ is free, so is $\mathcal{V}$.
	
	Assuming $\mathbf{CMC}$, let us show that $\mathcal{V}$ has the countable intersection property. To this aim, assume that, for every $n\in\omega$, $V_n\in\mathcal{V}$ and 
	\[\mathcal{A}_n=\{U\in\mathcal{U}_{\delta}: U\subseteq V_n\}.\]
	\noindent Since $\mathcal{U}$ is an ultrafilter in $\mathcal{CO}(\mathbf{X})$, $\mathcal{U}$ is closed under finite unions and finite intersections. Therefore, in view of Lemma \ref{s4:l1}, it follows from $\mathbf{CMC}$ that $\mathcal{U}_{\delta\delta}=\mathcal{U}_{\delta}$.  By $\mathbf{CMC}$, there exists a sequence $(\mathcal{F}_n)_{n\in\omega}$ such that, for every $n\in\omega$, $\mathcal{F}_n$ is a non-empty finite subset of $\mathcal{A}_n$. For every $n\in\omega$, let $U_n=\bigcap \mathcal{F}_n$. Then, for every $n\in\omega$, $U_n\in\mathcal{U}_{\delta}$, so $W=\bigcap_{n\in\omega}U_n\in\mathcal{U}_{\delta\delta}$. This, together with $\mathbf{CMC}$, implies that $W\in\mathcal{U}_{\delta}$. Hence $W\neq\emptyset$ because $\mathcal{U}$ has the countable intersection property.  Since $W\subseteq\bigcap_{n\in\omega}V_n$, we have $\bigcap_{n\in\omega}V_n\neq\emptyset$ . Hence, $\mathbf{CMC}$ implies that $\mathcal{V}$ is a free $c_{\delta}$-ultrafilter in $\mathbf{X}$ with the countable intersection property. 
\end{proof}

\section{A modification of Hewitt's theorem}
\label{s5}

Our aim is to prove items (ii) and (iii) of Theorem \ref{s1:t7} by modifying the proof of Theorem \ref{s3:t2} in this section. We begin with two lemmas.

\begin{lemma}
	\label{s5:l1}
	$[\mathbf{ZF}]$ For a topological space $\mathbf{X}$, let $\mathcal{U}\subseteq\mathcal{Z}(\mathbf{X})$. Then $\mathbf{CMC}$ implies that the following conditions are satisfied:
	\begin{enumerate}
	\item[(i)] every countable subfamily of $\mathcal{U}$ is functionally accessible (see Definition \ref{s1:d1});
	\item[(ii)] $\mathcal{U}$ has the weak countable intersection property if and only if it has the countable intersection property;
	\item[(iii)] the families $\mathcal{Z}(\mathbf{X})$ and $\mathcal{CO}_{\delta}(\mathbf{X})$ are both closed under countable intersections.
	\end{enumerate}
\end{lemma}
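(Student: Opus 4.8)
The plan is to prove (i) first, since both (ii) and (iii) follow from it with little extra work, and then to treat the two families in (iii) by different means. For (i), I would fix a countable subfamily $\mathcal{F}\subseteq\mathcal{U}$. For each $Z\in\mathcal{F}$ the set $A_Z:=\{f\in C(\mathbf{X}):Z(f)=Z\}$ is non-empty because $Z\in\mathcal{Z}(\mathbf{X})$, so $\{A_Z:Z\in\mathcal{F}\}$ is a countable family of non-empty sets, to which $\mathbf{CMC}$ applies. The obstacle is that $\mathbf{CMC}$ yields only a \emph{multiple} choice function, not a selector: it provides merely a non-empty finite set $G_Z\subseteq A_Z$ for each $Z$, whereas functional accessibility demands a single witness $f_Z$ with $Z(f_Z)=Z$. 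The idea I would use to bridge this gap is to exploit the ring structure of $C(\mathbf{X})$: from the finite set $G_Z$ I would form the canonical function $f_Z:=\sum_{g\in G_Z}g^{2}$, which is well defined independently of any enumeration of $G_Z$ and satisfies $Z(f_Z)=\bigcap_{g\in G_Z}Z(g)=Z$, since every $g\in G_Z$ has zero-set $Z$. The family $\{f_Z:Z\in\mathcal{F}\}$ then witnesses functional accessibility of $\mathcal{F}$.

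For (ii), the implication from the countable intersection property to the weak one is immediate in $\mathbf{ZF}$, because a functionally accessible countable subfamily is in particular a countable subfamily, so the weak condition is a restriction of the strong one. For the converse, $\mathbf{CMC}$ together with part (i) makes the two properties coincide verbatim: under $\mathbf{CMC}$ every non-empty countable subfamily $\mathcal{F}^{\ast}$ of $\mathcal{U}$ is functionally accessible, so the hypothesis of the weak countable intersection property already quantifies over \emph{all} non-empty countable subfamilies, which is precisely the countable intersection property.

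For (iii) I would handle the two families separately, since the second is not a family of zero-sets. For $\mathcal{Z}(\mathbf{X})$, take a countable family $\{Z_n:n\in\omega\}$; by (i) there is an accompanying family $\{f_n:n\in\omega\}\subseteq C(\mathbf{X})$ with $Z(f_n)=Z_n$, and I would set $h:=\sum_{n\in\omega}2^{-n}\min(|f_n|,1)$. This series converges uniformly, hence (provably in $\mathbf{ZF}$) $h\in C(\mathbf{X})$, and $h(x)=0$ exactly when $f_n(x)=0$ for every $n$, so $\bigcap_{n\in\omega}Z_n=Z(h)\in\mathcal{Z}(\mathbf{X})$. For $\mathcal{CO}_{\delta}(\mathbf{X})$ no explicit function is available, but none is needed: since $\mathcal{CO}(\mathbf{X})$ is non-empty and closed under finite unions and finite intersections, Lemma~\ref{s4:l1} applies with $\mathcal{A}=\mathcal{CO}(\mathbf{X})$ and gives $\mathcal{CO}(\mathbf{X})_{\delta\delta}=\mathcal{CO}(\mathbf{X})_{\delta}=\mathcal{CO}_{\delta}(\mathbf{X})$ under $\mathbf{CMC}$, which is exactly closure of $\mathcal{CO}_{\delta}(\mathbf{X})$ under countable intersections. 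The main obstacle throughout is the one isolated in (i): converting $\mathbf{CMC}$'s multiple choice into genuine single witnesses, which the algebraic structure of $C(\mathbf{X})$ (for $\mathcal{Z}(\mathbf{X})$) and Lemma~\ref{s4:l1} (for $\mathcal{CO}_{\delta}(\mathbf{X})$) allow one to accomplish.
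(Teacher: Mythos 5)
Your proof is correct and follows essentially the same route as the paper, which simply cites \cite{kow} (proof of Theorem 4.9(i)) for part (i) and for the $\mathcal{Z}(\mathbf{X})$ half of (iii), derives (ii) from (i) exactly as you do, and invokes Lemma \ref{s4:l1} with $\mathcal{A}=\mathcal{CO}(\mathbf{X})$ for the $\mathcal{CO}_{\delta}(\mathbf{X})$ half. Your sum-of-squares device $f_Z=\sum_{g\in G_Z}g^2$ for converting the finite sets supplied by $\mathbf{CMC}$ into canonical single witnesses, and the uniformly convergent series $\sum_{n}2^{-n}\min(|f_n|,1)$, are precisely the standard arguments that the citation stands in for, so your write-up is a correct, self-contained version of the paper's proof.
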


\begin{proof} That (i) and (iii) hold in $\mathbf{ZF+CMC}$, was shown in \cite[proof of Theorem 4.9(i)]{kow}. That (ii) holds in $\mathbf{ZF+CMC}$, follows directly from (i) and Definition \ref{s1:d1}. One can also deduce from Lemma \ref{s4:l1} that $\mathcal{CO}_{\delta}(\mathbf{X})$ is closed under countable intersections in $\mathbf{ZF+CMC}$.
\end{proof}

We omit a simple standard proof of the following lemma.

\begin{lemma}
\label{s5:l2}
$[\mathbf{ZF}]$ 
Let $\mathbf{X}$ be a Tychonoff space, $\mathbf{Y}=v_{\mathbb{R}(\infty)}\mathbf{X}$ and let $Z_1, Z_2\in\mathcal{Z}(\mathbf{X})$. Then 
\[ \cl_{\mathbf{Y}}(Z_1)\cap\cl_{\mathbf{Y}}(Z_2)=\cl_{\mathbf{Y}}(Z_1\cap Z_2).\]
\end{lemma}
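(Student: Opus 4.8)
The plan is to prove the nontrivial inclusion $\cl_{\mathbf{Y}}(Z_1\cap Z_2)\supseteq\cl_{\mathbf{Y}}(Z_1)\cap\cl_{\mathbf{Y}}(Z_2)$, since the reverse inclusion is immediate from monotonicity of closure applied to $Z_1\cap Z_2\subseteq Z_i$. The key observation is that $\mathbf{Y}=v_{\mathbb{R}(\infty)}\mathbf{X}$ is, by Theorem \ref{s2:t9}, a Hewitt $\mathbb{R}(\infty)$-compact extension of $\mathbf{X}$, so every $f\in C(\mathbf{X})$ (viewed as a map into $\mathbb{R}(\infty)$ with range in $\mathbb{R}$) extends to some $\tilde f\in C(\mathbf{Y},\mathbb{R}(\infty))$; this extending property is exactly what lets us separate points of $\mathbf{Y}\setminus\cl_{\mathbf{Y}}(Z_1\cap Z_2)$ from the zero-sets by functions computed on the dense copy of $\mathbf{X}$.

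First I would write $Z_i=Z(f_i)$ for $f_i\in C(\mathbf{X})$ and form the single function $f=f_1^2+f_2^2\in C(\mathbf{X})$, so that $Z(f)=Z_1\cap Z_2$. Let $\tilde f\in C(\mathbf{Y},\mathbb{R}(\infty))$ be the extension of $f$ guaranteed by the Hewitt extension property (with $f[X]\subseteq\mathbb{R}$). I would then take a point $p\in\mathbf{Y}$ with $p\notin\cl_{\mathbf{Y}}(Z_1\cap Z_2)=\cl_{\mathbf{Y}}(Z(f))$ and show $p\notin\cl_{\mathbf{Y}}(Z_1)\cap\cl_{\mathbf{Y}}(Z_2)$. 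The natural move is to produce a function on $\mathbf{Y}$ that vanishes at $p$ but is bounded away from zero on one of the $Z_i$ near $p$; because $X$ is dense in $\mathbf{Y}$ and $\tilde f\upharpoonright X=f$, the values of $\tilde f$ on $\mathbf{Y}$ are determined by limits of values of $f$ along $X$, and one exploits that $f$ is a sum of squares so $Z(f)$ is precisely where both $f_1$ and $f_2$ vanish simultaneously.

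The cleanest route avoids the pointwise argument entirely: I would instead show directly that $\cl_{\mathbf{Y}}(Z(f))=\tilde f^{-1}[\{0\}]\cap\cl_{\mathbf{Y}}(X)$ and, since $\tilde f^{-1}[\{0\}]$ is closed in $\mathbf{Y}$ and contains $Z(f)$, deduce $\cl_{\mathbf{Y}}(Z_1\cap Z_2)\subseteq\tilde f^{-1}[\{0\}]$. Applying the same construction to $f_1$ and $f_2$ separately gives extensions $\tilde f_1,\tilde f_2$ with $\cl_{\mathbf{Y}}(Z_i)\subseteq\tilde f_i^{-1}[\{0\}]$; conversely $\tilde f=\tilde f_1^2+\tilde f_2^2$ on the dense set $X$, hence on all of $\mathbf{Y}$ by Proposition \ref{s2:p7}, so $\tilde f(p)=0$ forces $\tilde f_1(p)=\tilde f_2(p)=0$. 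The inclusion $\cl_{\mathbf{Y}}(Z_1)\cap\cl_{\mathbf{Y}}(Z_2)\subseteq\cl_{\mathbf{Y}}(Z_1\cap Z_2)$ then follows once one checks that $\tilde f_1^{-1}[\{0\}]\cap\tilde f_2^{-1}[\{0\}]\subseteq\cl_{\mathbf{Y}}(Z(f))$, which holds because on $X$ the three zero-sets agree and $X$ is dense.

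The main obstacle will be controlling the behavior of the extensions $\tilde f_i$ at points of $\mathbf{Y}\setminus X$: in general $\tilde f_i$ takes values in $\mathbb{R}(\infty)$ and the identity $\tilde f=\tilde f_1^2+\tilde f_2^2$ must be interpreted carefully where $\infty$ is involved, so I would restrict attention to the realcompactification-style subspaces where the extending functions stay real, or handle the point $\infty$ by a continuity/limit argument. Since the paper explicitly states this is a \emph{simple standard} lemma and omits the proof, I expect that in $\mathbf{ZF}$ no choice is needed here at all: everything reduces to the single extension property of $v_{\mathbb{R}(\infty)}\mathbf{X}$ together with the density of $X$ and Proposition \ref{s2:p7}, both of which are available in $\mathbf{ZF}$.
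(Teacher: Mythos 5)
The paper explicitly omits its own proof of this lemma as ``simple standard'', so I can only judge your argument on its own terms, and it has a genuine gap at the decisive step. The parts asserting $\cl_{\mathbf{Y}}(Z_i)\subseteq\tilde f_i^{-1}[\{0\}]$ and the identity $\tilde f=\tilde f_1^2+\tilde f_2^2$ on $\mathbf{Y}$ are fine (the latter can be made rigorous because $(a,b)\mapsto a^2+b^2$ does extend continuously to $\mathbb{R}(\infty)\times\mathbb{R}(\infty)$ by sending any pair containing $\infty$ to $\infty$, so Proposition \ref{s2:p7} applies). What fails is the inclusion you need to close the argument, namely $\tilde f_1^{-1}[\{0\}]\cap\tilde f_2^{-1}[\{0\}]\subseteq\cl_{\mathbf{Y}}(Z(f))$, equivalently $Z(\tilde f)\subseteq\cl_{\mathbf{Y}}(Z(f))$. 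Your justification --- that the zero-sets agree on the dense set $X$ --- only yields the opposite inclusion $\cl_{\mathbf{Y}}(Z(f))\subseteq Z(\tilde f)$: a point $p$ with $\tilde f(p)=0$ need only be approximated by points of $X$ where $f$ is \emph{small}, not by points where $f$ vanishes. Concretely, for $\mathbf{X}=\mathbb{N}$ discrete and $f(n)=1/n$ one has $Z(f)=\emptyset$, hence $\cl_{\mathbf{Y}}(Z(f))=\emptyset$, while $\tilde f$ vanishes at every point of $v_{\mathbb{R}(\infty)}\mathbb{N}\setminus\mathbb{N}$ (non-empty, e.g.\ under $\mathbf{BPI}$). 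So the asserted equality $\cl_{\mathbf{Y}}(Z(f))=\tilde f^{-1}[\{0\}]$ is false in general and the reduction collapses.

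The standard repair does not go through a single global function but localizes at the offending point. Suppose $p\in\cl_{\mathbf{Y}}(Z_1)\cap\cl_{\mathbf{Y}}(Z_2)$ but $p\notin\cl_{\mathbf{Y}}(Z_1\cap Z_2)$. Since $\mathbf{Y}$ is a subspace of a power of the metrizable compactum $\mathbb{R}(\infty)$, it is Tychonoff in $\mathbf{ZF}$, so there is a zero-set neighbourhood $V$ of $p$ in $\mathbf{Y}$ disjoint from $\cl_{\mathbf{Y}}(Z_1\cap Z_2)$. Then $V\cap Z_1$ and $Z_2$ are disjoint zero-sets of $\mathbf{X}$, hence completely separated by some $g\in C(\mathbf{X},[0,1])$; extending $g$ over $\mathbf{Y}$ (this is exactly where the Hewitt property of $v_{\mathbb{R}(\infty)}\mathbf{X}$ enters) gives disjoint closed sets $\tilde g^{-1}[\{0\}]\supseteq\cl_{\mathbf{Y}}(V\cap Z_1)$ and $\tilde g^{-1}[\{1\}]\supseteq\cl_{\mathbf{Y}}(Z_2)$, yet $p$ lies in both, because $p\in\inter_{\mathbf{Y}}(V)\cap\cl_{\mathbf{Y}}(Z_1)$ forces $p\in\cl_{\mathbf{Y}}(\inter_{\mathbf{Y}}(V)\cap Z_1)$. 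This contradiction proves the nontrivial inclusion, uses no choice, and isolates the two ingredients your proposal is missing: passage to a zero-set neighbourhood of $p$, and the fact that \emph{disjoint} zero-sets of $\mathbf{X}$ have disjoint closures in $\mathbf{Y}$.
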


For our convenience, we slightly reformulate Theorem \ref{s1:t7}(ii) as follows. 

\begin{theorem}
	\label{s5:t3} (Theorem \ref{s1:t7}(ii).)
	$[\mathbf{ZF}]$ For every Tychonoff space $\mathbf{X}$, the following conditions are equivalent:
	\begin{enumerate}
	\item[(i)] $\mathbf{X}$ is realcompact;
	\item[(ii)]  every $z$-ultrafilter in $\mathbf{X}$ with the weak countable intersection property is fixed.
	\end{enumerate} 
\end{theorem}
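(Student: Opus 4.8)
The plan is to mirror the $\mathbf{ZF}$-proof of the Herrlich--Chew theorem (Theorem \ref{s3:t2}), working inside $\mathbf{Y}=v_{\mathbb{R}(\infty)}\mathbf{X}$ with $\mathbf{X}$ identified with its dense copy $e_J[X]$ for $J=C(\mathbf{X},\mathbb{R}(\infty))$. The roles played there by the space $\mathbb{N}(\infty)$ and by condition $(a)$ are taken over here by the compact Hausdorff space $\mathbb{R}(\infty)$ (compact in $\mathbf{ZF}$, being a one-point compactification) and by Lemma \ref{s5:l2}. Theorem \ref{s2:t17} is the bridge to realcompactness: a point of $v_{\mathbb{R}(\infty)}X\setminus X$ witnesses failure of realcompactness exactly when no $f\in C(\mathbf{Y},\mathbb{R}(\infty))$ with $f[X]\subseteq\mathbb{R}$ sends it to $\infty$. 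I would prove both implications in contrapositive form.

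For (i)$\Rightarrow$(ii) I assume $\mathbf{X}$ realcompact and let $\mathcal{U}$ be a free $z$-ultrafilter; the goal is to exhibit a functionally accessible countable subfamily of $\mathcal{U}$ with empty intersection, so that $\mathcal{U}$ fails the weak countable intersection property. The main obstacle is to produce a point $p\in\mathbf{Y}\setminus X$ lying in $\cl_{\mathbf{Y}}(Z)$ for every $Z\in\mathcal{U}$: the Herrlich--Chew argument chose, coordinatewise, the least point of a nonempty intersection, but $\mathbb{R}(\infty)$ is not well-orderable in $\mathbf{ZF}$, so that choice is unavailable. I would remove it by exploiting that a $z$-ultrafilter is prime: for each $f\in J$ the family $\mathcal{G}_f=\{F\subseteq\mathbb{R}(\infty):F\text{ closed},\ f^{-1}[F]\in\mathcal{U}\}$ is a prime filter of closed sets, hence, by compactness and Hausdorffness of $\mathbb{R}(\infty)$, its intersection is a single point $p_f$, determined canonically with no choice. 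Setting $p(f)=p_f$ and checking, again via primeness, that every basic neighbourhood of $p$ in $\mathbb{R}(\infty)^J$ meets $e_J[Z]$ for each $Z\in\mathcal{U}$, I get (taking $Z=X$) that $p\in\mathbf{Y}$ and, as $\mathbf{Y}$ is closed in $\mathbb{R}(\infty)^J$, that $p\in\bigcap_{Z\in\mathcal{U}}\cl_{\mathbf{Y}}(e_J[Z])$, while freeness of $\mathcal{U}$ forces $p\notin e_J[X]$. Theorem \ref{s2:t17} then supplies $f^{\ast}\in C(\mathbf{Y},\mathbb{R}(\infty))$ with $f^{\ast}(p)=\infty$ and $f^{\ast}[X]\subseteq\mathbb{R}$; writing $g=f^{\ast}|_X$, the zero-sets $Z_n=Z(\min(|g|-n,0))=\{x\in X:|g(x)|\geq n\}$ are functionally accessible, each lies in $\mathcal{U}$ (otherwise some member of $\mathcal{U}$ keeps $g$ bounded, forcing $f^{\ast}(p)\in\mathbb{R}$), and $\bigcap_{n}Z_n=\emptyset$ since $g$ is real-valued.

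For (ii)$\Rightarrow$(i) I argue the contrapositive: if $\mathbf{X}$ is not realcompact then Theorem \ref{s2:t17} yields $p_0\in v_{\mathbb{R}(\infty)}X\setminus X$ with $f(p_0)\in\mathbb{R}$ whenever $f\in C(\mathbf{Y},\mathbb{R}(\infty))$ and $f[X]\subseteq\mathbb{R}$. I set $\mathcal{F}=\{Z\in\mathcal{Z}(\mathbf{X}):p_0\in\cl_{\mathbf{Y}}(Z)\}$, which is a $z$-ultrafilter by Lemma \ref{s5:l2} together with a routine Tychonoff separation argument for maximality, and which is free because any $x_0\in X$ is separated from $p_0$ by a zero-set of $\mathcal{F}$ missing $x_0$. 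The crucial point is that $\mathcal{F}$ has the weak countable intersection property, and this is where functional accessibility does the work that $\mathbf{CMC}$ would otherwise be needed for: given a functionally accessible countable $\{Z_n\}\subseteq\mathcal{F}$ with $Z_n=Z(f_n)$, the single function $f=\sum_{n}2^{-n}\frac{|f_n|}{1+|f_n|}\in C(\mathbf{X})$ is available with no choice, is bounded, and satisfies $Z(f)=\bigcap_n Z_n$. If this intersection were empty then $1/f\in C(\mathbf{X})$ has real values and extends to some $w\in C(\mathbf{Y},\mathbb{R}(\infty))$ with $w(p_0)\in\mathbb{R}$, while the (uniformly convergent) extension $\tilde{f}$ satisfies $\tilde{f}(p_0)=\sum_n 2^{-n}\tilde{\phi}_n(p_0)=0$, where $\tilde{\phi}_n$ extends the summand $\phi_n=\frac{|f_n|}{1+|f_n|}$ and vanishes on $\cl_{\mathbf{Y}}(Z_n)\ni p_0$; since $\tilde{f}\cdot w=1$ on the dense set $X$ and both factors are finite near $p_0$, continuity forces $0=1$, a contradiction.

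I expect the two genuinely delicate steps to be exactly those highlighted: the canonical, choice-free construction of the cluster point $p$ from primeness of $z$-ultrafilters, replacing the well-ordering of $\mathbb{N}(\infty)$ used in Theorem \ref{s3:t2}; and the verification of the weak countable intersection property of $\mathcal{F}$ through the explicitly summable family $\{f_n\}$. The passage from the weak to the ordinary countable intersection property is precisely what is unavailable in plain $\mathbf{ZF}$, which is why the statement is phrased with the weak form; under $\mathbf{CMC}$ the two notions coincide by Lemma \ref{s5:l1}(ii), so this theorem recovers Hewitt's theorem in every model of $\mathbf{ZF+CMC}$.
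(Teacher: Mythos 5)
Your proposal is correct and follows essentially the same route as the paper's proof: both work inside $v_{\mathbb{R}(\infty)}\mathbf{X}$ with Theorem \ref{s2:t17} as the bridge to realcompactness, extract the functionally accessible sets $\{x\in X: |g(x)|\geq n\}$ from the function witnessing $f(p)=\infty$ for (i)$\Rightarrow$(ii), and for the converse form $\sum_{n}2^{-n}f_n$ from a functionally accessible family with empty intersection and invert it. The only genuine local difference is the choice-free selection of the cluster point $p$: you obtain each coordinate canonically as the unique point in the intersection of a prime filter of closed sets of $\mathbb{R}(\infty)$ (using primeness of $z$-ultrafilters), whereas the paper exploits the order of $\mathbb{R}$ and puts $p(j)=\max(A_j)$ when $\infty\notin A_j$; both devices are valid in $\mathbf{ZF}$ and the rest of the two arguments coincide up to contraposition.
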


\begin{proof}
	For a Tychonoff space $\mathbf{X}$, let $J=C(\mathbf{X}, \mathbb{R}(\infty))$, $\mathbf{Y}=\mathbb{R}(\infty)^J$ and let $P$ be the closure in $\mathbf{Y}$ of $e_J[X]$. Then, by Theorems \ref{s2:t8} and \ref{s2:t9}, $\mathbf{P}=v_{\mathbb{R}(\infty)}\mathbf{X}$.
	
	$(i)\to (ii)$ Assume that $\mathbf{X}$ is realcompact, and $\mathcal{U}$ is a free $z$-ultrafilter in $\mathbf{X}$. For every $j\in J$, let $\mathcal{A}_j=\{\cl_{\mathbb{R}(\infty)}\pi_j[e_J[U]]: U\in\mathcal{U}\}$ and $A_j=\bigcap\mathcal{A}_j$. Consider any $j\in J$. It follows from Lemma \ref{s5:l2} that the family $\mathcal{A}_j$ is centered. By the compactness of $\mathbb{R}(\infty)$, $A_j\neq\emptyset$. If $\infty\in A_j$, we put $p(j)=\infty$.  Suppose that $\infty\notin A_j$. Then $A_j$ is a non-empty compact subset of $\mathbb{R}$, so we can define $p(j)=\max(A_j)$. In this way, we have defined in $\mathbf{ZF}$ a point $p\in\mathbb{R}(\infty)^J$ such that, for every $j\in J$, $p(j)\in A_j$. Suppose that there exists $U_0\in\mathcal{U}$ such that $p\notin\cl_{\mathbf{Y}}(e_J[U_0])$. There exists a family $\{V_j: j\in J\}$ of zero-sets of $\mathbb{R}(\infty)$ such that the set $K=\{j\in J: V_j\neq \mathbb{R}(\infty)\}$ is finite and, for $V=\prod_{j\in J}V_j$, we have $p\in\inter_{\mathbf{Y}}(V)$ and $V\cap\cl_{\mathbf{Y}}(e_J[U_0])=\emptyset$. If $j\in K$, then $e_J^{-1}[\pi_j^{-1}[V_j]]\in\mathcal{Z}(\mathbf{X})$ and, for every $U\in\mathcal{U}$, $U\cap e_J^{-1}[\pi_j^{-1}[V_j]]\neq\emptyset$. Therefore, since  $\mathcal{U}$ is a $z$-ultrafilter, we deduce that, for every $j\in K$,  $e_J^{-1}[\pi_j^{-1}[V_j]]\in\mathcal{U}$. This implies that $e_J^{-1}[V]\in\mathcal{U}$. This is impossible because $e_J^{-1}[V]\cap U_0=\emptyset$. The contradiction obtained proves that $p\in\bigcap\{\cl_{\mathbf{Y}}e_J[U]: U\in\mathcal{U}\}$. Since $\mathcal{U}$ is free and $p\in P$, we deduce that $p\in P\setminus e_J[X]$.  It follows from the realcompactness of $\mathbf{X}$ and from Theorem \ref{s2:t17} that there exists $f\in C(\mathbf{P}, \mathbb{R}(\infty))$ such that $f[e_J{X}]\subseteq\mathbb{R}$ and $f(p)=\infty$. For every $n\in\mathbb{N}$, let $Z_n=f^{-1}[\mathbb{R}(\infty)\setminus (-n, n)]$ and $W_n=e_J^{-1}[Z_n]$. Then $Z_n$ are zero-sets in $\mathbf{P}$ such that for every $U\in\mathcal{U}$, $Z_n\cap e_J[U]\neq\emptyset$. This implies that, for every $n\in\mathbb{N}$, $W_n\in\mathcal{U}$. The family $\{W_n: n\in\mathbb{N}\}$ of zero-sets of $\mathbf{X}$ is functionally accessible. However, $\bigcap_{n\in\mathbb{N}}W_n=\emptyset$ because $\bigcap_{n\in\mathbb{N}}Z_n\cap e_J[X]=\emptyset$. Hence $\mathcal{U}$ does not have the weak countable intersection property.
	
	$(ii)\to (i)$ Now, let us assume (ii) and consider any point $p\in P\setminus e_J[X]$.
We define $\mathcal{F}: =\{Z\in\mathcal{Z}(\mathbf{X}): p\in\cl_{\mathbf{Y}}e_J[Z]\}$. Since $\mathbf{P}$ is Tychonoff, one can easily check that Lemma \ref{s5:l2} implies that $\mathcal{F}$ is a free  $z$-ultrafilter in $\mathbf{X}$. By (ii), $\mathcal{F}$ does not have the weak countable intersection property. Hence, there exists a family $\{f_n: n\in\mathbb{N}\}$ of functions from $C(\mathbf{X}, [0, 1])$ such that, for every $n\in\mathbb{N}$, $Z(f_n)\in\mathcal{F}$ but $\bigcap_{n\in\mathbb{N}}Z(f_n)=\emptyset$. We define the function $f\in C(\mathbf{X}, [0,1])$ by: $f(x)=\sum_{n=1}^{+\infty}\frac{f_n(x)}{2^n}$ for every $x\in X$. Since, for every $x\in X$, $f(x)\neq 0$, we have the function $\frac{1}{f}\in C(\mathbf{X}, \mathbb{R})$. Then $\frac{1}{f}\in C(\mathbf{X}, \mathbb{R}(\infty))$. There exists $g\in C(\mathbf{P}, \mathbb{R}(\infty))$ such that $g\circ e_J=\frac{1}{f}$. Then $g(p)=\infty$ and $g[e_J[X]]\subseteq\mathbb{R}$. Hence $\mathbf{X}$ is realcompact by Theorem \ref{s2:t17}.
\end{proof}

The following corollary is an immediate consequence of Theorem \ref{s5:t3}, Lemma \ref{s5:l1} and the fact that if $\mathbf{X}$ is a metrizable space, then every non-empty family of zero-sets of $\mathbf{X}$ is functionally accessible,

\begin{corollary}
	\label{s5:c4}
(Cf. Theorem \ref{s1:t7}(iii).)	$[\mathbf{ZF}]$ A metrizable space $\mathbf{X}$ is realcompact if and only if every $z$-ultrafilter in $\mathbf{X}$ with the countable intersection property is fixed. Furthermore, $\mathbf{CMC}$ implies that a Tychonoff space $\mathbf{X}$ is realcompact if and only if every $z$-ultrafilter in $\mathbf{X}$ with the countable intersection property is fixed.
\end{corollary}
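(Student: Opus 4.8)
The plan is to derive Corollary \ref{s5:c4} directly from Theorem \ref{s5:t3}, closing the gap between the weak countable intersection property and the ordinary countable intersection property in the two special cases where the families involved are automatically functionally accessible. The key observation is that the weak countable intersection property is, by Definition \ref{s1:d1}, exactly the countable intersection property restricted to \emph{functionally accessible} countable subfamilies. Thus, whenever every countable subfamily of a $z$-ultrafilter $\mathcal{U}$ is guaranteed to be functionally accessible, the two properties coincide for $\mathcal{U}$, and Theorem \ref{s5:t3} immediately yields the stated characterization with ``countable intersection property'' in place of ``weak countable intersection property.''

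First I would treat the metrizable case. Here the point is that if $\mathbf{X}$ is a metrizable space with metric $d$, then every closed set, and in particular every zero-set $Z$, is functionally presentable by the single explicit formula $f_Z(x) = d(x, Z) = \inf\{d(x,z) : z \in Z\}$, which lies in $C(\mathbf{X})$ and satisfies $Z(f_Z) = Z$ since $Z$ is closed. Because this assignment $Z \mapsto f_Z$ is a definable (not merely existential) operation, no choice is needed to obtain the witnessing family $\{f_Z : Z \in \mathcal{F}\}$ for any $\mathcal{F} \subseteq \mathcal{Z}(\mathbf{X})$. Consequently, in the metrizable case every non-empty family of zero-sets is functionally accessible outright in $\mathbf{ZF}$, so for a $z$-ultrafilter the weak and ordinary countable intersection properties are literally the same condition. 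Substituting this equivalence into Theorem \ref{s5:t3} gives the first assertion of the corollary with no further hypotheses.

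Next I would handle the $\mathbf{CMC}$ clause for general Tychonoff $\mathbf{X}$. The mechanism is different: here functional accessibility of a \emph{fixed} countable family is not automatic in $\mathbf{ZF}$, because selecting one witness $f_Z$ for each $Z$ from the non-empty set $\{f \in C(\mathbf{X}) : Z(f) = Z\}$ is a countable choice. This is precisely where Lemma \ref{s5:l1}(i) enters: assuming $\mathbf{CMC}$, every countable subfamily of $\mathcal{U}$ is functionally accessible, and Lemma \ref{s5:l1}(ii) records that under $\mathbf{CMC}$ the weak countable intersection property and the countable intersection property coincide for any $\mathcal{U} \subseteq \mathcal{Z}(\mathbf{X})$. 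Feeding this equivalence into Theorem \ref{s5:t3} yields that, under $\mathbf{CMC}$, $\mathbf{X}$ is realcompact if and only if every $z$-ultrafilter with the countable intersection property is fixed.

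Since both clauses are obtained by substituting a property-equivalence (functional accessibility automatic in the metrizable case, or forced by $\mathbf{CMC}$ via Lemma \ref{s5:l1} in the general case) into the already-established Theorem \ref{s5:t3}, there is essentially no independent difficulty to overcome; the corollary is a genuine ``immediate consequence.'' The only point demanding a moment's care is verifying that the distance function $f_Z(x) = d(x,Z)$ is defined by a fixed formula uniformly in $Z$ so that the witnessing family is obtained without any appeal to choice; I would state this explicitly to make clear why the metrizable half needs no weak choice hypothesis at all, in contrast to the general Tychonoff half, which does.
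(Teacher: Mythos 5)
Your proposal is correct and follows exactly the paper's own route: the paper derives the corollary as an immediate consequence of Theorem \ref{s5:t3} together with Lemma \ref{s5:l1} (for the $\mathbf{CMC}$ clause) and the observation that in a metrizable space every non-empty family of zero-sets is functionally accessible via the distance functions $d(\cdot,Z)$. Your explicit remark that the assignment $Z\mapsto d(\cdot,Z)$ is uniform once a compatible metric is fixed, so that no choice is needed, is precisely the point the paper leaves implicit.
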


\section{Several applications of the main results}
\label{s6}

It is well known that, in $\mathbf{ZFC}$, all regular Lindel\"of $T_1$-spaces are realcompact (see, e.g., \cite[Theorem 52]{hewitt},  \cite[Corollary 14]{nik}, \cite[Theorem 3.11.12]{en} and \cite[Theorem 8.2]{gj}). On the other hand, there are models of $\mathbf{ZF}$ in which some compact Hausdorff spaces are not realcompact because they are not Tychonoff (see, e.g., \cite{lauch}, \cite{gt} and \cite[Forms 78 and 155]{hr}). We do not know if there is a model of $\mathbf{ZF}$ in which a Tychonoff Lindel\"of space need not be realcompact (see Problem \ref{s7:4}). However, we can offer the following theorem.

\begin{theorem}
\label{s6:t2}
$[\mathbf{ZF}]$ 
\begin{enumerate}
\item[(i)] Every zero-dimensional Lindel\"of $T_1$-space is $\mathbb{N}$-compact, so also realcompact.
\item[(ii)] $\mathbf{CMC}$ implies that every Tychonoff Lindel\"of space is realcompact.
\end{enumerate}
\end{theorem}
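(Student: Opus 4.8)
The plan is to prove both items using the ultrafilter characterizations already established, namely Theorem~\ref{s3:t2} for item~(i) and Theorem~\ref{s5:t3} (together with Corollary~\ref{s5:c4}) for item~(ii). The strategy in each case is to take a free ultrafilter of the appropriate kind (clopen for the zero-dimensional case, $z$-ultrafilter for the Tychonoff case) that has the relevant intersection property, and to derive a contradiction with Lindel\"ofness by extracting from it a countable subfamily with empty intersection whose complements form an open cover admitting no finite subcover.

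For item~(i), I would let $\mathbf{X}$ be a zero-dimensional Lindel\"of $T_1$-space and, arguing by contraposition, suppose that $\mathbf{X}$ is not $\mathbb{N}$-compact. By Theorem~\ref{s3:t2} there is a free clopen ultrafilter $\mathcal{U}$ in $\mathbf{X}$ with the countable intersection property. Since $\bigcap\mathcal{U}=\emptyset$, the family $\{X\setminus U: U\in\mathcal{U}\}$ is an open cover of $X$ (here I use that the members of $\mathcal{U}$ are clopen, so their complements are open). By Lindel\"ofness there is a countable subcover, say indexed by $\{U_n: n\in\omega\}\subseteq\mathcal{U}$, so that $\bigcap_{n\in\omega}U_n=\emptyset$. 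This directly contradicts the countable intersection property of $\mathcal{U}$, completing the proof that $\mathbf{X}$ is $\mathbb{N}$-compact. Since every zero-dimensional $T_1$-space is Tychonoff and every $\mathbb{N}$-compact space is realcompact (as $\mathbb{N}$ embeds as a closed subspace of $\mathbb{R}$, hence $\mathbb{N}^J$ is a closed subspace of $\mathbb{R}^J$), realcompactness follows at once.

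For item~(ii), I would work under $\mathbf{CMC}$, let $\mathbf{X}$ be a Tychonoff Lindel\"of space, and again argue by contraposition: assume $\mathbf{X}$ is not realcompact. By Theorem~\ref{s5:t3} there is a free $z$-ultrafilter $\mathcal{U}$ in $\mathbf{X}$ with the weak countable intersection property; under $\mathbf{CMC}$, Lemma~\ref{s5:l1}(ii) tells us this is equivalent to the countable intersection property, so $\mathcal{U}$ has the countable intersection property. Now $\bigcap\mathcal{U}=\emptyset$, and since each $Z\in\mathcal{U}$ is a zero-set, its complement $X\setminus Z$ is a cozero-set, hence open; thus $\{X\setminus Z: Z\in\mathcal{U}\}$ is an open cover of $X$. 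By Lindel\"ofness there is a countable subfamily $\{Z_n: n\in\omega\}\subseteq\mathcal{U}$ with $\bigcap_{n\in\omega}Z_n=\emptyset$, contradicting the countable intersection property of $\mathcal{U}$.

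The main subtlety, and the reason $\mathbf{CMC}$ is needed in item~(ii) but not in item~(i), is the passage between the weak and the ordinary countable intersection property: in $\mathbf{ZF}$ alone a $z$-ultrafilter supplied by Theorem~\ref{s5:t3} carries only the weak countable intersection property, which constrains only functionally accessible countable subfamilies, and the countable subfamily $\{Z_n\}$ extracted from the Lindel\"of cover need not be functionally accessible without some choice. It is exactly Lemma~\ref{s5:l1}(ii), valid under $\mathbf{CMC}$, that bridges this gap by making every countable subfamily functionally accessible. I expect this to be the one delicate point; the remaining steps are routine, and item~(i) avoids the issue entirely because the clopen characterization in Theorem~\ref{s3:t2} refers to the genuine countable intersection property in $\mathbf{ZF}$.
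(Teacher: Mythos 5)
Your proposal is correct and follows essentially the same route as the paper: the paper's proof consists precisely of the observation that in a Lindel\"of space a family of closed sets with empty intersection cannot have the countable intersection property, combined with Theorem~\ref{s3:t2} for (i) and Corollary~\ref{s5:c4} (which is exactly your pairing of Theorem~\ref{s5:t3} with Lemma~\ref{s5:l1}(ii)) for (ii). Your explicit identification of the weak-versus-ordinary countable intersection property as the point where $\mathbf{CMC}$ enters matches the paper's reasoning.
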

\begin{proof}
We notice that if $\mathcal{U}$ be a non-empty family of closed sets of a Lindel\"of space such that  $\bigcap\mathcal{U}=\emptyset$, then $\mathcal{U}$ does not have the countable intersection property. Hence, (i) follows from Theorem \ref{s3:t2}, and (ii) is a consequence of Corollary \ref{s5:c4}.
\end{proof}

It has been known for a long time that every strongly zero-dimensional realcompact space is $\mathbb{N}$-compact in $\mathbf{ZFC}$ (see  \cite{mr1} and \cite{mr4}). To show that this result also hold in $\mathbf{ZF+CMC}$, we need the following lemma.

\begin{lemma}
\label{s6:l4}
$[\mathbf{ZF}]$ Let $\mathbf{X}$ be a strongly zero-dimensional realcompact space and let $\{Z_n: n\in\omega\}\subseteq\mathcal{Z}(\mathbf{X})$. Then $\mathbf{CMC}$ implies that there exists a family $\{C_n: n\in\omega\}$ of clopen sets in $\mathbf{X}$ such that $\bigcap_{n\in\omega}Z_n=\bigcap_{n\in\omega}C_n$ and, for every $m\in\omega$, $\bigcap_{i\in m+1}Z_i\subseteq C_m$.
\end{lemma}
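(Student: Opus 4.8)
The plan is to approximate the decreasing sequence of partial intersections $W_m:=\bigcap_{i\in m+1}Z_i$ by clopen sets from the outside, with thresholds shrinking to $0$. First I would record the elementary facts, all valid in $\mathbf{ZF}$: each $W_m$ is a zero-set (finite intersections of zero-sets are zero-sets), $W_{m+1}\subseteq W_m$, and $\bigcap_{m\in\omega}W_m=\bigcap_{n\in\omega}Z_n$. Using $\mathbf{CMC}$ together with Lemma \ref{s5:l1}(i), the countable family $\{Z_n:n\in\omega\}$ is functionally accessible, so I may fix a sequence $(f_n)_{n\in\omega}$ in $C(\mathbf{X})$ with $Z(f_n)=Z_n$. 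Setting $g_m:=\sum_{i\in m+1}|f_i|$ then produces, with no further choices, a sequence of non-negative functions in $C(\mathbf{X})$ satisfying $Z(g_m)=W_m$ and $g_m\leq g_{m+1}$.

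Next I would produce the clopen sets. For each $m$ put $B_m:=\{x\in X: g_m(x)\geq 2^{-m}\}$; this is a zero-set, since it equals $Z(\max(2^{-m}-g_m,0))$, and it is disjoint from $W_m=Z(g_m)$. By strong zero-dimensionality the set $S_m$ of all $C\in\mathcal{CO}(\mathbf{X})$ with $W_m\subseteq C\subseteq X\setminus B_m$ is non-empty. The crucial observation is that $S_m$ is closed under (non-empty) finite intersections, so from any finite non-empty $\mathcal{F}_m\subseteq S_m$ one canonically recovers the single clopen set $\bigcap\mathcal{F}_m\in S_m$. Hence I would apply $\mathbf{CMC}$ to $\{S_m:m\in\omega\}$ to obtain $(\mathcal{F}_m)_{m\in\omega}$ and define $C_m:=\bigcap\mathcal{F}_m$; this yields a genuine sequence of clopen sets with $W_m\subseteq C_m\subseteq\{x: g_m(x)<2^{-m}\}$.

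Finally I would verify the two required properties. The inclusion $W_m\subseteq C_m$ is immediate and gives both $\bigcap_{i\in m+1}Z_i\subseteq C_m$ and $\bigcap_n Z_n=\bigcap_m W_m\subseteq\bigcap_m C_m$. For the reverse inclusion, if $x\in\bigcap_m C_m$ then $g_m(x)<2^{-m}$ for every $m$; since $g_k\leq g_m$ whenever $k\leq m$, we get $g_k(x)<2^{-m}$ for all $m\geq k$, whence $g_k(x)=0$ and $x\in W_k$. As $k$ is arbitrary, $x\in\bigcap_k W_k=\bigcap_n Z_n$, so $\bigcap_m C_m=\bigcap_n Z_n$, as desired. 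The main obstacle is purely choice-theoretic rather than topological: both fixing the representing functions $f_n$ and choosing one clopen separator per index are steps that fail in plain $\mathbf{ZF}$, and the whole argument is arranged precisely so that $\mathbf{CMC}$ is enough — the closure of $S_m$ under finite intersections is exactly the device that converts the multiple-choice data of $\mathbf{CMC}$ into an honest sequence $(C_m)_{m\in\omega}$. I would also note that this construction appeals only to strong zero-dimensionality and $\mathbf{CMC}$; realcompactness, which is natural for the intended application (deducing $\mathbb{N}$-compactness of strongly zero-dimensional realcompact spaces), does not appear to be required for the stated conclusion of this lemma.
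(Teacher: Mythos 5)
Your proof is correct and follows essentially the same route as the paper's: represent the partial intersections as $Z(g_m)$ for an increasing sequence $g_m$ obtained via $\mathbf{CMC}$ and functional accessibility, separate $Z(g_m)$ from the superlevel set at threshold $2^{-m}$ by clopen sets using strong zero-dimensionality, and invoke $\mathbf{CMC}$ again to turn the non-empty separator families into an actual sequence (the paper combines the finitely many chosen separators by union where you use intersection, which is immaterial). Your closing remark is also accurate: the paper's argument likewise never uses realcompactness, which enters only in the application (Theorem \ref{s6:t5}).
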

\begin{proof}
Assuming $\mathbf{CMC}$, it follows from Lemma \ref{s5:l1} that there exists a family $\{f_n: n\in\omega\}$ such that, for every $n\in\omega$, $f_n\in C(\mathbf{X}, [0,1])$ and $Z_n=Z(f_n)$. For every $n\in\omega$, let  $g_n=\sum_{i\in n+1}f_i$. Then, for every $n\in\omega$, $\bigcap_{i\in n+1}Z_i=Z(g_n)$ and $g_n\leq g_{n+1}$. For every $n\in\omega$, let
 \[\mathcal{D}_n=\Bigl\{D\in\mathcal{CO}(\mathbf{X}): Z(g_n)\subseteq D\subseteq \Bigl\{x\in X: \frac{1}{2}\leq g_n(x)\Bigr\}\Bigr\}.\]
 \noindent Since $\mathbf{X}$ is strongly zero-dimensional, the families $\mathcal{D}_n$ are all non-empty. It follows from $\mathbf{CMC}$ that there exists a family $\{\mathcal{C}_n: n\in\omega\}$ of non-empty finite sets such that, for every $n\in\omega$, $\mathcal{C}_n\subseteq\mathcal{D}_n$. For every $n\in\omega$, let $C_n=\bigcup\mathcal{C}_n$. Then the sets $C_n$ are all clopen in $\mathbf{X}$ and, moreover, $Z(g_n)\subseteq C_n\subseteq\bigl\{x\in X: \frac{1}{2}\leq g_n(x)\bigr\}$. Then $\bigcap_{n\in\omega}Z_n\subseteq \bigcap_{n\in\omega}C_n$. Suppose that $x\in X\setminus\bigcap_{n\in\omega}Z_n$. Then there exists $m_0\in\omega$ such that $x\notin Z(g_{m_0})$. There exists $m_1\in\omega$ such that $m_0<m_1$ and $\frac{1}{2^{m_1}}<g_{m_0}(x)$. Then $\frac{1}{2^{m_1}}<g_{m_1}(x)$, so $x\notin C_{m_1}$. This shows that $\bigcap_{n\in\omega}C_n\subseteq\bigcap_{n\in\omega}Z_n$. Hence $\bigcap_{n\in\omega}C_n=\bigcap_{n\in\omega}Z_n$.
\end{proof}

\begin{theorem}
\label{s6:t5}
$[\mathbf{ZF+CMC}]$ Every strongly zero-dimensional realcompact space is $\mathbb{N}$-compact.
\end{theorem}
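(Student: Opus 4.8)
The plan is to prove that a strongly zero-dimensional realcompact space $\mathbf{X}$ is $\mathbb{N}$-compact by verifying the criterion of Theorem \ref{s3:t2}: I must show that every clopen ultrafilter $\mathcal{U}$ in $\mathbf{X}$ with the countable intersection property is fixed. So let $\mathcal{U}$ be such a clopen ultrafilter and assume toward a contradiction that $\bigcap\mathcal{U}=\emptyset$. The strategy is to transfer $\mathcal{U}$ to a $z$-ultrafilter on $\mathbf{X}$, apply the realcompactness of $\mathbf{X}$ via Hewitt's modified theorem (Theorem \ref{s5:t3}, or rather its $\mathbf{CMC}$-consequence Corollary \ref{s5:c4}), and derive a contradiction with the countable intersection property of $\mathcal{U}$.

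First I would build a $z$-filter from $\mathcal{U}$. Since every clopen set is a zero-set, $\mathcal{U}\subseteq\mathcal{Z}(\mathbf{X})$, and I let $\mathcal{F}$ be a $z$-ultrafilter containing $\mathcal{U}$ (under $\mathbf{CMC}$ one does not have full ultrafilter extension for free, so I would instead define $\mathcal{F}$ canonically, e.g. as the $z$-filter generated by $\mathcal{U}$, and check directly that it is a $z$-ultrafilter using that $\mathbf{X}$ is strongly zero-dimensional: given a zero-set $Z$ disjoint from no member of $\mathcal{U}$, strong zero-dimensionality lets me separate $Z$ from suitable zero-sets by clopen sets, forcing membership). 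The key claim here is that the passage from clopen sets to zero-sets preserves the countable intersection property in the relevant direction. This is exactly where Lemma \ref{s6:l4} enters: given a countable family $\{Z_n: n\in\omega\}\subseteq\mathcal{F}$, under $\mathbf{CMC}$ it produces clopen sets $\{C_n: n\in\omega\}$ with $\bigcap_{n\in\omega}Z_n=\bigcap_{n\in\omega}C_n$ and $\bigcap_{i\in m+1}Z_i\subseteq C_m$ for every $m$.

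With Lemma \ref{s6:l4} in hand, the main argument runs as follows. Suppose $\mathcal{F}$ failed to have the (weak) countable intersection property, witnessed by a functionally accessible countable subfamily $\{Z_n: n\in\omega\}$ with empty intersection. Applying Lemma \ref{s6:l4} gives clopen sets $C_n$ with $\bigcap_{n\in\omega}C_n=\emptyset$ and $\bigcap_{i\in m+1}Z_i\subseteq C_m$. The inclusion condition, together with the fact that each finite intersection $\bigcap_{i\in m+1}Z_i$ lies in the filter $\mathcal{F}$ (hence meets every $U\in\mathcal{U}$), should force each $C_m\in\mathcal{U}$ because $\mathcal{U}$ is a clopen \emph{ultra}filter. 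Then $\{C_m: m\in\omega\}$ is a countable subfamily of $\mathcal{U}$ with empty intersection, contradicting the countable intersection property of $\mathcal{U}$. Therefore $\mathcal{F}$ has the weak countable intersection property, so by Theorem \ref{s5:t3} (since $\mathbf{X}$ is realcompact) $\mathcal{F}$ is fixed; as $\mathcal{U}\subseteq\mathcal{F}$ and $\bigcap\mathcal{U}\supseteq\bigcap\mathcal{F}\neq\emptyset$, I conclude $\mathcal{U}$ is fixed, completing the proof via Theorem \ref{s3:t2}.

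The main obstacle I anticipate is the bookkeeping at the step where I argue $C_m\in\mathcal{U}$: I must ensure that the clopen set $C_m$ produced by Lemma \ref{s6:l4} genuinely belongs to the original clopen ultrafilter $\mathcal{U}$ and not merely to the larger $z$-ultrafilter $\mathcal{F}$. Because $C_m$ sandwiches the zero-set $\bigcap_{i\in m+1}Z_i$ which meets every member of $\mathcal{U}$, and $\mathcal{U}$ is an ultrafilter on $\mathcal{CO}(\mathbf{X})$, the clopen set $C_m$ cannot be separated from $\mathcal{U}$ by a disjoint clopen member, so $C_m\in\mathcal{U}$; making this rigorous is the delicate part, and it is precisely where strong zero-dimensionality and the clopen ultrafilter property must be used jointly. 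The remaining verifications — that $\mathcal{F}$ is indeed a $z$-ultrafilter and that $\mathbf{CMC}$ is available throughout to invoke Lemma \ref{s6:l4} and Corollary \ref{s5:c4} — are routine once this core transfer is established.
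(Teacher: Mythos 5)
Your proposal follows essentially the same route as the paper's proof: associate to the clopen ultrafilter $\mathcal{U}$ the $z$-ultrafilter of all zero-sets meeting every member of $\mathcal{U}$ (using strong zero-dimensionality to verify maximality and closure under finite intersections), transfer the countable intersection property via Lemma \ref{s6:l4} by noting that each clopen $C_m$ contains a member of the $z$-ultrafilter and therefore, meeting every member of $\mathcal{U}$, belongs to $\mathcal{U}$, and conclude with Theorem \ref{s5:t3}/Corollary \ref{s5:c4} and Theorem \ref{s3:t2}. The one adjustment needed is to define $\mathcal{F}$ as $\{Z\in\mathcal{Z}(\mathbf{X}): (\forall C\in\mathcal{U})\ C\cap Z\neq\emptyset\}$ (as the paper does) rather than as the $z$-filter generated by $\mathcal{U}$, since the upward closure of $\mathcal{U}$ in $\mathcal{Z}(\mathbf{X})$ need not be maximal (for the fixed clopen ultrafilter at $0$ in $\mathbb{Q}$, the zero-set $\{0\}$ meets every member but contains no clopen member of the ultrafilter); with that definition the rest of your argument coincides with the paper's.
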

\begin{proof}
Let $\mathbf{X}$ be a strongly zero-dimensional realcompact space. Consider any clopen ultrafilter $\mathcal{U}$ in $\mathbf{X}$. Let 
\[\mathcal{F}=\{Z\in\mathcal{Z}(\mathbf{X}): (\forall C\in\mathcal{U}) C\cap Z\neq\emptyset\}.\]
Of course, $\mathcal{U}\subseteq\mathcal{F}$. Let us check that $\mathcal{F}$ is a $z$-ultrafilter in $\mathbf{X}$.

Using the strong zero-dimensionality of $\mathbf{X}$, one can easily check that $\mathcal{F}$ is a $z$-ultrafilter in $\mathbf{X}$.
Let us assume that $\mathcal{U}$ has the countable intersection property and show that $\mathcal{F}$ also has the countable intersection property. To this aim, consider any family $\{Z_n: n\in\omega\}$ of members of $\mathcal{F}$. By $\mathbf{CMC}$ and Lemma \ref{s6:l4}, there exists a family $\{C_n: n\in\omega\}$ of clopen sets of $\mathbf{X}$ such that $\bigcap_{n\in\omega}Z_n=\bigcap_{n\in\omega}C_n$ and, for every $m\in\omega$, $\bigcap_{n\in m+1}Z_n\subseteq C_m$. Then, for every $m\in\omega$, $C_m\in\mathcal{F}$. In consequence, since $\mathcal{U}$ is a clopen ultrafilter in $\mathbf{X}$, we deduce that, for every $m\in\omega$, $C_m\in\mathcal{U}$.  Since $\mathcal{U}$ has the countable intersection property, we have $\bigcap_{n\in\omega}C_n\neq\emptyset$. Hence $\bigcap_{n\in\omega} Z_n\neq\emptyset$. Since $\mathbf{X}$ is realcompact, it follows from Corollary \ref{s5:c4} that $\bigcap\mathcal{F}\neq\emptyset$. Since $\mathcal{U}\subseteq\mathcal{F}$, we infer that $\bigcap\mathcal{U}\neq\emptyset$. This proves that $\mathbf{X}$ is $\mathbb{N}$-compact by Theorem \ref{s3:t2}.
\end{proof}

We recall that a subspace $\mathbf{Y}$ of a topological space $\mathbf{X}$ is $z$-\emph{embedded} in $\mathbf{X}$ if, for every $Z\in\mathcal{Z}(\mathbf{Y})$, there exists $\tilde{Z}\in\mathcal{Z}(\mathbf{X})$ such that $Z=\tilde{Z}\cap Y$. In \cite[Lemma 3.9]{Bl}, it was stated in $\mathbf{ZFC}$ that if a Tychonoff space $\mathbf{X}$ is a countable union of $z$-embedded realcompact subspaces, then $\mathbf{X}$ is realcompact. By proving Theorem \ref{s6:t6} below, we show that Lemma 3.9 of \cite{Bl} is provable in $\mathbf{ZF+CMC}$. In our proof, we clearly indicate where $\mathbf{CMC}$ is applied.

\begin{theorem}
	\label{s6:t6}
	$[\mathbf{ZF}]$ $\mathbf{CMC}$ implies that if a Tychonoff space $\mathbf{X}$ is a countable union of $z$-embedded realcompact subspaces, then $\mathbf{X}$ is realcompact.
\end{theorem}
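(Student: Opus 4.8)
The plan is to use the ultrafilter characterization of realcompactness that is available under $\mathbf{CMC}$, namely Corollary \ref{s5:c4}: since $\mathbf{X}$ is Tychonoff, it suffices to prove that every $z$-ultrafilter $\mathcal{U}$ in $\mathbf{X}$ with the countable intersection property is fixed. So I would write $X=\bigcup_{n\in\omega}X_n$ with each $\mathbf{X}_n$ a $z$-embedded realcompact (hence Tychonoff) subspace of $\mathbf{X}$, fix a $z$-ultrafilter $\mathcal{U}$ in $\mathbf{X}$ with the countable intersection property, assume toward a contradiction that $\bigcap\mathcal{U}=\emptyset$, and derive a contradiction. As a preliminary I would record, via Lemma \ref{s5:l1}(iii), that under $\mathbf{CMC}$ the family $\mathcal{Z}(\mathbf{X})$ is closed under countable intersections, and conclude that $\mathcal{U}$ itself is closed under countable intersections: if $\{Z_k:k\in\omega\}\subseteq\mathcal{U}$, then $\bigcap_{k}Z_k$ is a zero-set that meets every $W\in\mathcal{U}$ (apply the countable intersection property to $\{Z_k:k\in\omega\}\cup\{W\}$), so $\bigcap_{k}Z_k\in\mathcal{U}$ by maximality.

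The core of the argument is a dichotomy indexed by $n$. For each $n$, set $\mathcal{S}_n=\{Z\in\mathcal{U}:Z\cap X_n=\emptyset\}$. First I would treat the case $\mathcal{S}_n=\emptyset$, i.e. every member of $\mathcal{U}$ meets $X_n$. Then the trace $\mathcal{F}_n=\{Z\cap X_n:Z\in\mathcal{U}\}$ is a $z$-filter base on $\mathbf{X}_n$ whose finite intersections are all nonempty, and I let $\mathcal{V}_n$ be the $z$-filter on $\mathbf{X}_n$ it generates. The key point is that $\mathcal{V}_n$ is actually a $z$-ultrafilter: given $Z'\in\mathcal{Z}(\mathbf{X}_n)\setminus\mathcal{V}_n$, $z$-embeddedness lets me write $Z'=Z\cap X_n$ with $Z\in\mathcal{Z}(\mathbf{X})$; then $Z\notin\mathcal{U}$, so maximality of $\mathcal{U}$ yields $W\in\mathcal{U}$ with $W\cap Z=\emptyset$, whence $W\cap X_n\in\mathcal{V}_n$ is disjoint from $Z'$. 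Next I would verify that $\mathcal{V}_n$ has the countable intersection property: for countably many members, $\mathbf{CMC}$ lets me select, as finite intersections inside $\mathcal{U}$, representatives $Z_k\in\mathcal{U}$ with $Z_k\cap X_n$ contained in the $k$-th member; closure of $\mathcal{U}$ under countable intersections gives $\bigcap_{k}Z_k\in\mathcal{U}$, and since $\mathcal{S}_n=\emptyset$ this meets $X_n$, forcing the intersection of the chosen members to be nonempty. As $\mathbf{X}_n$ is realcompact, Corollary \ref{s5:c4} makes $\mathcal{V}_n$ fixed, and any point of $\bigcap\mathcal{V}_n\subseteq X_n$ then lies in every $Z\in\mathcal{U}$ (because $Z\cap X_n\in\mathcal{F}_n\subseteq\mathcal{V}_n$), contradicting $\bigcap\mathcal{U}=\emptyset$.

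Consequently, freeness of $\mathcal{U}$ forces $\mathcal{S}_n\neq\emptyset$ for every $n$. Here I would invoke $\mathbf{CMC}$ once more: for each $n$ choose a nonempty finite $\mathcal{G}_n\subseteq\mathcal{S}_n$ and set $Z_n=\bigcup\mathcal{G}_n$, a zero-set lying in $\mathcal{U}$ (a finite union of members of $\mathcal{U}$) with $Z_n\cap X_n=\emptyset$, so $Z_n\subseteq X\setminus X_n$. Then $\{Z_n:n\in\omega\}$ is a countable subfamily of $\mathcal{U}$ with $\bigcap_{n\in\omega}Z_n\subseteq X\setminus\bigcup_{n\in\omega}X_n=\emptyset$, contradicting the countable intersection property of $\mathcal{U}$. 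This closes the contradiction, so every $z$-ultrafilter with the countable intersection property in $\mathbf{X}$ is fixed, and $\mathbf{X}$ is realcompact by Corollary \ref{s5:c4}.

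I expect the main obstacle to be exactly the step that makes the argument survive in $\mathbf{ZF}$ rather than only in $\mathbf{ZFC}$: one must \emph{not} extend an arbitrary trace filter to a $z$-ultrafilter, since that would require an ultrafilter-type choice principle unavailable under $\mathbf{CMC}$ alone. Instead the trick is to observe, using $z$-embeddedness together with the maximality of $\mathcal{U}$, that the trace $\mathcal{V}_n$ is \emph{already} a $z$-ultrafilter on $\mathbf{X}_n$. The secondary care points are the two applications of $\mathbf{CMC}$ that convert ``for each $n$ there exists'' statements into genuine families, realized through finite intersections and finite unions taken inside the filter $\mathcal{U}$, and the repeated use of closure under countable intersections (for $\mathcal{U}$ and for each $\mathcal{V}_n$), which rests on Lemma \ref{s5:l1}(iii).
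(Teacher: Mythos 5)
Your proof is correct and follows essentially the same route as the paper's: reduce to Corollary \ref{s5:c4}, use $\mathbf{CMC}$ to find an $n_0$ with every member of $\mathcal{U}$ meeting $X_{n_0}$, observe via $z$-embeddedness and maximality that the trace on $X_{n_0}$ is already a $z$-ultrafilter, and transfer the countable intersection property using $\mathbf{CMC}$ together with Lemma \ref{s5:l1}(iii). The only differences are cosmetic (you take finite unions where the paper takes finite intersections of the $\mathbf{CMC}$-selected sets, and you phrase the trace as a generated filter), so no further comment is needed.
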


\begin{proof}
In what follows, we assume $\mathbf{CMC}$. Let $\mathbf{X}$ be a Tychonoff space such that $X=\bigcup_{n\in\mathbb{N}}X_n$ where, for every $n\in\mathbb{N}$, the subspace $\mathbf{X}_n$ of $\mathbf{X}$ is realcompact and $z$-embedded in $\mathbf{X}$. Let $\mathcal{U}$ be a $z$-ultrafilter in $\mathbf{X}$ such that $\mathcal{U}$ has the countable intersection property. Suppose that, for every $n\in\mathbb{N}$, the set $\mathcal{U}_n=\{U\in\mathcal{U}: U\cap X_n=\emptyset\}$ is non-empty. By $\mathbf{CMC}$, there exists $\psi\in\prod_{n\in\mathbb{N}}([\mathcal{U}_n]^{<\omega}\setminus\{\emptyset\})$. Then, for every $n\in\mathbb{N}$, the set $U_n=\bigcap\psi(n)$ is a member of $\mathcal{U}$ such that $U_n\cap X_n=\emptyset$. Since $\bigcap_{n\in\mathbb{N}}U_n=\emptyset$, we obtain a contradiction with the countable intersection property of $\mathcal{U}$. Therefore, there exists $n_0\in\mathbb{N}$ such that $\mathcal{U}_{n_0}=\emptyset$. Let $\mathcal{F}=\{U\cap X_{n_0}: U\in\mathcal{U}\}$. Then $\mathcal{F}$ is a $z$-filter in $\mathbf{X}_{n_0}$. Since $\mathbf{X}_{n_0}$ is $z$-embedded in $\mathbf{X}$, $\mathcal{F}$ is a $z$-ultrafilter in $\mathbf{X}_{n_0}$. For every $n\in\mathbb{N}$, let $F_n\in\mathcal{F}$ and $\mathcal{W}_n=\{U\in\mathcal{U}: F_n=U\cap X_{n_0}\}$. Since $\mathcal{U}$ is closed under finite intersections, in much the same way,  as in the proof that, for some $n\in\mathbb{N}$,  $\mathcal{U}_n=\emptyset$, we can deduce from $\mathbf{CMC}$ that there exists a family $\{A_n: n\in\mathbb{N}\}$ of members of $\mathcal{U}$ such that, for every $n\in\mathbb{N}$, $F_n=A_n\cap X_{n_0}$. Let $A=\bigcap_{n\in\mathbb{N}}A_n$. It follows from $\mathbf{CMC}$ and Lemma \ref{s5:l1}(iii) that $A\in\mathcal{Z}(\mathbf{X})$. Since the $z$-ultrafilter $\mathcal{U}$ has the countable intersection property, we infer that $A\in\mathcal{U}$. Hence $A\cap X_{n_0}\neq\emptyset$, so $\bigcap_{n\in\mathbb{N}}F_n\neq\emptyset$. Therefore, $\mathcal{F}$ has the countable intersection property. Since $\mathbf{X}_{n_0}$ is realcompact, the $z$-ultrafilter $\mathcal{F}$ is fixed by Corollary \ref{s5:c4}. Hence $\mathcal{U}$ is also fixed. In the light of Theorem \ref{s5:t3}, $\mathbf{X}$ is realcompact.
\end{proof}

To establish an accurate modification of Theorem  \ref{s6:t6} for $\mathbb{N}$-compactness, we need the following definition.

\begin{definition}
	\label{s6:d7}
	A subspace $\mathbf{S}$ of a topological space $\mathbf{X}$ will be called:
	\begin{enumerate}
		\item[(i)] $\mathbf{2}$-\emph{embedded} in $\mathbf{X}$ if, for every $A\in\mathcal{CO}(\mathbf{S})$, there exists $B\in\mathcal{CO}(\mathbf{X})$ such that $A=B\cap S$;
		\item[(ii)] $c_{\delta}$-\emph{embedded} in $\mathbf{X}$ if, for every $A\in\mathcal{CO}_{\delta}(\mathbf{S})$, there exists $B\in\mathcal{CO}_{\delta}(\mathbf{X})$ such that $A=B\cap S$.
	\end{enumerate}
\end{definition}

The concept of $\mathbf{2}$-embeddability is not new. That a subspace $\mathbf{S}$ of a topological space $\mathbf{X}$ is $\mathbf{2}$-embedded in $\mathbf{X}$ is equivalent to the condition asserting that, for every $f\in C(\mathbf{S},\mathbf{2})$, there exists $\tilde{f}\in C(\mathbf{X}, \mathbf{2})$ with $\tilde{f}\upharpoonright S=f$.

\begin{proposition}
	\label{s6:p8}
	$[\mathbf{ZF}]$  $\mathbf{CMC}$ implies that, for every topological space $\mathbf{X}$, the following conditions are satisfied:
	\begin{enumerate}
		\item[(i)]  for every $c_{\delta}$-ultrafilter in $\mathbf{X}$, it holds that $\mathcal{U}$ has the countable intersection property if and only if $\mathcal{U}$ is closed under countable intersections; 
		\item[(ii)]  for every subspace $\mathbf{S}$ of $\mathbf{X}$, it holds that if $\mathbf{S}$ is $\mathbf{2}$-embedded in $\mathbf{X}$, then it is also $c_{\delta}$-embedded in $\mathbf{X}$.
	\end{enumerate}
\end{proposition}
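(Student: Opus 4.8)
The plan is to treat (i) and (ii) independently. For (i) the only nontrivial direction is handled by Lemma \ref{s5:l1}(iii) together with the defining maximality property of a $c_\delta$-ultrafilter; for (ii) the work is to convert the finite choices produced by $\mathbf{CMC}$ into single clopen preimages, exploiting that $\mathcal{CO}(\mathbf{X})$ is closed under finite intersections.

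For (i), I would first dispose of the easy implication: if a $c_\delta$-ultrafilter $\mathcal{U}$ is closed under countable intersections, then for every $\{U_n:n\in\omega\}\subseteq\mathcal{U}$ the set $\bigcap_{n\in\omega}U_n$ lies in $\mathcal{U}$, hence is non-empty since $\emptyset\notin\mathcal{U}$; this needs no choice. For the converse, assume $\mathcal{U}$ has the countable intersection property and fix $\{U_n:n\in\omega\}\subseteq\mathcal{U}$. Invoking $\mathbf{CMC}$ through Lemma \ref{s5:l1}(iii), the family $\mathcal{CO}_\delta(\mathbf{X})$ is closed under countable intersections, so $W=\bigcap_{n\in\omega}U_n\in\mathcal{CO}_\delta(\mathbf{X})$. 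If $W\notin\mathcal{U}$, then, because $\mathcal{U}$ is an ultrafilter in $\mathcal{CO}_\delta(\mathbf{X})$, there is $V\in\mathcal{U}$ with $V\cap W=\emptyset$; but then $\{V\}\cup\{U_n:n\in\omega\}$ is a countable subfamily of $\mathcal{U}$ with empty intersection, contradicting the countable intersection property. Hence $W\in\mathcal{U}$, which is exactly the desired closure under countable intersections.

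For (ii), I would let $\mathbf{S}$ be $\mathbf{2}$-embedded in $\mathbf{X}$ and take $A\in\mathcal{CO}_\delta(\mathbf{S})$, writing $A=\bigcap\mathcal{A}$ for some non-empty countable $\mathcal{A}\subseteq\mathcal{CO}(\mathbf{S})$. For each $A'\in\mathcal{A}$ set $\mathcal{G}_{A'}=\{B\in\mathcal{CO}(\mathbf{X}):B\cap S=A'\}$; by $\mathbf{2}$-embeddedness each $\mathcal{G}_{A'}$ is non-empty. Applying $\mathbf{CMC}$ to the countable family $\{\mathcal{G}_{A'}:A'\in\mathcal{A}\}$ yields a multiple choice function assigning to each $A'$ a non-empty finite $\mathcal{H}_{A'}\subseteq\mathcal{G}_{A'}$. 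Setting $B_{A'}=\bigcap\mathcal{H}_{A'}$ then gives a clopen set of $\mathbf{X}$ with $B_{A'}\cap S=A'$, since a finite intersection of clopen sets is clopen and each member of $\mathcal{H}_{A'}$ restricts to $A'$. Finally $B=\bigcap_{A'\in\mathcal{A}}B_{A'}\in\mathcal{CO}_\delta(\mathbf{X})$ satisfies $B\cap S=\bigcap_{A'\in\mathcal{A}}(B_{A'}\cap S)=\bigcap\mathcal{A}=A$, witnessing that $\mathbf{S}$ is $c_\delta$-embedded.

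The hard part will be the passage, under $\mathbf{CMC}$, from multiple (finite) choices to a single well-defined clopen preimage for each member of $\mathcal{A}$; this is resolved precisely by closure of $\mathcal{CO}(\mathbf{X})$ under finite intersections, so that $\bigcap\mathcal{H}_{A'}$ is a legitimate single clopen witness. The only other technical point is re-indexing the countable family $\{\mathcal{G}_{A'}:A'\in\mathcal{A}\}$ by $\mathbb{N}$ to match the statement of $\mathbf{CMC}$, which is routine since $\mathcal{A}$ is countable.
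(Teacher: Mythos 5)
Your proof is correct and follows essentially the same route as the paper's: for (i) you use Lemma \ref{s5:l1}(iii) to place the countable intersection in $\mathcal{CO}_{\delta}(\mathbf{X})$ and then derive a contradiction from maximality exactly as the paper does, and for (ii) you apply $\mathbf{CMC}$ to the families of clopen preimages and collapse each finite choice set by intersection, which is the paper's argument verbatim up to indexing. No gaps.
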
 

\begin{proof}
	In what follows, we assume $\mathbf{CMC}$. Let $\mathbf{X}$ be a given topological space.
	
	(i) Suppose that $\mathcal{U}$ is a $c_{\delta}$-ultrafilter in $\mathbf{X}$. Let $\{F_n: n\in\omega\}$ be a family of members of $\mathcal{U}$ and  let $F=\bigcap_{n\in\omega}F_n$. By Lemma \ref{s5:l1}(iii), $\mathbf{CMC}$ implies that $\mathcal{CO}_{\delta}(\mathbf{X})$ is closed under countable intersections. Therefore, $F\in\mathcal{CO}_{\delta}(\mathbf{X})$. Suppose that  $F\notin\mathcal{U}$. Since $F\in\mathcal{CO}_{\delta}(\mathbf{X})$, and $\mathcal{U}$ is a $c_{\delta}$-ultrafilter in $\mathbf{X}$, we can fix $U_F\in\mathcal{U}$ such that $F\cap U_F=\emptyset$. Then $\{ U_F\}\cup\{F_n: n\in\omega\}$ witnesses that $\mathcal{U}$ does not have the countable intersection property. On the other hand, if a $c_{\delta}$-ultrafilter in $\mathbf{X}$ is closed under countable intersections, then it has the countable intersection property. This completes the proof of (i).
	
	(ii) Suppose that $\mathbf{S}$ is a $\mathbf{2}$-embedded subspace of $\mathbf{X}$. Let $C\in\mathcal{CO}_{\delta}(\mathbf{S})$. Let $\{C_n: n\in\omega\}$ be a family of members of $\mathcal{CO}(\mathbf{S})$ such that $C=\bigcap_{n\in\omega}C_n$. Since $\mathbf{S}$ is $\mathbf{2}$-embedded in $\mathbf{X}$, for every $n\in\omega$, the family 
	\[\mathcal{D}_n=\{D\in\mathcal{CO}(\mathbf{X}): C_n=D\cap S\}\]
	\noindent is non-empty. By $\mathbf{CMC}$, there exists $\psi\in\prod_{n\in\omega}([\mathcal{D}_n]^{<\omega}\setminus\{\emptyset\})$. For every $n\in\omega$, let $D_n=\bigcap\psi(n)$. Let $D=\bigcap_{n\in\omega}D_n$. Since, for every $n\in\omega$, $D_n\in\mathcal{CO}(\mathbf{X})$, we have $D\in\mathcal{CO}_{\delta}(\mathbf{X})$. It is obvious that $D\cap S=C$, so (ii) holds.
\end{proof}

We omit a simple proof of the following proposition.

\begin{proposition}
	\label{s6:p9}
	$[\mathbf{ZF}]$
	Let $\mathbf{S}$ be a $c_{\delta}$-embedded (respectively, $\mathbf{2}$-embedded) subspace of a topological space $\mathbf{X}$. Let $\mathcal{F}$ be a filter in $\mathcal{CO}_{\delta}(\mathbf{S})$ (respectively, in $\mathcal{CO}(\mathbf{S})$) and let $\mathcal{U}$ be an ultrafilter in $\mathcal{CO}_{\delta}(\mathbf{X})$ (respectively, in $\mathcal{CO}(\mathbf{X})$). Then:
	\begin{enumerate} 
		\item[(i)] the family 
		\[\mathcal{F}_X=\{V\in\mathcal{CO}_{\delta}(\mathbf{X})\ (\text{respectively,}\ V\in\mathcal{CO}(\mathbf{X})): V\cap S\in\mathcal{F}\}\]
		\noindent is a filter in $\mathcal{CO}_{\delta}(\mathbf{X})$ (respectively, in $\mathcal{CO}(\mathbf{X})$);
		\item[(ii)] if, for every $U\in\mathcal{U}$, $U\cap S\neq\emptyset$, then the family 
		$$\mathcal{U}_S=\{U\cap S: U\in\mathcal{U}\}$$
		is an ultrafilter in $\mathcal{CO}_{\delta}(\mathbf{S})$ (respectively, in $\mathcal{CO}(\mathbf{S})$).  
	\end{enumerate} 
\end{proposition}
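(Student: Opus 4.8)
The plan is to treat both cases uniformly by writing $\mathcal{R}(\mathbf{X})$ for the relevant family ($\mathcal{CO}(\mathbf{X})$ in the $\mathbf{2}$-embedded case and $\mathcal{CO}_{\delta}(\mathbf{X})$ in the $c_{\delta}$-embedded case), and similarly $\mathcal{R}(\mathbf{S})$ on the subspace. Two structural facts drive everything, and I would record them first. First, each of these families is closed under finite intersections and finite unions and contains the whole space; for $\mathcal{CO}_{\delta}(\mathbf{X})$ the closure under finite unions follows from $\bigl(\bigcap_n A_n\bigr)\cup\bigl(\bigcap_m B_m\bigr)=\bigcap_{n,m}(A_n\cup B_m)$, a reindexing of a countable-by-countable family that is harmless in $\mathbf{ZF}$. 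Second, the trace map $t\colon V\mapsto V\cap S$ carries $\mathcal{R}(\mathbf{X})$ into $\mathcal{R}(\mathbf{S})$ and preserves $\cap$, $\cup$, $\subseteq$, and the top element (note $t(X)=S$); the embedding hypothesis of Definition \ref{s6:d7} says precisely that $t$ maps $\mathcal{R}(\mathbf{X})$ \emph{onto} $\mathcal{R}(\mathbf{S})$.

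For part (i) the claim is purely formal and needs no embedding assumption: $\mathcal{F}_X=\{V\in\mathcal{R}(\mathbf{X}):t(V)\in\mathcal{F}\}$ is the preimage of the filter $\mathcal{F}$ under the meet- and order-preserving map $t$. I would check the filter axioms directly. Since $t(X)=S\in\mathcal{F}$ we get $X\in\mathcal{F}_X$, so $\mathcal{F}_X\neq\emptyset$; since $t(\emptyset)=\emptyset\notin\mathcal{F}$ we get $\emptyset\notin\mathcal{F}_X$; if $V_1,V_2\in\mathcal{F}_X$ then $(V_1\cap V_2)\cap S=(V_1\cap S)\cap(V_2\cap S)\in\mathcal{F}$, so $V_1\cap V_2\in\mathcal{F}_X$; and if $V\in\mathcal{F}_X$, $W\in\mathcal{R}(\mathbf{X})$ and $V\subseteq W$, then $V\cap S\subseteq W\cap S$ forces $W\cap S\in\mathcal{F}$, i.e. $W\in\mathcal{F}_X$.

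For part (ii) the embedding hypothesis finally enters. I would first verify that $\mathcal{U}_S=t[\mathcal{U}]$ is a filter in $\mathcal{R}(\mathbf{S})$: it is a subfamily of $\mathcal{R}(\mathbf{S})$, it omits $\emptyset$ exactly because of the standing hypothesis that $U\cap S\neq\emptyset$ for all $U\in\mathcal{U}$, and it is closed under finite intersections since $(U_1\cap S)\cap(U_2\cap S)=(U_1\cap U_2)\cap S$. For upward closure, given $U\cap S\subseteq A$ with $A\in\mathcal{R}(\mathbf{S})$, I use the embedding to pick $B\in\mathcal{R}(\mathbf{X})$ with $B\cap S=A$; then $U\cup B\in\mathcal{R}(\mathbf{X})$ lies in $\mathcal{U}$ because it contains $U$, and $(U\cup B)\cap S=(U\cap S)\cup A=A$, so $A\in\mathcal{U}_S$. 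For maximality I would use the choice-free characterization of ultrafilters in a family closed under finite intersections: $\mathcal{U}$ is an ultrafilter if and only if for every $Q\in\mathcal{R}(\mathbf{X})$ either $Q\in\mathcal{U}$ or some $U\in\mathcal{U}$ satisfies $Q\cap U=\emptyset$. Given $A\in\mathcal{R}(\mathbf{S})$ with $A\notin\mathcal{U}_S$, lift it to $B\in\mathcal{R}(\mathbf{X})$ with $B\cap S=A$; then $B\notin\mathcal{U}$, since otherwise $A=B\cap S\in\mathcal{U}_S$, so there is $U\in\mathcal{U}$ with $B\cap U=\emptyset$, whence $A\cap(U\cap S)=(B\cap U)\cap S=\emptyset$ with $U\cap S\in\mathcal{U}_S$.

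The only genuinely delicate point is staying within $\mathbf{ZF}$: I must not invoke Zorn's lemma or any choice principle, so maximality of $\mathcal{U}_S$ has to be obtained by transporting the \emph{internal} ultrafilter dichotomy of $\mathcal{U}$ across $t$, rather than by extending $\mathcal{U}_S$ to a maximal filter. The standing hypothesis $U\cap S\neq\emptyset$ is precisely what keeps the construction proper (it rules out $\emptyset\in\mathcal{U}_S$), and $c_{\delta}$-/$\mathbf{2}$-embeddedness is precisely what lets every member of $\mathcal{R}(\mathbf{S})$ be realized as a trace, which is the lifting step used both in upward closure and in maximality.
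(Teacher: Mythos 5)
The paper gives no proof of this proposition (it is explicitly omitted as ``simple''), so there is nothing to compare against; your argument is a correct and complete version of the standard verification the authors had in mind. The two points that actually need care are both handled properly: closure of $\mathcal{CO}_{\delta}(\mathbf{X})$ under finite unions via $\bigl(\bigcap_n A_n\bigr)\cup\bigl(\bigcap_m B_m\bigr)=\bigcap_{n,m}(A_n\cup B_m)$ involves only finitely many choices of representations and is therefore choice-free, and maximality of $\mathcal{U}_S$ is obtained by transporting the internal dichotomy ``$Q\in\mathcal{U}$ or some $U\in\mathcal{U}$ has $Q\cap U=\emptyset$'' through the lifting supplied by $c_{\delta}$-/$\mathbf{2}$-embeddedness, rather than by any maximal-filter extension; this is exactly the form of the ultrafilter property the paper itself uses elsewhere (e.g.\ in the proof of Theorem \ref{s4:t2}). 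No changes needed.
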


\begin{theorem}
	\label{s6:t10}
	$[\mathbf{ZF}]$ Let $\mathbf{X}$ be a zero-dimensional  $T_1$-space. Let $\{\mathbf{S}_n: n\in\omega\}$ be a family of $c_{\delta}$-embedded $\mathbb{N}$-compact subspaces of $\mathbf{X}$ such that $X=\bigcup_{n\in\omega}S_n$. Then $\mathbf{CMC}$ implies that $\mathbf{X}$ is $\mathbb{N}$-compact. 
\end{theorem}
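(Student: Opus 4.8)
The plan is to mirror the proof of Theorem \ref{s6:t6}, replacing zero-sets by $c_{\delta}$-sets, $z$-ultrafilters by $c_{\delta}$-ultrafilters, and Corollary \ref{s5:c4} by the $c_{\delta}$-modification of the Herrlich-Chew theorem (Theorem \ref{s4:t2}). Assuming $\mathbf{CMC}$, I would fix an arbitrary $c_{\delta}$-ultrafilter $\mathcal{U}$ in $\mathbf{X}$ with the countable intersection property and aim to show that $\mathcal{U}$ is fixed; the conclusion that $\mathbf{X}$ is $\mathbb{N}$-compact then follows from Theorem \ref{s4:t2}(ii).

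First I would locate an index $n_0\in\omega$ such that every member of $\mathcal{U}$ meets $S_{n_0}$. Arguing by contradiction, suppose that for each $n\in\omega$ the set $\mathcal{U}_n=\{U\in\mathcal{U}: U\cap S_n=\emptyset\}$ is non-empty. By $\mathbf{CMC}$ there is a family $\{\mathcal{F}_n: n\in\omega\}$ with each $\mathcal{F}_n$ a non-empty finite subfamily of $\mathcal{U}_n$; putting $U_n=\bigcap\mathcal{F}_n$ and using that $\mathcal{U}$, being a filter, is closed under finite intersections, we get $U_n\in\mathcal{U}$ and $U_n\cap S_n=\emptyset$. Since $X=\bigcup_{n\in\omega}S_n$, it follows that $\bigcap_{n\in\omega}U_n=\emptyset$, contradicting the countable intersection property of $\mathcal{U}$. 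Hence such an $n_0$ exists, and by Proposition \ref{s6:p9}(ii) the family $\mathcal{F}=\{U\cap S_{n_0}: U\in\mathcal{U}\}$ is a $c_{\delta}$-ultrafilter in $\mathbf{S}_{n_0}$, because $\mathbf{S}_{n_0}$ is $c_{\delta}$-embedded in $\mathbf{X}$.

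Next I would verify that $\mathcal{F}$ has the countable intersection property. Given $\{F_n: n\in\omega\}\subseteq\mathcal{F}$, each set $\{U\in\mathcal{U}: U\cap S_{n_0}=F_n\}$ is non-empty, so a second application of $\mathbf{CMC}$ (again selecting finite subfamilies and intersecting inside the filter) yields $A_n\in\mathcal{U}$ with $A_n\cap S_{n_0}=F_n$. Since $\mathcal{U}$ has the countable intersection property, Proposition \ref{s6:p8}(i) shows that $\mathcal{U}$ is closed under countable intersections, whence $A=\bigcap_{n\in\omega}A_n\in\mathcal{U}$. By the choice of $n_0$ we have $A\cap S_{n_0}\neq\emptyset$, and $A\cap S_{n_0}=\bigcap_{n\in\omega}(A_n\cap S_{n_0})=\bigcap_{n\in\omega}F_n$, so $\mathcal{F}$ has the countable intersection property. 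As $\mathbf{S}_{n_0}$ is $\mathbb{N}$-compact, Theorem \ref{s4:t2}(i) forces $\mathcal{F}$ to be fixed; since $\bigcap\mathcal{F}=(\bigcap\mathcal{U})\cap S_{n_0}\subseteq\bigcap\mathcal{U}$, the ultrafilter $\mathcal{U}$ is fixed as well, and Theorem \ref{s4:t2}(ii) then gives that $\mathbf{X}$ is $\mathbb{N}$-compact.

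The genuinely routine calculations are the verifications that the various finite intersections land back in $\mathcal{U}$ and restrict correctly to $S_{n_0}$. The step I expect to require the most care is the disciplined deployment of $\mathbf{CMC}$: both the location of $n_0$ and the selection of the representatives $A_n$ must be carried out by choosing finite subfamilies and intersecting them, since no single-valued choice is available in $\mathbf{ZF}$, and the closure of $\mathcal{U}$ under finite intersections is precisely what makes this substitution legitimate. A secondary point worth pinning down is that $\mathcal{F}$ is genuinely an ultrafilter, not merely a filter, in $\mathcal{CO}_{\delta}(\mathbf{S}_{n_0})$; this is exactly where the $c_{\delta}$-embeddedness of $\mathbf{S}_{n_0}$ enters, via Proposition \ref{s6:p9}(ii).
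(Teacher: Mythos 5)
Your proposal is correct and follows essentially the same route as the paper's proof: the paper likewise locates $n_0$ by the $\mathbf{CMC}$ argument borrowed from Theorem \ref{s6:t6}, passes to the trace ultrafilter on $\mathbf{S}_{n_0}$ via Proposition \ref{s6:p9}(ii), uses Proposition \ref{s6:p8}(i) to transfer the countable intersection property, and concludes with Theorem \ref{s4:t2}. The details you spell out (finite subfamilies intersected inside the filter, and the choice of the representatives $A_n$) are exactly the ones the paper leaves implicit by its reference to the proof of Theorem \ref{s6:t6}.
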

\begin{proof}
	Assuming $\mathbf{CMC}$, we consider any $c_{\delta}$-ultrafilter in $\mathbf{X}$ such that $\mathcal{U}$ has the countable intersection property. It follows from Proposition \ref{s6:p8}(i) that $\mathcal{U}$ is closed under countable intersections. In much the same way, as in the proof of Theorem \ref{s6:t6}, we can show that there exists $n_0\in\omega$ such that, for every $U\in\mathcal{U}$, $U\cap S_{n_0}\neq\emptyset$. By Proposition \ref{s6:p9}(ii), the family $\mathcal{U}_{n_0}=\{U\cap S_{n_0}: U\in\mathcal{U}\}$ is a $c_{\delta}$-ultrafilter in $\mathbf{S}_{n_0}$. Since $\mathcal{U}$ is closed under countable intersections, $\mathbf{CMC}$ implies that $\mathcal{U}_{n_0}$ is also closed under countable intersections. Since $\mathbf{S}_{n_0}$ is $\mathbb{N}$-compact, it follows from Theorem \ref{s4:t2} that $\mathcal{U}_{n_0}$ is fixed. This implies that $\mathcal{U}$ is fixed, so $\mathbf{X}$ is $\mathbb{N}$-compact by Theorem \ref{s4:t2}.
\end{proof}

\begin{corollary}
	\label{s6:c11}
	$[\mathbf{ZF}]$ Let $\mathbf{X}$ be a zero-dimensional $T_1$-space. Let $\{\mathbf{S}_n: n\in\omega\}$ be a family of $\mathbf{2}$-embedded $\mathbb{N}$-compact subspaces of $\mathbf{X}$ such that $X=\bigcup_{n\in\omega}S_n$. Then $\mathbf{CMC}$ implies that $\mathbf{X}$ is $\mathbb{N}$-compact. 
\end{corollary}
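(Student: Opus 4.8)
The plan is to reduce Corollary~\ref{s6:c11} to the already-established Theorem~\ref{s6:t10} by upgrading the hypothesis from $\mathbf{2}$-embeddedness to $c_{\delta}$-embeddedness. Working under $\mathbf{CMC}$, I would first invoke Proposition~\ref{s6:p8}(ii), which asserts precisely that every $\mathbf{2}$-embedded subspace of $\mathbf{X}$ is $c_{\delta}$-embedded in $\mathbf{X}$. Applying this to each $\mathbf{S}_n$, I obtain that every member of the family $\{\mathbf{S}_n : n\in\omega\}$ is a $c_{\delta}$-embedded $\mathbb{N}$-compact subspace of $\mathbf{X}$, while the covering condition $X=\bigcup_{n\in\omega}S_n$ is retained from the hypothesis.

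Once this is in place, the conclusion is immediate: the family $\{\mathbf{S}_n : n\in\omega\}$ now satisfies every hypothesis of Theorem~\ref{s6:t10}, so that theorem yields directly that $\mathbf{X}$ is $\mathbb{N}$-compact under $\mathbf{CMC}$. Thus the entire argument consists of the substitution of one embedding property for the other, followed by a single citation, and no new topological work is needed.

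The only point that requires care---and the sole place where I would expect a careful reader to pause---is to verify that applying Proposition~\ref{s6:p8}(ii) simultaneously to all the $\mathbf{S}_n$ does not covertly require more than $\mathbf{CMC}$. Here I would stress that Proposition~\ref{s6:p8}(ii) is a universally quantified theorem of $\mathbf{ZF+CMC}$ about arbitrary subspaces of $\mathbf{X}$: for a fixed $n$, the statement ``$\mathbf{S}_n$ is $c_{\delta}$-embedded'' is simply a property that holds, and recording that it holds for every $n$ is not an act of choice but an instance of universal instantiation. In particular, we are not selecting, for each $n$, a system of $c_{\delta}$-witnesses; the proof of Theorem~\ref{s6:t10} invokes $c_{\delta}$-embeddedness only for a single index $n_0$ at a time (through Proposition~\ref{s6:p9}(ii)), so the required witnesses are produced locally and no further choice is incurred. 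This confirms that the reduction is faithful and that $\mathbf{CMC}$ alone suffices for the conclusion.
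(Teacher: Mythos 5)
Your proposal is correct and is exactly the paper's argument: the paper's proof of this corollary reads, in its entirety, ``It suffices to apply Theorem~\ref{s6:t10} and Proposition~\ref{s6:p8}(ii).'' Your additional remark that no extra choice is incurred in passing from $\mathbf{2}$-embeddedness to $c_{\delta}$-embeddedness for all $n$ simultaneously is a sound clarification, not a deviation.
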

\begin{proof}
	It suffices to apply Theorem \ref{s6:t10} and Proposition \ref{s6:p8}(ii). 
\end{proof}

Theorem 8.16 of \cite{gj} and Fact 21 of \cite{nik} assert that every Tychonoff space expressible as the union of a realcompact subspace and a compact subspace is realcompact. The proof of it in \cite{gj} is a proof in $\mathbf{ZF}$ and can be easily modified to a $\mathbf{ZF}$-proof of Theorem \ref{s6:t12} given below; however, the axiom of choice is involved in its alternative proof in \cite{nik}. This is why, by applying our Theorems \ref{s3:t2} and \ref{s5:t3}, we are going to show how to modify the $\mathbf{ZFC}$-proof of Fact 21 in \cite{nik} to an alternative $\mathbf{ZF}$-proof of the following theorem.

\begin{theorem}
	\label{s6:t12}
	$[\mathbf{ZF}]$ Let $\mathbf{X}$ be a $T_1$-space. Suppose that $X=S\cup K$ where the set $K$ is compact in $\mathbf{X}$. Then the following conditions are satisfied:
	\begin{enumerate}
		\item[(i)](\cite[Theorem 8.16]{gj}) if $\mathbf{X}$ is completely regular and the subspace $\mathbf{S}$ of $\mathbf{X}$ is realcompact, then $\mathbf{X}$ is realcompact;
		\item[(ii)] if $\mathbf{X}$ is zero-dimensional and the subspace $\mathbf{S}$ of $\mathbf{X}$ is $\mathbb{N}$-compact, then $\mathbf{X}$ is $\mathbb{N}$-compact. 
	\end{enumerate}
\end{theorem}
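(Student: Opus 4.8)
The plan is to deduce both statements from the ultrafilter characterizations already at hand: Theorem \ref{s3:t2} for $\mathbb{N}$-compactness and Theorem \ref{s5:t3} for realcompactness. For (ii) I would fix an arbitrary clopen ultrafilter $\mathcal{U}$ in $\mathbf{X}$ with the countable intersection property and show it is fixed; for (i) I would fix an arbitrary $z$-ultrafilter $\mathcal{U}$ in $\mathbf{X}$ with the weak countable intersection property and show it is fixed. In each case I would split the argument by the same dichotomy: either (Case A) every member of $\mathcal{U}$ meets $K$, or (Case B) some member of $\mathcal{U}$ is disjoint from $K$. Case A is handled uniformly and in pure $\mathbf{ZF}$ by the compactness of $K$: since clopen sets and zero-sets are closed, the traces $\{F\cap K:F\in\mathcal{U}\}$ form a family of closed subsets of $K$ with the finite intersection property (a finite intersection again lies in $\mathcal{U}$ and meets $K$), so compactness yields a point of $\bigcap\{F\cap K:F\in\mathcal{U}\}\subseteq\bigcap\mathcal{U}$.

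Case B is where the parts diverge. For (ii) I would fix a clopen $U_0\in\mathcal{U}$ with $U_0\cap K=\emptyset$; then $U_0\subseteq S$ and, being clopen, $U_0$ is a closed subspace of the $\mathbb{N}$-compact space $\mathbf{S}$, hence itself $\mathbb{N}$-compact. Because $U_0$ is clopen in $\mathbf{X}$, the clopen sets of $\mathbf{U_0}$ are precisely the clopen sets of $\mathbf{X}$ contained in $U_0$, and a routine check (cf. Proposition \ref{s6:p9}) shows that $\mathcal{W}=\{U\cap U_0:U\in\mathcal{U}\}$ is a clopen ultrafilter in $\mathbf{U_0}$; since $\mathcal{W}\subseteq\mathcal{U}$ it inherits the countable intersection property. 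Theorem \ref{s3:t2} applied to $\mathbf{U_0}$ then makes $\mathcal{W}$ fixed, and any point of $\bigcap\mathcal{W}$ lies in $\bigcap\mathcal{U}$.

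For (i) I would fix a zero-set $Z_0=Z(f_0)\in\mathcal{U}$ with $Z_0\cap K=\emptyset$. The difficulty is that, unlike a clopen set, $Z_0$ need not be $z$-embedded, so its trace ultrafilter is not directly available. My remedy is to thicken $Z_0$ to a cozero-set: separating the compact set $K$ from the disjoint zero-set $Z_0$ by some $h\in C(\mathbf{X},[0,1])$ with $h\upharpoonright K<\tfrac12$ and $h\upharpoonright Z_0=1$ (a finite-subcover argument, valid in $\mathbf{ZF}$) and setting $u=(h-\tfrac12)^{+}$, I obtain the cozero-set $C=\{u>0\}$ with $Z_0\subseteq C\subseteq S$. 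Then $\mathbf{C}$ is realcompact, being a cozero-subset of the realcompact space $\mathbf{S}$ (its graph $\{(x,s)\in S\times\mathbb{R}:u(x)s=1\}$ is closed in the realcompact space $\mathbf{S}\times\mathbb{R}$ and homeomorphic to $\mathbf{C}$), and $C$ is $z$-embedded in $\mathbf{X}$ \emph{canonically}: for $g\in C(\mathbf{C},[0,1])$ the function $v$ equal to $u\cdot g$ on $C$ and to $0$ off $C$ is continuous on $\mathbf{X}$ with $Z(v)=(X\setminus C)\cup Z(g)$. Using this canonical extension one verifies, just as for $c_{\delta}$-filters in Proposition \ref{s6:p9}, that $\mathcal{U}_C=\{Z\cap C:Z\in\mathcal{U}\}$ is a $z$-ultrafilter in $\mathbf{C}$.

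The hardest and most delicate point will be transferring the \emph{weak} countable intersection property to $\mathcal{U}_C$ while keeping every construction choice-free. Given a functionally accessible countable family $\{A_n:n\in\mathbb{N}\}\subseteq\mathcal{U}_C$ witnessed by $g_n\in C(\mathbf{C},[0,1])$ with $Z(g_n)=A_n$, the canonical extensions $v_n$ (equal to $u\cdot g_n$ on $C$ and $0$ off $C$) show that $\{Z_0\}\cup\{Z(v_n):n\in\mathbb{N}\}$ is a functionally accessible countable subfamily of $\mathcal{U}$, each $Z(v_n)$ belonging to $\mathcal{U}$ by maximality. The weak countable intersection property of $\mathcal{U}$ then gives $Z_0\cap\bigcap_nZ(v_n)\neq\emptyset$; since $Z_0\subseteq C$, this intersection equals $Z_0\cap\bigcap_nA_n$, forcing $\bigcap_nA_n\neq\emptyset$. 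Including $Z_0$ is essential here, for otherwise the sets $Z(v_n)$ all contain $X\setminus C\supseteq K$ and the conclusion would be vacuous. With $\mathcal{U}_C$ now a $z$-ultrafilter with the weak countable intersection property on the realcompact space $\mathbf{C}$, Theorem \ref{s5:t3} makes it fixed, and any point of $\bigcap\mathcal{U}_C$ lies in $\bigcap\mathcal{U}$. I expect precisely this bookkeeping with functional accessibility, arranged so that no form of choice is invoked, to be the main obstacle.
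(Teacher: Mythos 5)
Your proof is correct, and at the top level it follows the same strategy as the paper: both parts are reduced to the ultrafilter characterizations (Theorem \ref{s3:t2} for $\mathbb{N}$-compactness, Theorem \ref{s5:t3} with the weak countable intersection property for realcompactness), and in both arguments the compactness of $K$ is used to produce a member of the ultrafilter disjoint from $K$. The execution differs, though, chiefly in part (i). The paper argues contrapositively: starting from a free ultrafilter $\mathcal{F}$ with the (weak) countable intersection property it fixes $Z_0\in\mathcal{F}$ with $Z_0\cap K=\emptyset$ (this single step replaces your Case A/Case B dichotomy, since freeness plus compactness of $K$ forces Case B), and then traces $\mathcal{F}$ onto \emph{all} of $S$, i.e.\ works with $\mathcal{F}_S=\{S\cap F:F\in\mathcal{F}\}$; the technical device making $\mathcal{F}_S$ an ultrafilter and transferring the weak countable intersection property is a zero-set $Z_1$ with $K\subseteq\inter_{\mathbf{X}}(Z_1)$ and $Z_0\cap Z_1=\emptyset$ together with the Gillman--Jerison-style extension $h_n=f_n/(f_n+g)$ off $\inter_{\mathbf{X}}(Z_1)$ and $h_n=1$ on $Z_1$, where $Z(g)=S\cap Z_1$. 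You instead trace onto a smaller subspace: the clopen set $U_0$ itself in (ii), and in (i) a cozero-set $C$ with $Z_0\subseteq C\subseteq X\setminus K$. This buys you a completely canonical $z$-embedding via the cheaper multiplicative extension $v=u\cdot g$ (set to $0$ off $C$), at the modest price of having to know that a cozero subset of a realcompact space is realcompact (which follows in $\mathbf{ZF}$ from Proposition \ref{s2:p15}, or from your graph argument). Your handling of the delicate point --- keeping the witness families canonical so that no choice is used, and inserting $Z_0$ into the functionally accessible family $\{Z_0\}\cup\{Z(v_n):n\in\mathbb{N}\}$ so that the intersection statement is not rendered vacuous by the sets $Z(v_n)\supseteq X\setminus C$ --- is exactly where care is needed, and it is done correctly; the paper's $h_n$ achieve the same absorption of $Z_0$ by arranging $Z(h_n)\supseteq Z(f_n)\cap Z_0\in\mathcal{F}$. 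One small polish: your separation of $K$ from $Z_0$ is most cleanly obtained from a single witness $f_0$ with $Z(f_0)=Z_0$, since $f_0>0$ on $K$ gives, by a finite subcover of $\{\{f_0>1/n\}:n\in\mathbb{N}\}$, a uniform lower bound $\varepsilon$ on $K$, whence $h=\max(0,1-f_0/\varepsilon)$ works without any choice.
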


\begin{proof} Let $\mathbf{X}$ be a Tychonoff space (respectively, a zero-dimensional space). Suppose that $\mathcal{F}$ is a free ultrafilter in $\mathcal{Z}(\mathbf{X})$ (respectively, in $\mathcal{CO}(\mathbf{X})$). Put
	$$\mathcal{F}_S= \{S\cap F: F\in\mathcal{F}\}.$$
	\noindent Since $\bigcap\mathcal{F}=\emptyset$ and $K$ is compact in $\mathbf{X}$, we can fix $Z_0\in\mathcal{F}$ such that $K\cap Z_0=\emptyset$. Now, one can easily check that $\mathcal{F}_S$ is a filter in $\mathcal{Z}(\mathbf{S})$ (respectively, in $\mathcal{CO}(\mathbf{S})$). Let us assume that $W\in\mathcal{Z}(\mathbf{S})$.
	
	(i) Assuming that $\mathbf{X}$ is a Tychonoff space, by the compactness of $K$ in $\mathbf{X}$, we can choose $Z_1\in\mathcal{Z}(\mathbf{X})$ such that $K\subseteq \inter_{\mathbf{X}}(Z_1)$ and $Z_0\cap Z_1=\emptyset$. In much the same way, as in \cite[proof of Fact 21]{nik}, one can show that $W\cap Z_0\in\mathcal{Z}(\mathbf{X})$. Suppose that, for every $T\in\mathcal{F}_S$, $W\cap T\neq\emptyset$. Then, for every $F\in\mathcal{F}$, $W\cap Z_0\cap F\neq\emptyset$, so $W\cap Z_0\in\mathcal{F}$. Hence $W\in\mathcal{F}_S$. This proves that $\mathcal{F}_S$ is an ultrafilter in $\mathcal{Z}(\mathbf{S})$. 
	
	Now, assume that $\mathcal{F}$ has the weak countable intersection property. Let us show that $\mathcal{F}_S$ has the weak countable intersection property. To this aim, we assume that  $\{f_n: n\in\omega\}$ is a family of functions from $C(\mathbf{S}, [0, 1])$ such that, for every $n\in\omega$, $Z(f_n)\in\mathcal{F}_S$. We can fix $g\in C(\mathbf{S}, [0, 1])$ such that $S\cap Z_1=Z(g)$. Using the idea from \cite[proof of Fact 21]{nik}, for every $n\in\omega$, we define a function $h_n\in C(\mathbf{X}, [0, 1])$ as follows:
	\[
	h_n(x)=\begin{cases} \frac{f_n(x)}{f_n(x)+g(x)} &\quad\text{if}\quad x\in X\setminus \inter_{\mathbf{X}}Z_1;\\
		1 &\quad\text{if}\quad x\in Z_1.\end{cases}
	\]
	We notice that, for every $n\in\omega$, $Z(f_n)\cap Z_0\in\mathcal{F}$ and $Z(f_n)\cap Z_0=Z(h_n)$. Since $\mathcal{F}$ has the weak countable intersection property, $\bigcap_{n\in\omega}Z(h_n)\neq\emptyset$. In consequence, $\mathcal{F}_S$ has the weak countable intersection property. Since $\mathcal{F}_S$ is free, $\mathbf{S}$ is not realcompact by Theorem \ref{s5:t3}. This proves (i).
	
	(ii) Now, we assume that $\mathbf{X}$ is zero-dimensional, $\mathcal{F}$ is a free ultrafilter in $\mathcal{CO}(\mathbf{X})$, and $Z_0\in\mathcal{F}$ is such that $Z_0\cap K=\emptyset$.  We notice that if $W\in\mathcal{CO}(\mathbf{S})$, then,  $W\cap Z_0\in \mathcal{CO}(\mathbf{X})$, so, arguing in much the same way, as in the proof of (i), we can show that $\mathcal{F}_S$ is an ultrafilter in $\mathcal{CO}(\mathbf{S})$. Assuming that $\mathcal{F}$ has the countable intersection property, to check that $\mathcal{F}_S$ has the countable intersection property, it suffices to notice that if $\{E_n: n\in\omega\}$ is a collection of members of $\mathcal{F}_S$, then $\{E_n\cap Z_0: n\in\omega\}$ is a collection of members of $\mathcal{F}$. Hence, by Theorem \ref{s3:t2}, if $\mathbf{S}$ is $\mathbb{N}$-compact, so is $\mathbf{X}$.  
\end{proof}

\begin{definition}
	\label{s6:d13}
	A topological space $\mathbf{X}$ is called \emph{hereditarily realcompact} (respectively, \emph{hereditarily} $\mathbb{N}$-\emph{compact}) if every subspace of $\mathbf{X}$ is realcompact (respectively, $\mathbb{N}$-compact).
\end{definition}

The first characterizations of hereditary realcompactness in $\mathbf{ZFC}$ were given by Shirota in \cite[Theorem 6]{sh}, and slightly developed in \cite[Theorem 8.17]{gj} (see also  \cite[Exercise 3.11 B]{en}). Let us strengthen and extend Theorem 8.17 of \cite{gj} (cf. \cite[Exercise 3.11 B]{en}) as follows. 

\begin{theorem}
\label{s6:t14}
$[\mathbf{ZF}]$
For every completely regular (respectively, zero-dimen\-sional) $T_1$-space $\mathbf{X}$, the following conditions are equivalent:
\begin{enumerate}
\item[(i)] $\mathbf{X}$ is hereditarily realcompact (respectively, hereditarily $\mathbb{N}$-compact);
\item[(ii)] for every $x\in X$, the subspace $X\setminus\{x\}$ of $\mathbf{X}$ is realcompact (respectively, $\mathbb{N}$-compact);
\item[(iii)] every completely regular (respectively, zero-dimensional) $T_1$-space that can be mapped onto $\mathbf{X}$ by a continuous mapping with compact fibres is realcompact (respectively, $\mathbb{N}$-compact);
\item[(iv)] every completely regular (respectively, zero-dimensional) $T_1$-space that can be mapped onto $\mathbf{X}$ by a continuous injective mapping is realcompact (respectively, $\mathbb{N}$-compact).
\end{enumerate}
\end{theorem}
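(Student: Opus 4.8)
The plan is to run the cycle of implications $(i)\Rightarrow(iii)\Rightarrow(iv)\Rightarrow(ii)\Rightarrow(i)$, handling the two cases uniformly by writing $\mathbf{E}=\mathbb{R}$ in the completely regular case and $\mathbf{E}=\mathbb{N}$ in the zero-dimensional case; then $\mathbf{E}$-complete regularity is exactly complete regularity (resp.\ zero-dimensionality) together with $T_1$, while $\mathbf{E}$-compactness is realcompactness (resp.\ $\mathbb{N}$-compactness). Two links are immediate. For $(iii)\Rightarrow(iv)$ one observes that a continuous bijection onto $\mathbf{X}$ has one-point, hence compact, fibres, so every space occurring in the hypothesis of $(iv)$ also occurs in the hypothesis of $(iii)$; thus $(iii)$ forces all of them to be $\mathbf{E}$-compact. (The link $(i)\Rightarrow(ii)$ is trivial, as $X\setminus\{x\}$ is a subspace of $\mathbf{X}$, but it is not needed for the cycle.)

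For $(iv)\Rightarrow(ii)$, fix $x_0\in X$ and let $\mathbf{Z}$ be the set $X$ retopologised by adjoining $\{x_0\}$ as an open set; then $\mathbf{Z}=(X\setminus\{x_0\})\oplus\{x_0\}$ is the topological sum of the subspace $X\setminus\{x_0\}$ of $\mathbf{X}$ with a one-point space, so $\mathbf{Z}$ is $\mathbf{E}$-completely regular, and $\id_X\colon\mathbf{Z}\to\mathbf{X}$ is a continuous bijection. By $(iv)$, $\mathbf{Z}$ is $\mathbf{E}$-compact, and since $X\setminus\{x_0\}$ is clopen, hence closed, in $\mathbf{Z}$, it inherits $\mathbf{E}$-compactness as a closed subspace of an $\mathbf{E}$-compact space. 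For $(ii)\Rightarrow(i)$, I first note that $\mathbf{X}$ itself is $\mathbf{E}$-compact: writing $X=(X\setminus\{x_0\})\cup\{x_0\}$ with $\{x_0\}$ compact and $X\setminus\{x_0\}$ $\mathbf{E}$-compact, Theorem~\ref{s6:t12} applies. For an arbitrary subspace $Y$ with $\emptyset\neq X\setminus Y$, I use the identity $Y=\bigcap_{x\in X\setminus Y}(X\setminus\{x\})$, which exhibits $Y$ as an intersection of the $\mathbf{E}$-compact subspaces $X\setminus\{x\}$ of $\mathbf{X}$; Proposition~\ref{s2:p14} then gives that $Y$ is $\mathbf{E}$-compact (the cases $Y=X$ and $Y=\emptyset$ being covered by the previous sentence and by triviality).

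The substantial step is $(i)\Rightarrow(iii)$. Let $f\colon\mathbf{Z}\to\mathbf{X}$ be a continuous surjection with compact fibres, $\mathbf{Z}$ being $\mathbf{E}$-completely regular. Since $(i)$ gives that $\mathbf{X}$ is $\mathbf{E}$-compact, and since $v_{\mathbf{E}}\mathbf{Z}$ is $\mathbf{E}$-compact, the universal property in Theorem~\ref{s2:t9} (applied to the $\mathbf{E}$-compact target $\mathbf{X}$) yields a unique continuous $\tilde f\colon v_{\mathbf{E}}\mathbf{Z}\to\mathbf{X}$ extending $f$. For each $x\in X$ I set $M_x=\tilde f^{-1}[X\setminus\{x\}]$ and $N_x=M_x\cup f^{-1}(x)$, viewed as subspaces of $v_{\mathbf{E}}\mathbf{Z}$. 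As $v_{\mathbf{E}}\mathbf{Z}$ is $\mathbf{E}$-compact and $X\setminus\{x\}$ is $\mathbf{E}$-compact by $(i)$, Proposition~\ref{s2:p15} shows $M_x$ is $\mathbf{E}$-compact; then $N_x=M_x\cup f^{-1}(x)$ with $f^{-1}(x)$ compact, so Theorem~\ref{s6:t12} shows $N_x$ is $\mathbf{E}$-compact. The key computation is that, because $f^{-1}(x)=\tilde f^{-1}(x)\cap Z$, one has $v_{\mathbf{E}}Z\setminus N_x=\tilde f^{-1}(x)\setminus Z$; taking the union over $x\in X$ and using $\bigcup_{x\in X}\tilde f^{-1}(x)=v_{\mathbf{E}}Z$ gives $\bigcup_{x\in X}(v_{\mathbf{E}}Z\setminus N_x)=v_{\mathbf{E}}Z\setminus Z$, whence $\bigcap_{x\in X}N_x=Z$. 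Finally Proposition~\ref{s2:p14}, applied to the family $\{N_x:x\in X\}$ of $\mathbf{E}$-compact subspaces of $v_{\mathbf{E}}\mathbf{Z}$, gives that $Z=\bigcap_{x\in X}N_x$ is $\mathbf{E}$-compact.

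The main obstacle is precisely the construction in $(i)\Rightarrow(iii)$: one must manufacture, inside the Hewitt $\mathbf{E}$-compact extension $v_{\mathbf{E}}\mathbf{Z}$, an explicit family of $\mathbf{E}$-compact subspaces whose intersection is exactly $Z$, and it is the fusion $N_x=M_x\cup f^{-1}(x)$ of the pulled-back $\mathbf{E}$-compact set $M_x$ with the compact fibre $f^{-1}(x)$ (legitimised by Theorem~\ref{s6:t12}) that makes the intersection collapse to $Z$ rather than to a larger set. I expect no appeal to any choice principle: the extension $\tilde f$ is canonical by Theorem~\ref{s2:t9}, the assignment $x\mapsto N_x$ is definable, and Propositions~\ref{s2:p14} and~\ref{s2:p15} together with Theorem~\ref{s6:t12} are all available in $\mathbf{ZF}$, so the whole argument stays within $\mathbf{ZF}$. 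The only remaining points require the routine verifications that the space $\mathbf{Z}$ in $(iv)\Rightarrow(ii)$ and each $N_x$ are of the correct separation and dimension type, which follow from the heredity of complete regularity (resp.\ zero-dimensionality) and of the $T_1$ axiom, and from the fact that these properties pass to topological sums.
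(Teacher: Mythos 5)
Your proof is correct and follows essentially the route the paper intends, namely the Gillman--Jerison Theorem 8.17 scheme carried out with Propositions \ref{s2:p14}, \ref{s2:p15} and Theorem \ref{s6:t12}, together with the universal property of $v_{\mathbf{E}}\mathbf{Z}$ from Theorem \ref{s2:t9}. Your only (harmless, arguably cleaner) deviation is to finish $(i)\Rightarrow(iii)$ by verifying $\bigcap_{x\in X}N_x=Z$ and invoking Proposition \ref{s2:p14}, rather than the classical step that a realcompact space squeezed between $Z$ and $v_{\mathbf{E}}Z$ must equal $v_{\mathbf{E}}Z$ -- a fact the paper never isolates.
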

\begin{proof}
With Propositions \ref{s2:p14}, \ref{s2:p15} and Theorem \ref{s6:t12} in hand, it suffices to mimic the proof of Theorem 8.17 in \cite{gj} and slightly modify it for $\mathbb{N}$-compactness. Let us leave details as a simple exercise.
\end{proof}

\begin{corollary}
	\label{s6:c15}
	$[\mathbf{ZF}]$ Let $\mathbf{X}=\langle X, \tau_X\rangle$ be a $T_1$-space. Then the following hold:
	
	\begin{enumerate}
		\item[(i)] if $\mathbf{X}$ is completely regular, then $\mathbf{X}$ is hereditarily realcompact if and only if, for every completely regular topology $\tau$ on $X$ such that $\tau_X\subseteq \tau$, the space $\langle X, \tau\rangle$  is realcompact;
		\item[(ii)] if $\mathbf{X}$ is zero-dimensional, then  $\mathbf{X}$ is hereditarily $\mathbb{N}$-compact if and only if, for every zero-dimensional topology $\tau$ on $X$ such that $\tau_X\subseteq \tau$, the space $\langle X, \tau\rangle$  is $\mathbb{N}$-compact.
	\end{enumerate}
\end{corollary}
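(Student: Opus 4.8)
The plan is to derive both equivalences from Theorem \ref{s6:t14}, exploiting its characterizations of hereditary realcompactness (respectively, hereditary $\mathbb{N}$-compactness) in terms of continuous injective images (condition (iv)) and of point-deleted subspaces (condition (ii)). Throughout, let $\mathcal{P}$ denote the relevant separation property, namely ``completely regular'' in part (i) and ``zero-dimensional'' in part (ii). I shall use repeatedly that a topology refining a $T_1$-topology is again $T_1$, and that a closed subspace of a realcompact (respectively, $\mathbb{N}$-compact) space is realcompact (respectively, $\mathbb{N}$-compact), the latter being immediate in $\mathbf{ZF}$ from Definition \ref{s1:d2}, since a closed subspace of a closed subspace of $\mathbb{R}^J$ (respectively, $\mathbb{N}^J$) is again closed in $\mathbb{R}^J$ (respectively, $\mathbb{N}^J$).

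For the forward implication I would argue as follows. Assume $\mathbf{X}$ is hereditarily realcompact (respectively, hereditarily $\mathbb{N}$-compact) and let $\tau$ be a $\mathcal{P}$-topology on $X$ with $\tau_X \subseteq \tau$. Since $\tau$ refines $\tau_X$, the identity $\id_X\colon \langle X, \tau\rangle \to \mathbf{X}$ is a continuous bijection, hence a continuous injective mapping of the $\mathcal{P}$-$T_1$-space $\langle X, \tau\rangle$ onto $\mathbf{X}$. The implication $(i)\to(iv)$ of Theorem \ref{s6:t14} then gives at once that $\langle X, \tau\rangle$ is realcompact (respectively, $\mathbb{N}$-compact). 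This is the easy half.

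For the converse I would verify condition (ii) of Theorem \ref{s6:t14}. Fix $x\in X$ and let $\tau_x$ be the topology on $X$ generated by $\tau_X \cup \{\{x\}\}$; its open sets are exactly the sets $V$ and $V\cup\{x\}$ with $V\in\tau_X$. Plainly $\tau_X\subseteq\tau_x$, so $\tau_x$ is $T_1$; moreover, since $\mathbf{X}$ is $T_1$, the set $X\setminus\{x\}$ is $\tau_X$-open, whence $\{x\}$ is $\tau_x$-clopen. The key observation is that $\langle X,\tau_x\rangle$ inherits property $\mathcal{P}$: every $\tau_X$-clopen (respectively, cozero) set stays clopen (respectively, cozero) in the finer topology because the witnessing characteristic function (respectively, real-valued function) remains continuous, while $\{x\}$, being $\tau_x$-clopen, is a cozero-set of $\langle X,\tau_x\rangle$; running through the two types of basic $\tau_x$-open sets then shows that $\mathcal{CO}(\langle X,\tau_x\rangle)$ (respectively, $\mathcal{Z}^c(\langle X,\tau_x\rangle)$) is a base. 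Thus $\langle X,\tau_x\rangle$ is a $\mathcal{P}$-$T_1$-space refining $\tau_X$, and by hypothesis it is realcompact (respectively, $\mathbb{N}$-compact).

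Finally, since $\{x\}$ is $\tau_x$-open, the set $X\setminus\{x\}$ is closed in $\langle X,\tau_x\rangle$ and hence realcompact (respectively, $\mathbb{N}$-compact); and because adjoining $\{x\}$ as an open set does not change the trace topology off $x$, the $\tau_x$-subspace topology on $X\setminus\{x\}$ coincides with the $\tau_X$-subspace topology. Therefore the subspace $X\setminus\{x\}$ of $\mathbf{X}$ is realcompact (respectively, $\mathbb{N}$-compact), and as $x\in X$ was arbitrary, condition (ii) of Theorem \ref{s6:t14} holds; the implication $(ii)\to(i)$ then yields that $\mathbf{X}$ is hereditarily realcompact (respectively, hereditarily $\mathbb{N}$-compact). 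I expect the only step requiring genuine care to be the middle one, namely that isolating a single point preserves complete regularity (respectively, zero-dimensionality) while leaving the remaining subspace topology untouched; once this is settled, the corollary falls straight out of Theorem \ref{s6:t14}, and no choice principle is needed.
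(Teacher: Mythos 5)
Your proof is correct and follows the route the paper intends, namely deducing the corollary from Theorem \ref{s6:t14}: the forward implication via condition (iv) applied to the continuous bijection $\id_X\colon\langle X,\tau\rangle\to\mathbf{X}$, and the converse via condition (ii) after checking that isolating a single point yields a finer completely regular (respectively, zero-dimensional) $T_1$-topology whose trace on $X\setminus\{x\}$ is unchanged. All steps, including the $\mathbf{ZF}$-validity of passing to closed subspaces of realcompact ($\mathbb{N}$-compact) spaces, are sound.
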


\begin{remark}
	\label{s6:r16} 
	In \cite[Remark (iii), p.  234]{lr}, there is the following statement: ``a topology stronger than a realcompact topology is realcompact''. However, it follows from our Corollary \ref{s6:c15} that,  in $\mathbf{ZF}$, a Tychonoff topology stronger than a realcompact topology need not be realcompact, and a zero-dimensional topology stronger than an $\mathbb{N}$-compact topology need not be $\mathbb{N}$-compact.
\end{remark}

To show a reasonable sufficient condition for a stronger topology than a given realcompact (respectively, $\mathbb{N}$-compact) topology to be realcompact (respectively, $\mathbb{N}$-compact), let us give the following definitions and prove two lemmas.

\begin{definition}
	\label{s6:d17}
	Let $\mathbf{X}$ and $\mathbf{Y}$ be topological spaces. A continuous surjection $f: \mathbf{X}\to\mathbf{Y}$ is called $z$-\emph{perfect} (respectively, $c_{\delta}$-\emph{perfect}) if, for every $p\in Y$, the set $f^{-1}[\{p\}]$ is compact in $\mathbf{X}$  and, for every closed set $C$ in $\mathbf{X}$ and every $y\in Y\setminus f[C]$, there exists $Z\in \mathcal{Z}(\mathbf{Y})$ (respectively, $Z\in\mathcal{C}_{\delta}(\mathbf{Y})$) such that $y\in Z\subseteq Y\setminus f[C]$.
\end{definition}

\begin{definition}
\label{s6:d18}
Let $\mathbf{X}=\langle X, \tau\rangle$ be a topological space. 
	\begin{enumerate}
		\item[(i)] (Cf., e.g., \cite{lr} and \cite{lorch}.) The \emph{Baire topology} of $\mathbf{X}$ is the topology $\tau_z$ on $X$ such that $\mathcal{Z}(\mathbf{X})$ is a base of $\langle X, \tau_z\rangle$. 
		\item[(ii)] The $c_{\delta}$-\emph{topology} of $\mathbf{X}$ is the topology $\tau_{c_\delta}$ on $X$ such  that $\mathcal{CO}_{\delta}(\mathbf{X})$ is a base of $\langle X, \tau_{c_\delta}\rangle$. 
	\end{enumerate}
\end{definition}

\begin{lemma}
	\label{s6:l19}
	$[\mathbf{ZF}]$ Let $C$ be a subset of a topological space $\mathbf{X}$. Then $C\in \mathcal{CO}_{\delta}(\mathbf{X})$ if and only if there exists $f\in C(\mathbf{X},\mathbb{N}(\infty))$ such that $C=f^{-1}[\{\infty\}]$.
\end{lemma}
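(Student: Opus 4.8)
The plan is to prove the biconditional by establishing each implication directly, using the fact that $\mathbb{N}(\infty)$ is zero-dimensional with a well-understood clopen structure: its clopen sets are precisely those finite subsets of $\mathbb{N}$ together with their complements. First I would prove the easy direction: if $f\in C(\mathbf{X},\mathbb{N}(\infty))$ and $C=f^{-1}[\{\infty\}]$, I would write $\{\infty\}=\bigcap_{n\in\mathbb{N}}V_n$ where $V_n=\mathbb{N}(\infty)\setminus\{1,2,\dots,n\}$ is clopen in $\mathbb{N}(\infty)$ (being the complement of a finite subset of $\mathbb{N}$). Then $C=f^{-1}[\{\infty\}]=\bigcap_{n\in\mathbb{N}}f^{-1}[V_n]$, and each $f^{-1}[V_n]\in\mathcal{CO}(\mathbf{X})$ by continuity, so $C\in\mathcal{CO}_{\delta}(\mathbf{X})$ by definition. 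This direction needs no choice and is a short computation.

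For the converse, suppose $C\in\mathcal{CO}_{\delta}(\mathbf{X})$, so $C=\bigcap_{n\in\mathbb{N}}C_n$ for some sequence of clopen sets $C_n$. I would first replace this by a decreasing sequence by setting $D_n=\bigcap_{i=1}^{n}C_i$, which is a finite intersection of clopen sets, hence clopen, with $D_{n+1}\subseteq D_n$ and $\bigcap_{n}D_n=C$; note that forming finite intersections is choice-free and preserves the given indexing. The key idea is then to define $f:X\to\mathbb{N}(\infty)$ explicitly from the given sequence, mirroring the construction $f_0$ used in the proof of Theorem \ref{s3:t2}: I would set $f(x)=\infty$ when $x\in C$ (i.e.\ $x\in D_n$ for all $n$), and otherwise $f(x)=n$ where $n$ is the least index with $x\notin D_n$. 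This gives $f^{-1}[\{\infty\}]=\bigcap_{n}D_n=C$.

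The main obstacle, and the step deserving the most care, is verifying that this $f$ is continuous into $\mathbb{N}(\infty)$. Continuity is checked against the basic open sets of $\mathbb{N}(\infty)$: for a singleton $\{k\}$ with $k\in\mathbb{N}$, I would show $f^{-1}[\{k\}]=D_{k-1}\setminus D_k$ (with $D_0:=X$), which is clopen, hence open; and for a basic neighborhood $V_n=\mathbb{N}(\infty)\setminus\{1,\dots,n\}$ of $\infty$, I would show $f^{-1}[V_n]=D_n$, which is open. Since these sets form a base for $\mathbb{N}(\infty)$ and each preimage is open, $f$ is continuous. Throughout, the construction uses only the explicitly given sequence $(C_n)_n$ and its finite intersections, so no choice principle is invoked and the argument is valid in $\mathbf{ZF}$. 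I would conclude by noting that $f^{-1}[\{\infty\}]=\bigcap_{n}D_n=C$, completing the converse and hence the lemma.
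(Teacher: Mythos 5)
Your proof is correct and follows essentially the same route as the paper: pass to a decreasing sequence of clopen sets, send $x$ to the first index at which it drops out (and to $\infty$ on $C$), and verify continuity against the standard base of $\mathbb{N}(\infty)$. The only cosmetic difference is that by not insisting the sequence be strictly decreasing you handle the case where $C$ is itself clopen uniformly, whereas the paper treats that case separately.
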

\begin{proof}
	Clearly, if $f\in C(\mathbf{X}, \mathbb{N}(\infty))$ and $C=f^{-1}[\{\infty\}]$, then $C$ is a $c_{\delta}$-set of $\mathbf{X}$. On the other hand, if $C$ is clopen in $\mathbf{X}$, we define a function $g\in C(\mathbf{X}, \mathbb{N}(\infty))$ as follows:
	$$
	g(x)=\begin{cases} 1 &\quad\text{if}\quad x\in X\setminus C;\\
		\infty &\quad\text{if}\quad x\in C.\end{cases}
	$$
	\noindent Then $C=g^{-1}[\{\infty\}]$. 
	
	Let us assume that $C$ is not clopen in $\mathbf{X}$ but $C\in\mathcal{CO}_{\delta}(\mathbf{X})$. Then we can fix a family $\{C_n: n\in\omega\}$ of clopen sets of $\mathbf{X}$ such that  $C=\bigcap_{n\in\omega}C_n$ where $C_0=X$ and, for every $n\in\omega$, $C_{n+1}\subsetneq C_n$. Let us define a function $h: X\to\mathbb{N}(\infty)$ as follows:
	\[
	h(x)=\begin{cases} n+1 &\quad\text{if}\quad x\in C_n\setminus C_{n+1};\\
		\infty &\quad\text{if}\quad x\in C.\end{cases}
	\]
	\noindent Then $h\in C(\mathbf{X}, \mathbb{N}(\infty))$ and $C=h^{-1}[\{\infty\}]$. 
\end{proof}

\begin{lemma}
	\label{s6:l20}
	$[\mathbf{ZF}]$ Suppose that $\mathbf{X}$ and $\mathbf{Y}$ are completely regular (respectively, zero-dimension\-al) $T_1$-spaces, and $f:\mathbf{X}\to\mathbf{Y}$ is a $z$-perfect (respectively $c_{\delta}$-perfect) mapping. Let $\tilde{f}: v\mathbf{X}\to v\mathbf{Y}$ (respectively, $\tilde{f}:v_0\mathbf{X}\to v_{0}\mathbf{Y}$) be the continuous extension of $f$. Then $\tilde{f}[vX\setminus X]\subseteq vY\setminus Y$ (respectively, $\tilde{f}[v_{0}X\setminus X]\subseteq v_{0}Y\setminus Y$).
\end{lemma}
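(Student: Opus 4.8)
The plan is to prove the contrapositive in each case: if $p\in v\mathbf{X}$ satisfies $\tilde{f}(p)=y\in Y$, then $p\in X$, so that the remainder $v X\setminus X$ must be sent into $vY\setminus Y$ (and symmetrically for $v_0$). Throughout I would work in the concrete model supplied by Theorem \ref{s2:t9}, namely $v\mathbf{X}=\cl_{\mathbb{R}^{C(\mathbf{X})}}(e_{C(\mathbf{X})}[X])$, identifying $X$ with $e_{C(\mathbf{X})}[X]$ and taking $\tilde{f}$ to be the canonical extension from Theorem \ref{s2:t9}. Assume, for a contradiction, that $p\notin X$. Since $f$ is $z$-perfect, the fibre $K=f^{-1}[\{y\}]$ is compact in $\mathbf{X}$, hence compact and therefore closed in the Hausdorff space $v\mathbf{X}$; as $K\subseteq X$ we get $p\notin K$.

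The first genuinely delicate step is to separate $p$ from $K$ inside $\mathbf{ZF}$, where the classical argument would invoke $\beta\mathbf{X}$ or an arbitrary family of separating functions. Here the product model rescues us: since $v\mathbf{X}\setminus K$ is open and contains $p$, there are \emph{finitely many} $\phi_1,\dots,\phi_n\in C(\mathbf{X})$ and open $U_1,\dots,U_n\subseteq\mathbb{R}$ with $p\in\bigcap_{i\le n}\pi_{\phi_i}^{-1}[U_i]\subseteq v\mathbf{X}\setminus K$. Choosing finitely many explicit bump functions $g_i\in C(\mathbb{R},[0,1])$ with $g_i(\pi_{\phi_i}(p))=1$ and $g_i\equiv 0$ off $U_i$, the function $h=\prod_{i\le n}(g_i\circ\pi_{\phi_i})$ lies in $C(v\mathbf{X},[0,1])$, satisfies $h(p)=1$ and $h\upharpoonright K=0$, and is built with no appeal to choice. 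Put $C=\{x\in X:h(x)\ge\frac12\}$; it is closed in $\mathbf{X}$ and disjoint from $K$, so $y\notin f[C]$.

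Now I would feed $C$ into the $z$-perfectness hypothesis to get $\phi\in C(\mathbf{Y})$ with $y\in Z(\phi)\subseteq Y\setminus f[C]$, set $\psi=\phi\circ f$ so that $Z(\psi)=f^{-1}[Z(\phi)]\subseteq\{x:h(x)<\frac12\}$, and extend $\phi$ to $\tilde{\phi}\in C(v\mathbf{Y})$ by the universal property of the Hewitt $\mathbb{R}$-compact extension (Definition \ref{s2:d4}, Theorem \ref{s2:t9}). Then $\tilde{\psi}=\tilde{\phi}\circ\tilde{f}$ extends $\psi$ and $\tilde{\psi}(p)=\phi(y)=0$. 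The key gadget is $u\in C(\mathbf{X})$ defined by $u(x)=\max(h(x)-\frac12,0)/|\psi(x)|$ where $\psi(x)\neq 0$ and $u(x)=0$ on $Z(\psi)$; continuity at a point of $Z(\psi)$ holds because there $h<\frac12$, so the numerator vanishes on a whole neighbourhood. Extending $u$ to $\tilde{u}\in C(v\mathbf{X})$ (again real-valued, by the defining property of $v\mathbf{X}$), the identity $u\cdot|\psi|=\max(h-\frac12,0)$ holds on the dense set $X$, hence on $v\mathbf{X}$ by Proposition \ref{s2:p7}; evaluating at $p$ yields $\tilde{u}(p)\cdot 0=\frac12$, the desired contradiction.

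The zero-dimensional case runs in parallel with $v_0\mathbf{X}=\cl_{\mathbb{N}^{C(\mathbf{X},\mathbb{N})}}(e[X])$: separate $p$ from $K$ by a clopen $D\ni p$ determined by finitely many coordinate maps into the discrete $\mathbb{N}$, take $C=D\cap X$, and use $c_{\delta}$-perfectness together with Lemma \ref{s6:l19} to get $g\in C(\mathbf{Y},\mathbb{N}(\infty))$ with $Z=g^{-1}[\{\infty\}]$ satisfying $y\in Z\subseteq Y\setminus f[C]$. Extending $g$ to $\tilde{g}\in C(v_0\mathbf{Y},\mathbb{N}(\infty))$ (legitimate since $\mathbb{N}(\infty)$ is $\mathbb{N}$-compact, Theorem \ref{s2:t9}), the map $\Psi=\tilde{g}\circ\tilde{f}$ has $\Psi(p)=\infty$ while $\Psi$ is $\mathbb{N}$-valued on $D\cap X$. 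Defining $u\in C(\mathbf{X},\mathbb{N})$ to equal $\Psi$ on $D\cap X$ and $1$ off $D$, and extending it to $\tilde{u}\in C(v_0\mathbf{X},\mathbb{N})$ (which stays $\mathbb{N}$-valued by Definition \ref{s2:d4} with $\mathbf{E}=\mathbb{N}$), density of $D\cap X$ in $D$ and Proposition \ref{s2:p7} force $\tilde{u}(p)=\Psi(p)=\infty\notin\mathbb{N}$, a contradiction. The main obstacle throughout is staying inside $\mathbf{ZF}$: both the separation of $p$ from the fibre and every function extension must avoid the Tychonoff product theorem and arbitrary choices, which is exactly what the explicit coordinate model and the universal property of the Hewitt extension provide; the only further subtlety is verifying the boundary continuity of the gadget $u$ in the realcompact case.
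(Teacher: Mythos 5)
Your proof is correct, and while its opening moves mirror the paper's, the way you reach the contradiction is genuinely different. Both arguments take a point $p$ of the remainder with $\tilde{f}(p)=y\in Y$, use compactness of the fibre $K=f^{-1}[\{y\}]$ to separate $p$ from $K$, feed the resulting closed set $C$ (disjoint from $K$, so $y\notin f[C]$) into the $z$-perfectness (resp.\ $c_{\delta}$-perfectness) hypothesis, and pull the resulting zero-set (resp.\ $c_\delta$-set) around $y$ back through $\tilde{f}$. From there the paper proceeds structurally: it intersects the pulled-back set with the set separating $p$ from $K$ to obtain a \emph{non-empty} member $Z_5$ of $\mathcal{Z}(v\mathbf{X})$ (resp.\ $\mathcal{CO}_{\delta}(v_0\mathbf{X})$) contained in the remainder, observes via Propositions \ref{s2:p14}--\ref{s2:p15} and Lemma \ref{s6:l19} that $v X\setminus Z_5$ is a realcompact (resp.\ $\mathbb{N}$-compact) space squeezed between $X$ and $v X$, and contradicts the uniqueness of the Hewitt extension. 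You instead finish with the Gillman--Jerison-style algebraic trick: the quotient $u=\max(h-\tfrac12,0)/|\psi|$ and the identity $u\cdot|\psi|=\max(h-\tfrac12,0)$, propagated to $v\mathbf{X}$ by Proposition \ref{s2:p7} and evaluated at $p$ (with the clopen gluing playing the analogous role in the $\mathbb{N}$-compact case). Your route is more elementary in that it bypasses Propositions \ref{s2:p14}--\ref{s2:p15} and the minimality argument, at the price of the boundary-continuity check for $u$; it also has the merit of making explicit, via finitely many coordinates of the product model and explicit bump functions, why the separation of $p$ from $K$ costs no choice --- a step the paper asserts without detail. Both proofs are valid in $\mathbf{ZF}$.
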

\begin{proof}
	
	Suppose that $p\in vX\setminus X$ (respectively, $p\in v_{0}X\setminus X$) is such that $\tilde{f}(p)\in Y$. There exists $x_p\in X$ such that $\tilde{f}(p)=f(x_p)$. The set $K=f^{-1}[f(x_p)]$ is compact in $\mathbf{X}$, so there exist sets $Z_1, Z_2\in\mathcal{Z}(v\mathbf{X})$ (respectively, $Z_1, Z_2\in \mathcal{CO}(v_{0}\mathbf{X})$) such that $p\in Z_1$, $K\subseteq Z_2$ and $Z_1\cap Z_2=\emptyset$. Since $f(x_p)\notin f[X\cap Z_1]$, there exists $Z_3\in\mathcal{Z}(\mathbf{Y})$ (respectively, $Z_3\in\mathcal{CO}_{\delta}(\mathbf{Y})$) such that $f(x_p)\in Z_3\subseteq Y\setminus f[X\cap Z_1]$. There exists $g\in C(v\mathbf{Y}, [0, 1])$ (respectively, $G\in \mathcal{CO}_{\delta}(v_{0}\mathbf{Y})$) such that $Z_3=Z(g)\cap Y$ (respectively, $Z_3=G\cap Y$). Let $Z_4= Z(g\circ \tilde{f})$ (respectively, $Z_4=\tilde{f}^{-1}[G]$).  Then $p\in Z_4\in\mathcal{Z}(v\mathbf{X})$ (respectively, $p\in Z_4\in\mathcal{CO}_{\delta}(v_{0}\mathbf{X})$) and $Z_4\cap Z_1\cap X=\emptyset$. Hence, for $Z_5=Z_4\cap Z_1$, we have $Z_5\in\mathcal{Z}(v\mathbf{X})$ (respectively, $Z_5\in\mathcal{CO}_{\delta}(v_{0}\mathbf{X})$) and $\emptyset\neq Z_5\subseteq vX\setminus X$ (respectively, $\emptyset\neq Z_5\subseteq v_{0}X\setminus X$). Let $S=vX\setminus Z_5$ (respectively, $S=v_{0}X\setminus Z_5$). It follows from Proposition \ref{s2:p15} that if $Z_5\in\mathcal{Z}(v\mathbf{X})$, then the subspace $\mathbf{S}$ of $v\mathbf{X}$ is realcompact. By Proposition \ref{s2:p15} and Lemma \ref{s6:l19}, if $Z_5\in\mathcal{CO}_{\delta}(v_0\mathbf{X})$, the subspace $\mathbf{S}$ of $v_{0}\mathbf{X}$ is $\mathbb{N}$-compact. Therefore, since $X\subseteq S\subseteq vX$ (respectively, $X\subseteq S\subseteq v_{0}X$), the equality $S=vX$ (respectively, $S=v_{0}X$) holds. But this is impossible because $Z_5\neq\emptyset$. The contradiction obtained completes the proof.
\end{proof}

\begin{theorem}
	\label{s6:t21}
	$[\mathbf{ZF}]$ Let $\mathbf{X}=\langle X, \tau\rangle$ be a realcompact $($respectively, $\mathbb{N}$-compact$)$ space. Let $\tau^{\ast}$ be a topology on $X$ such that $\langle X, \tau^{\ast}\rangle$ is completely regular $($respectively, zero-dimensional$)$ and $\tau\subseteq\tau^{\ast}\subseteq \tau_{z}$ $($respectively,  $\tau\subseteq\tau^{\ast}\subseteq \tau_{c_{\delta}}$$)$. Then $\langle X, \tau^{\ast}\rangle$ is realcompact $($respectively, $\mathbb{N}$-compact$)$.
\end{theorem}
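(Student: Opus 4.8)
The plan is to realize $\langle X,\tau^{\ast}\rangle$ as the domain of a perfect map onto $\mathbf{X}$ and then transport realcompactness (respectively $\mathbb{N}$-compactness) of $\mathbf{X}$ back across this map by means of Lemma \ref{s6:l20}. Write $\mathbf{X}^{\ast}=\langle X,\tau^{\ast}\rangle$. First I would observe that, since $\tau\subseteq\tau^{\ast}$, the identity map $\id\colon\mathbf{X}^{\ast}\to\mathbf{X}$ is a continuous bijection. As $\mathbf{X}$ is realcompact (respectively $\mathbb{N}$-compact) it is Tychonoff (respectively zero-dimensional $T_1$), and $\mathbf{X}^{\ast}$ is Tychonoff (respectively zero-dimensional $T_1$) by hypothesis, so Lemma \ref{s6:l20} will apply to $\id$ once it is shown to be $z$-perfect (respectively $c_{\delta}$-perfect).

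The central claim is precisely that $\id\colon\mathbf{X}^{\ast}\to\mathbf{X}$ is $z$-perfect (respectively $c_{\delta}$-perfect). Its fibres are singletons, hence compact. For the second clause of Definition \ref{s6:d17}, let $C$ be $\tau^{\ast}$-closed in $\mathbf{X}^{\ast}$ and let $y\in X\setminus\id[C]=X\setminus C$. Then $X\setminus C\in\tau^{\ast}\subseteq\tau_{z}$ (respectively $\tau^{\ast}\subseteq\tau_{c_{\delta}}$), so $X\setminus C$ is a union of members of the base $\mathcal{Z}(\mathbf{X})$ (respectively $\mathcal{CO}_{\delta}(\mathbf{X})$) of the Baire topology (respectively the $c_{\delta}$-topology) in the sense of Definition \ref{s6:d18}. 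Hence there is $Z\in\mathcal{Z}(\mathbf{X})$ (respectively $Z\in\mathcal{CO}_{\delta}(\mathbf{X})$) with $y\in Z\subseteq X\setminus C$, which is exactly the condition required of the codomain $\mathbf{X}$ in Definition \ref{s6:d17}.

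Next I would feed this into Lemma \ref{s6:l20}, applied with domain $\mathbf{X}^{\ast}$ and codomain $\mathbf{X}$, to obtain that the continuous extension $\tilde{\id}\colon v\mathbf{X}^{\ast}\to v\mathbf{X}$ (respectively $\tilde{\id}\colon v_{0}\mathbf{X}^{\ast}\to v_{0}\mathbf{X}$) satisfies $\tilde{\id}[vX^{\ast}\setminus X]\subseteq vX\setminus X$ (respectively $\tilde{\id}[v_{0}X^{\ast}\setminus X]\subseteq v_{0}X\setminus X$). Since $\mathbf{X}$ is realcompact (respectively $\mathbb{N}$-compact), Theorem \ref{s2:t8} gives that $e_{C(\mathbf{X},\mathbb{R})}[X]$ (respectively $e_{C(\mathbf{X},\mathbb{N})}[X]$) is already closed in the relevant power, so by Theorem \ref{s2:t9} and Remark \ref{s2:r10} we have $v\mathbf{X}=\mathbf{X}$ (respectively $v_{0}\mathbf{X}=\mathbf{X}$); equivalently $vX\setminus X=\emptyset$ (respectively $v_{0}X\setminus X=\emptyset$). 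The displayed inclusion then forces $vX^{\ast}\setminus X=\emptyset$ (respectively $v_{0}X^{\ast}\setminus X=\emptyset$), that is, $v\mathbf{X}^{\ast}=\mathbf{X}^{\ast}$ (respectively $v_{0}\mathbf{X}^{\ast}=\mathbf{X}^{\ast}$). As $v\mathbf{X}^{\ast}$ (respectively $v_{0}\mathbf{X}^{\ast}$) is realcompact (respectively $\mathbb{N}$-compact) by construction, we conclude that $\langle X,\tau^{\ast}\rangle$ is realcompact (respectively $\mathbb{N}$-compact).

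The only genuinely delicate point is the verification that $\id$ is $z$-perfect (respectively $c_{\delta}$-perfect): everything hinges on reading the inclusion $\tau^{\ast}\subseteq\tau_{z}$ (respectively $\tau^{\ast}\subseteq\tau_{c_{\delta}}$) through the base description of the Baire (respectively $c_{\delta}$-) topology, so that each $\tau^{\ast}$-open set becomes a union of zero-sets (respectively $c_{\delta}$-sets) of $\mathbf{X}$. One must also check that both $\mathbf{X}$ and $\mathbf{X}^{\ast}$ meet the separation and regularity hypotheses of Lemma \ref{s6:l20}; this is exactly where the assumption that $\langle X,\tau^{\ast}\rangle$ is completely regular (respectively zero-dimensional) is needed. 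No choice principle beyond $\mathbf{ZF}$ enters, since Lemma \ref{s6:l20} and Theorems \ref{s2:t8}, \ref{s2:t9} are all $\mathbf{ZF}$-results.
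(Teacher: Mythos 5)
Your proposal is correct and is essentially the paper's own proof: the paper likewise observes that $\id_X\colon\langle X,\tau^{\ast}\rangle\to\langle X,\tau\rangle$ is $z$-perfect (respectively $c_{\delta}$-perfect) and concludes via Lemma \ref{s6:l20}. You merely spell out the verification of $z$-perfectness from the base description of $\tau_z$ (respectively $\tau_{c_\delta}$) and the final step $v\mathbf{X}=\mathbf{X}$, which the paper leaves implicit.
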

\begin{proof}
	It suffices to observe that the identity map $\id_{X}:\langle X, \tau^{\ast}\rangle\to\langle X, \tau\rangle$ is $z$-perfect (respectively, $c_{\delta}$-perfect), so $\langle X, \tau^{\ast}\rangle$ is realcompact (respectively, $\mathbb{N}$-compact) by Lemma \ref{s6:l20}.
\end{proof}

Let make comments on Corollary 8.15 in \cite{gj} asserting that if $\mathbf{X}$ is a realcompact space whose every singleton is of type $G_{\delta}$, then $\mathbf{X}$ is hereditarily realcompact. We do not know if this corollary is provable in $\mathbf{ZF}$; however, we can obtain the following modifications of it in $\mathbf{ZF}$.

\begin{proposition}
\label{s6:p22}
$[\mathbf{ZF}]$
\begin{enumerate}
\item[(i)] If $\mathbf{X}$ is a realcompact space such that, for every $x\in X$, $\{x\}\in\mathcal{Z}(\mathbf{X})$, then $\mathbf{X}$ is hereditarily realcompact.
\item[(ii)] If $\mathbf{X}$ is an $\mathbb{N}$-compact space such that, for every $x\in X$, $\{x\}\in \mathcal{CO}_{\delta}(\mathbf{X})$, then $\mathbf{X}$ is hereditarily $\mathbb{N}$-compact.
\end{enumerate}
\end{proposition}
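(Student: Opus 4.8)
The plan is to reduce both statements to the punctured-space criterion furnished by Theorem \ref{s6:t14}. By the equivalence of conditions (i) and (ii) there (applied to the completely regular $T_1$-space $\mathbf{X}$ in part (i), and to the zero-dimensional $T_1$-space $\mathbf{X}$ in part (ii); note these hypotheses hold automatically, since realcompact spaces are Tychonoff and $\mathbb{N}$-compact spaces are zero-dimensional $T_1$), it suffices to verify that for every $x\in X$ the subspace $X\setminus\{x\}$ is realcompact (respectively, $\mathbb{N}$-compact). Thus the whole argument collapses to producing, point by point, a continuous map exhibiting $X\setminus\{x\}$ as a preimage of a realcompact (resp.\ $\mathbb{N}$-compact) space, and then invoking Proposition \ref{s2:p15}.

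For (ii) this is almost immediate. Fix $x\in X$. Since $\{x\}\in\mathcal{CO}_{\delta}(\mathbf{X})$, Lemma \ref{s6:l19} provides $f\in C(\mathbf{X},\mathbb{N}(\infty))$ with $f^{-1}[\{\infty\}]=\{x\}$, whence $f^{-1}[\mathbb{N}]=X\setminus\{x\}$. The subspace $\mathbb{N}$ of $\mathbb{N}(\infty)$ carries its usual discrete topology and is $\mathbb{N}$-compact (take $J=\{0\}$ in Definition \ref{s1:d2}(i)). Since $\mathbf{X}$ and $\mathbb{N}(\infty)$ are Hausdorff and $\mathbf{X}$ is $\mathbb{N}$-compact, Proposition \ref{s2:p15} (with $\mathbf{E}=\mathbb{N}$, $\mathbf{Y}=\mathbb{N}(\infty)$, $B=\mathbb{N}$) yields that $X\setminus\{x\}=f^{-1}[\mathbb{N}]$ is $\mathbb{N}$-compact, and Theorem \ref{s6:t14} then gives hereditary $\mathbb{N}$-compactness.

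For (i) the same scheme works once a map into $\mathbb{R}(\infty)$ is built by hand. Fix $x\in X$ and choose $f\in C(\mathbf{X})$ with $Z(f)=\{x\}$; replacing $f$ by $f^2$ we may assume $f\geq 0$, so $f(y)>0$ for $y\neq x$ and $f(x)=0$. Define $\phi\colon X\to\mathbb{R}(\infty)$ by $\phi(y)=1/f(y)$ for $y\neq x$ and $\phi(x)=\infty$. Continuity off $x$ is clear; at $x$ one checks that the $\phi$-preimage of a basic neighborhood $\mathbb{R}(\infty)\setminus[-M,M]$ of $\infty$ contains the open set $\{y\in X:f(y)<1/M\}\ni x$, so $\phi\in C(\mathbf{X},\mathbb{R}(\infty))$. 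Then $\phi^{-1}[\mathbb{R}]=X\setminus\{x\}$, and since $\mathbb{R}$ is a realcompact subspace of $\mathbb{R}(\infty)$, Proposition \ref{s2:p15} (with $\mathbf{E}=\mathbb{R}$) shows $X\setminus\{x\}$ is realcompact; Theorem \ref{s6:t14} finishes the proof.

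The argument is essentially bookkeeping of the earlier results, and I expect the only genuine verification to be the continuity of $\phi$ at the removed point $x$ in part (i) --- precisely the step that in part (ii) is supplied ready-made by Lemma \ref{s6:l19}. No choice principle enters at any stage, so the conclusion is a theorem of $\mathbf{ZF}$.
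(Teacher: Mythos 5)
Your proof is correct and follows essentially the same route as the paper's: part (ii) is verbatim the paper's argument (Lemma \ref{s6:l19}, then Proposition \ref{s2:p15} applied to $f^{-1}[\mathbb{N}]$, then Theorem \ref{s6:t14}), while for part (i) the paper merely cites the proof of Corollary 8.15 in Gillman--Jerison, which amounts to observing that $X\setminus\{x\}$ is the cozero-set $f^{-1}[\mathbb{R}\setminus\{0\}]$ of a realcompact space. Your explicit construction of $\phi=1/f$ as a map into $\mathbb{R}(\infty)$ is a correct, choice-free rendering of that omitted step and is interchangeable with it.
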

\begin{proof}
We omit the proof of (i) because it is the same, as the proof of Corollary 8.15 in \cite{gj}. 

(ii) Let us assume that $\mathbf{X}$ is an $\mathbb{N}$-compact space such that, for every $x\in X$, $\{x\}\in\mathcal{CO}_{\delta}(\mathbf{X})$. Fix a point $x_0\in X$. It follows from Lemma \ref{s6:l19} that there exists $f\in C(\mathbf{X}, \mathbb{N}(\infty))$ such that $\{x_0\}=f^{-1}[\{\infty\}]$. Then $X\setminus\{x_0\}=f^{-1}[\mathbb{N}]$, so the subspace $X\setminus\{x_0\}$ of $\mathbf{X}$ is $\mathbb{N}$-compact by Proposition \ref{s2:p15}. Theorem \ref{s6:t14} completes the proof.
\end{proof}

\begin{proposition}
\label{s6:p23}
$[\mathbf{ZF+CMC}]$
\begin{enumerate}
\item[(i)] If $\mathbf{X}$ is a realcompact space whose every singleton is of type $G_{\delta}$ in $\mathbf{X}$, then $\mathbf{X}$ is hereditarily realcompact.
\item[(ii)] If $\mathbf{X}$ is an $\mathbb{N}$-compact space whose every singleton is of type $G_{\delta}$ in $\mathbf{X}$, then $\mathbf{X}$ is hereditarily $\mathbb{N}$-compact.
\end{enumerate}
\end{proposition}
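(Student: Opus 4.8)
The plan is to reduce Proposition \ref{s6:p23} to Proposition \ref{s6:p22} by showing, under $\mathbf{CMC}$, that a $G_{\delta}$ singleton in a realcompact (respectively, $\mathbb{N}$-compact) space is in fact a zero-set (respectively, a $c_{\delta}$-set). The role of $\mathbf{CMC}$ is exactly that, in $\mathbf{ZF}$ alone, a $G_{\delta}$ point in a Tychonoff space need not be a zero-set, because one cannot simultaneously select the separating functions witnessing each stage of the $G_{\delta}$ representation; $\mathbf{CMC}$ repairs precisely this defect.

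For (i), I would first recall that a realcompact space is Tychonoff, being (homeomorphic to) a subspace of some $\mathbb{R}^J$. Fix $x\in X$ and, using that $\{x\}$ is $G_{\delta}$, fix a sequence $(U_n)_{n\in\omega}$ of open sets with $\{x\}=\bigcap_{n\in\omega}U_n$. For each $n$, complete regularity guarantees that the family $\mathcal{G}_n=\{f\in C(\mathbf{X},[0,1]): x\in Z(f)\subseteq U_n\}$ is non-empty. Applying $\mathbf{CMC}$ to the countable family $\{\mathcal{G}_n: n\in\omega\}$ yields a family $\{\mathcal{H}_n: n\in\omega\}$ of non-empty finite sets with $\mathcal{H}_n\subseteq\mathcal{G}_n$; putting $g_n=\max\mathcal{H}_n$ (the pointwise maximum of the finitely many functions of $\mathcal{H}_n$) gives $g_n\in C(\mathbf{X},[0,1])$ with $x\in Z(g_n)=\bigcap_{f\in\mathcal{H}_n}Z(f)\subseteq U_n$. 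Then $g=\sum_{n\in\omega}2^{-n}g_n\in C(\mathbf{X},[0,1])$, and $Z(g)=\bigcap_{n\in\omega}Z(g_n)$ satisfies $\{x\}\subseteq Z(g)\subseteq\bigcap_{n\in\omega}U_n=\{x\}$, so $\{x\}=Z(g)\in\mathcal{Z}(\mathbf{X})$. (Alternatively, since the family $\{g_n\}$ is functionally accessible, one may invoke Lemma \ref{s5:l1}(iii) for the closure of $\mathcal{Z}(\mathbf{X})$ under countable intersections.) As $x$ was arbitrary, Proposition \ref{s6:p22}(i) applies and $\mathbf{X}$ is hereditarily realcompact.

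For (ii), I would argue in the same spirit, now using that an $\mathbb{N}$-compact space is zero-dimensional $T_1$. Fix $x\in X$ and a sequence $(U_n)_{n\in\omega}$ of open sets with $\{x\}=\bigcap_{n\in\omega}U_n$. Zero-dimensionality makes each family $\mathcal{G}_n=\{C\in\mathcal{CO}(\mathbf{X}): x\in C\subseteq U_n\}$ non-empty, and $\mathbf{CMC}$ provides non-empty finite $\mathcal{H}_n\subseteq\mathcal{G}_n$; then $C_n=\bigcap\mathcal{H}_n$ is clopen with $x\in C_n\subseteq U_n$. Consequently $\{x\}=\bigcap_{n\in\omega}C_n$, which is a $c_{\delta}$-set by the very definition of $\mathcal{CO}_{\delta}(\mathbf{X})$ (here no closure lemma is needed, the countable intersection of clopen sets being a $c_{\delta}$-set directly). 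Thus every singleton lies in $\mathcal{CO}_{\delta}(\mathbf{X})$, and Proposition \ref{s6:p22}(ii) yields that $\mathbf{X}$ is hereditarily $\mathbb{N}$-compact.

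The only genuinely delicate point is the simultaneous selection of the witnessing functions (respectively, clopen sets) across the countably many indices $n$; this is the sole place where choice enters, and it is exactly where $\mathbf{CMC}$ is indispensable. Everything else is carried out explicitly in $\mathbf{ZF}$: the passage from a multiple choice function to a single $g_n$ or $C_n$ through a finite maximum or a finite intersection, and the final assembly of $g$ or of $\bigcap_{n\in\omega}C_n$.
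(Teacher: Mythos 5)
Your proof is correct and follows essentially the same route as the paper: use $\mathbf{CMC}$ to select, for each level of the $G_{\delta}$ representation, a non-empty finite family of witnesses, combine each finite family into a single zero-set (respectively, clopen set), and reduce to Proposition \ref{s6:p22}. The only cosmetic difference is in (i), where you select functions rather than zero-sets and assemble $g=\sum_n 2^{-n}g_n$ explicitly, which lets you bypass the appeal to the $\mathbf{CMC}$-dependent closure of $\mathcal{Z}(\mathbf{X})$ under countable intersections that the paper invokes at the last step.
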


\begin{proof}
Let $\mathbf{X}$ be a completely regular (respectively, zero-dimensional) $T_1$-space and let $x_0\in X$ be such that the singleton $\{x_0\}$ of type $G_{\delta}$ in $\mathbf{X}$. Let $\{G_n: n\in\omega\}$ be a family of open sets in $\mathbf{X}$ such that $\{x_0\}=\bigcap_{n\in\omega}G_n$. For every $n\in \omega$, let $\mathcal{A}_n=\{A\in\mathcal{Z}(\mathbf{X}): x_0\in A\subseteq G_n\}$ (respectively, $\mathcal{A}_n=\{A\in\mathcal{CO}(\mathbf{X}): x_0\in A\subseteq G_n\}$). By $\mathbf{CMC}$, there exists a family $\{\mathcal{C}_n: n\in\omega\}$ of non-empty finite sets such that, for every $n\in\omega$, $\mathcal{C}_n\subseteq \mathcal{A}_n$. For every $n\in\omega$, let $C_n=\bigcap\mathcal{C}_n$. Then, for every $n\in\omega$, $C_n\in\mathcal{Z}(\mathbf{X})$ (respectively, $C_n\in\mathcal{CO}(\mathbf{X})$). Clearly $\{x_0\}=\bigcap_{n\in\omega}C_n$. To complete the proof, it suffices to apply Proposition \ref{s6:p22} and Lemma \ref{s4:l1} or Lemma \ref{s5:l1}(ii).
\end{proof}

 We recall that it was proved in \cite[Theorem 3.6]{kow} that a zero-dimensional subspace of $\mathbb{R}$ may fail to be strongly zero-dimensional in $\mathbf{ZF}$. Therefore, the following theorem is non-trivial.

\begin{theorem}
\label{s6:t24}
$[\mathbf{ZF}]$
Every zero-dimensional $F_{\sigma}$-subspace of $\mathbb{R}$ has a countable base consisting of clopen sets, is strongly zero-dimensional and $\mathbb{N}$-compact.
\end{theorem}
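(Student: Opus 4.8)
The plan is to prove the three assertions in the order stated, working throughout in $\mathbf{ZF}$, and to reduce $\mathbb{N}$-compactness to the Herrlich--Chew theorem (Theorem \ref{s3:t2}). Let $\mathbf{X}$ be a zero-dimensional $T_1$-subspace of $\mathbb{R}$ with $X=\bigcup_{n\in\omega}F_n$, each $F_n$ closed in $\mathbb{R}$; replacing $F_n$ by $\bigcup_{i\le n}F_i$ I may assume $F_n\subseteq F_{n+1}$. First I would note that, since distinct points of $\mathbf{X}$ are separated by a clopen set, $\mathbf{X}$ contains no nondegenerate interval; hence each $F_n$ is a closed subset of $\mathbb{R}$ with empty interior, so $U_n:=\mathbb{R}\setminus F_n$ is open and dense and $\mathbb{R}\setminus X=\bigcap_{n\in\omega}U_n$ is a dense $G_\delta$. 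Fixing an enumeration of the rational open intervals, I would then run a choice-free Baire-category construction: inside each rational interval $I_k$ I build nested closed rational intervals $J_0\supseteq J_1\supseteq\cdots$ with $J_m\subseteq U_0\cap\cdots\cap U_m$ and lengths tending to $0$, always selecting the least admissible rational interval in the fixed enumeration, so that $\bigcap_m J_m$ is a single point $x_k\in I_k\cap(\mathbb{R}\setminus X)$. This produces, canonically and in $\mathbf{ZF}$, a countable set $D=\{x_k:k\in\omega\}\subseteq\mathbb{R}\setminus X$ that is dense in $\mathbb{R}$.

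Since each $a\in D$ satisfies $a\notin X$, every set $X\cap(a,b)$ with $a,b\in D\cup\{-\infty,+\infty\}$ is clopen in $\mathbf{X}$, and by density of $D$ the countable family $\mathcal{D}=\{X\cap(a,b):a,b\in D\cup\{-\infty,+\infty\},\ a<b\}$ is a base of $\mathbf{X}$; this settles the first assertion. For strong zero-dimensionality I first recall that a zero-dimensional $T_1$-space is completely regular, so it remains to separate disjoint zero-sets by a clopen set. Given disjoint zero-sets (in particular disjoint closed sets) $A,B$, I would read off $\mathcal{D}$ the basic clopen sets $U_0,U_1,\dots$ with $U_n\cap B=\emptyset$ and the basic clopen sets $V_0,V_1,\dots$ with $V_n\cap A=\emptyset$; these cover $A$ and $B$ respectively and together cover $X$. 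Then the clopen sets $C=\bigcup_{n}\bigl(U_n\setminus\bigcup_{k<n}V_k\bigr)$ and $\bigcup_{n}\bigl(V_n\setminus\bigcup_{k\le n}U_k\bigr)$ partition $X$, so $C$ is clopen with $A\subseteq C\subseteq X\setminus B$. All enumerations here come from the fixed enumeration of $\mathcal{D}$, so the argument is choice-free.

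For $\mathbb{N}$-compactness I would invoke Theorem \ref{s3:t2} and show that every clopen ultrafilter $\mathcal{U}$ in $\mathbf{X}$ with the countable intersection property is fixed. For $a\in D$ the complementary clopen sets $X\cap(a,+\infty)$ and $X\cap(-\infty,a)$ force a Dedekind-type cut, so I set $r=\sup\{a\in D:X\cap(a,+\infty)\in\mathcal{U}\}$ with $\sup\emptyset=-\infty$. Unless $r\in X$, a sequence in $D$ converging to $r$ from the appropriate side (using that $D$ is dense and unbounded to handle also $r=\pm\infty$) produces members $X\cap(a_n,b_n)\in\mathcal{U}$ with empty intersection, contradicting the countable intersection property; hence $r\in X$. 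A short computation using that $\mathcal{D}$ is a base then shows that every basic clopen neighbourhood of $r$ belongs to $\mathcal{U}$, whence $r\in\bigcap\mathcal{U}$ and $\mathcal{U}$ is fixed, so $\mathbf{X}$ is $\mathbb{N}$-compact by Theorem \ref{s3:t2}. The main obstacle throughout is remaining inside $\mathbf{ZF}$: both the canonical Baire construction of $D$ and the cut argument are designed to avoid the countable choice hidden in the usual ``separable metric $\Rightarrow$ Lindel\"of'' reasoning, which is precisely why I extract the point $r$ directly from the ultrafilter rather than appealing to Lindel\"ofness.
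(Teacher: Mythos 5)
Your proof is correct, and while the first two assertions follow the same path as the paper (the paper simply cites that $G_\delta$-subspaces of $\mathbb{R}$ are separable in $\mathbf{ZF}$ and that a countable clopen base yields strong zero-dimensionality, whereas you prove both inline — via a canonical least-choice Baire construction of a countable dense $D\subseteq\mathbb{R}\setminus X$, and via the standard disjointification of the two clopen covers), your argument for $\mathbb{N}$-compactness is genuinely different. The paper exploits $\sigma$-compactness: writing $X=\bigcup_{n}K_n$ with $K_n$ compact and using minimal indices in the countable base, it shows that some $K_{n_0}$ meets every member of a clopen ultrafilter $\mathcal{U}$ with the countable intersection property, and then compactness of $K_{n_0}$ fixes $\mathcal{U}$. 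You instead exploit the order of $\mathbb{R}$: each $a\in D$ splits $X$ into complementary clopen half-lines, the ultrafilter determines a Dedekind cut of $D$, and its supremum $r$ is forced into $X$ and into $\bigcap\mathcal{U}$ by the countable intersection property. Both reduce to Theorem \ref{s3:t2} and both are choice-free; your route has the mild advantage that the $F_\sigma$ hypothesis is used only to manufacture the countable dense set $D\subseteq\mathbb{R}\setminus X$, so it isolates the statement that any zero-dimensional subspace of $\mathbb{R}$ disjoint from some countable dense subset of $\mathbb{R}$ is $\mathbb{N}$-compact, which is of some interest in connection with Problem \ref{s7:1}; the paper's route, by contrast, makes the $\sigma$-compactness do the work at the final step as well. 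The only places where you are terse are the case $r=\sup L\in D$ (which needs the one-sided variant of your squeezing argument) and the verification that the least-index selections in the cut argument are definable from the fixed enumeration of $D$, but both check out.
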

\begin{proof}
Let $X$ be a non-empty $F_{\sigma}$-set in $\mathbb{R}$ such that the subspace $\mathbf{X}$ of $\mathbb{R}$ is zero-dimensional. Let $Y=\mathbb{R}\setminus Y$. It was shown in \cite[Theorem 2.1(i)]{kw2} that every $G_{\delta}$-subspace of $\mathbb{R}$ is separable in $\mathbf{ZF}$. Let $D$ be a countable dense set in $\mathbf{Y}$. Since $\inter_{\mathbb{R}}(X)=\emptyset$, it is eaasily seen that the family $\{(a, b)\cap X: a,b\in D\ \text{and }\ a<b\}$ is a countable base of clopen sets for $\mathbf{X}$. It was observed in \cite[Prroposition 3.5]{kow} that it holds in $\mathbf{ZF}$ that if a topological space has a countable base consisting of clopen sets, then it is strongly zero-dimensional. This is why $\mathbf{X}$ is strongly zero-dimensional. It remains to prove that $\mathbf{X}$ is $\mathbb{N}$-compact in $\mathbf{ZF}$.

 Let $\mathcal{U}$ be a clopen ultrafilter in $\mathbf{X}$ with the countable intersection property. Let $\{K_n: n\in\omega\}$ be a family of compact subsets of $\mathbb{R}$ such that $X=\bigcup_{n\in\omega}K_n$.  Let $\mathcal{B}=\{B_m: m\in\omega\}$ be a countable base of clopen sets for $\mathbf{X}$. We may assume that, for  every $m\in\omega$, $X\setminus B_m\in\mathcal{B}$. We may also assume that $\mathcal{B}$ is closed under finite intersections.  Suppose that, for every $n\in\omega$, the family $\mathcal{C}_n=\{C\in \mathcal{U}: C\cap K_n=\emptyset\}$ is non-empty. We fix $n\in\omega$ and $C\in\mathcal{C}_n$. Then $C$ is a countable intersection of some members of $\mathcal{B}$. Since $C\cap K_n=\emptyset$, it follows from the compactness of $K_n$ that there exists $B\in\mathcal{B}$ such that  $C\subseteq B$ and $B\cap K_n=\emptyset$. Hence, the set $A_n=\{m\in\omega: B_m\in\mathcal{U}\wedge B_m\cap K_n=\emptyset\}$ is non-empty.  We can define in $\mathbf{ZF}$ the number $m(n)=\min  A_n$ . Then we obtain the family $\{B_{m(n)}: n\in\omega\}$ of members of $\mathcal{U}$ such that $\bigcap_{n\in\omega} B_{m(n)}=\emptyset$. This contradicts the countable intersection property of $\mathcal{U}$. The contradiction obtained shows that there exists $n_0\in\omega$ such that $\mathcal{C}_{n_0}=\emptyset$. By the compactness of $K_{n_0}$, $K_{n_0}\cap\bigcap\mathcal{U}\neq\emptyset$. This, together with Theorem \ref{s3:t2}, shows that $\mathbf{X}$ is $\mathbb{N}$-compact.
\end{proof}

\section{The list of open problems}
\label{s7}

\begin{problem}
\label{s7:1}
 May a zero-dimensional subspace of $\mathbb{R}$ fail to be $\mathbb{N}$-compact in a model of $\mathbf{ZF}$? (See Theorem \ref{s6:t24}.)
\end{problem}
\begin{problem}
\label{s7:2}
Is it provable in $\mathbf{ZF}$ that if every $c_{\delta}$-ultrafilter with the countable intersection property in a zero-dimensional space $\mathbf{X}$ is fixed, then $\mathbf{X}$ is $\mathbb{N}$-compact? (See Theorem \ref{s4:t2}.)
\end{problem}
\begin{problem}
\label{s7:3}
 Is it provable in $\mathbf{ZF}$ that if every $z$-ultrafilter with the countable intersection property in a Tychonoff space $\mathbf{X}$ is fixed, then $\mathbf{X}$ is realcompact? (See Theorem \ref{s5:t3} and Corollary \ref{s5:c4}.)
\end{problem}
\begin{problem}
\label{s7:4}
Is it provable in $\mathbf{ZF}$ that every Tychonoff Lindel\"of space is realcompact? (See Theorem \ref{s6:t2}.)
\end{problem}
\begin{problem}
\label{s7:5}
 Is it provable in $\mathbf{ZF}$ that every strongly zero-dimensional realcompact space is $\mathbb{N}$-compact? (See Theorem \ref{s6:t5}.)
 \end{problem}
\begin{problem}
\label{s7:6}
Is it provable in $\mathbf{ZF}$ that if a Tychonoff space $\mathbf{X}$ is a countable union of realcompat $z$-embedded subspaces, then $\mathbf{X}$ is realcompact? (See Theorem \ref{s6:t6}.)
\end{problem}
\begin{problem}
\label{s7:7}
Is it provable in $\mathbf{ZF}$ that if a zero-dimensional $T_1$-space is a countable union of $c_{\delta}$-embedded $\mathbb{N}$-compact subspaces, then $\mathbf{X}$ is $\mathbb{N}$-compact? (See Theorem \ref{s6:t10}.)
\end{problem}
\begin{problem}
\label{s7:8}
Is it provable in $\mathbf{ZF}$ that if a zero-dimensional $T_1$-space is a countable union of $\mathbf{2}$-embedded $\mathbb{N}$-compact subspaces, then $\mathbf{X}$ is $\mathbb{N}$-compact? (See Corollary \ref{s6:c11}.)
\end{problem}
\begin{problem}
\label{s7:9}
Is Proposition \ref{s6:p23} provable in $\mathbf{ZF}$?
\end{problem}

\section*{Acknowledgements}

The first author declares that the research funding for this project was provided by the Institute for Research in Fundamental Sciences (IPM) Grants Committee (Award No. 1403030024).

\end{document}